\newlength{\lw}\setlength{\lw}{0.4pt}
\newlength{\st}\setlength{\st}{0pt}
\newlength{\qs}\setlength{\qs}{1.5em}
\newlength{\ndL}\setlength{\ndL}{6em}
\tikzstyle{every picture}+=[auto]
\tikzstyle{every picture}+=[bend angle=10]
\tikzstyle{every picture}+=[join=round]
\tikzstyle{every picture}+=[cap=butt]
\tikzstyle{every picture}+=[line width=\lw]
\tikzstyle{every picture}+=[double distance=2\lw]
\tikzstyle{every picture}+=[shorten >=\st]
\tikzstyle{every picture}+=[node distance=\ndL]
\tikzstyle{every loop}=[->,shorten >=\st]
\tikzstyle{placeL}=[circle,draw,minimum size=2\qs]
\tikzstyle{transitionL}=[rectangle,draw,minimum width=3\qs,minimum height=2\qs]
\tikzstyle{invisible}=[draw=none,inner sep=0pt,minimum height=0pt]
\tikzstyle{startplaceL}=[semicircle,draw,minimum size=\qs]
\tikzstyle{endplaceL}=[semicircle,draw,minimum size=\qs,shape border rotate=180]
\newenvironment{petrinet}[1][]{\begin{center}\begin{tikzpicture}}{\end{tikzpicture}\end{center}}
\newcommand{\NN}{{\mathbb{N}}}
\newcommand{\QQ}{{\mathbb{Q}}}
\newcommand{\ZZ}{{\mathbb{Z}}}
\newcommand{\ba}{ {\boldsymbol a} }
\newcommand{\bb}{ {\boldsymbol b} }
\newcommand{\bx}{ {\underline{x}} } 
\newcommand{\bX}{ {\underline{X}} }
\newcommand{\bT}{ {\underline{T}} }
\newcommand{\bs}{ {\underline{s}} }
\newcommand{\cB}{{\mathcal{B}}}
\newcommand{\cD}{{\mathcal{D}}}
\newcommand{\cO}{{\mathcal{O}}}
\newcommand{\fP}{{\mathfrak P}}
\newcommand{\fM}{{\mathfrak M}}
\newcommand{\fm}{{\mathfrak m}}
\newcommand{\fn}{{\mathfrak n}}
\newcommand{\embdim}{\mathrm{emb.dim}} 
\newcommand{\Ord}[1]{\mathcal{V}_{#1}} 
\newcommand{\OOrd}[1]{\mathcal{V}^O_{#1}} 
\newcommand{\maxOrd}{{\mathrm{max}\mbox{-}\ord}} 
\newcommand{\maxNu}{{\mathrm{max}\mbox{-}\nu}} 
\newcommand{\OmaxNu}{{\mathrm{max}\mbox{-}\nu^O}} 
\newcommand{\MaxOrd}{{\mathrm{Max}\mbox{-}\ord }} 
\newcommand{\MaxNu}{{\mathrm{Max}\mbox{-}\nu }} 
\newcommand{\OMaxNu}{{\mathrm{Max}\mbox{-}\nu^O}} 
\newcommand{\Max}{\mathrm{Max}} 
\newcommand{\Ini}{{\mathrm{In}}} 
\newcommand{\ini}{{\mathrm{in}}} 
\newcommand{\gr}{{\mathrm{gr}}} 
\newcommand{\ord}{{\mathrm{ord}}} 
\newcommand{\nut}{{\nu_{\sf ref}}} 
\newcommand{\Onut}{{\nu_{\sf ref}^O}} 
\newcommand{\Bl}{{B\ell}} 
\newcommand{\Spec}{{\mathrm{Spec}}}
\newcommand{\Sing}{{\mathrm{Sing}}}
\newcommand{\good}{{weakly permissible }}
\theoremstyle{plain}
\newtheorem{Thm}{Theorem}[section]
\newtheorem{Lem}[Thm]{Lemma}
\newtheorem{Prop}[Thm]{Proposition}
\theoremstyle{definition}
\newtheorem{Def}[Thm]{Definition}
\newtheorem{Rk}[Thm]{Remark}
\newtheorem{Ex}[Thm]{Example}
\newtheorem{Not_num}[Thm]{Notation}
\newtheorem{Obs}[Thm]{Observation}
\newtheorem{Construction}[Thm]{Construction}
\newcommand{\xch}[1]{{\color{black} #1}}
\newcommand{\farbe}{black}
\numberwithin{equation}{section}
\title[Embedded Desingularization for arithmetic surfaces]
{
	Embedded Desingularization for arithmetic surfaces -- toward a parallel implementation
}
\author{Anne Fr\"uhbis-Kr\"uger}
\address{A.FK., Institut f\"ur Algebraische Geometrie,
	Leibniz Universit\"at Hannover,
	Welfengarten 1,
	30167 Hannover,
	Germany}
\curraddr{Institut f\"ur Mathematik, Carl von Ossietzky Universit\"at Oldenburg, 26111 Oldenburg, Germany}
\email{anne.fruehbis-krueger@uni-oldenburg.de}
\author{Lukas Ristau}
\address{L.R., Competence Center High Performance Computing,
	Fraunhofer Institute for Industrial Mathematics ITWM,
	Fraunhofer-Platz 1,
	67663 Kaiserslautern,
	Germany}
\curraddr{Department of Mathematics,
	University of Kaiserslautern,
	Erwin-Schr\"odinger-Str.,
	67663 Kaiserslautern,
	Germany}
\email{ristau@mathematik.uni-kl.de}
\thanks{\xch{L.R.~was partially supported by DFG-grant SFB-TRR 195 ``Symbolic Tools in Mathematics and their Application'', project II.5 ``{\sc Singular}: A new level of abstraction and performance''}}
\author{Bernd Schober}
\address{B.S., Institut f\"ur Algebraische Geometrie,
	Leibniz Universit\"at Hannover,
	Welfengarten 1,
	30167 Hannover,
	Germany}
\curraddr{Institut f\"ur Mathematik, Carl von Ossietzky Universit\"at Oldenburg, 26111 Oldenburg, Germany}
\email{bernd.schober@uni-oldenburg.de}
\subjclass[2010]{14E15 (primary), 14B05, 14J17, 13P99 (secondary)}
\begin{document}

\begin{abstract}	
We present an algorithmic embedded desingularization of 
arithmetic surfaces bearing in mind implementability.
Our algorithm is based on work by Cossart-Jannsen-Saito, 
though our variant uses a refinement of the order 
instead of the Hilbert-Samuel function as a measure for the complexity of 
the singularity.
We particularly focus on aspects arising when working in mixed characteristics. 
Furthermore, we exploit the algorithm's natural 
parallel structure rephrasing it in terms of Petri nets for use in the 
parallelization environment GPI-Space with {\sc Singular} as computational 
back-end.
\end{abstract}

\maketitle

\label{lukas}

\section{Introduction}

Resolution of singularities in all its facets has captured the attention of 
many algebraic geometers since the end of the 19th century. Hironaka's 
monumental work \cite{Hiro64} solved the long standing problem of 
desingularization in characteristic zero in arbitrary dimension in 1964, 
whereas the problem is still wide open in positive and mixed characteristic
in higher dimension. With the aim to device a viable approach in positive
characteristic, Hironaka's characteristic zero approach has been analyzed and 
rephrased over the decades.
Starting with the first constructive formulations,
(see in particular \cite{BM89}, \cite{OrlandoConstr}) 
and even more 
algorithmic simplified approaches (as for example \cite{BEV,EHa,Jarek})
to
the point that nowadays there are accessible, concise introductions to the subject
available (like \cite{Cut}, \cite{Kol}). 
Around the turn of the century the
algorithmization of resolution of singularities had reached a level which
allowed first prototype implementations \cite{BSch}, \cite{FKP}, 
of which
the latter was sufficiently efficient to also permit various applications 
\cite{ResZETA}, \cite{FKPnato}, \cite{FKzeta}.

In positive characteristic, there exist only results in small dimensions and in very special cases \cite{BMbinomial}, \cite{Blanco1}, \cite{Blanco2}, \cite{KollarToroidal}, \cite{BerndPartial}.
While the surface case has been
treated in many different ways \cite{A}, \cite{AngOrlDim2}, \cite{CGO}, \cite{CJS}, \cite{InvDim2}, \cite{CutSurf}, \cite{HiroBowdoin}, \cite{IFPdim2}, \cite{Lip}, it was not until the
end of the last decade that the dimension 3 case (non-embedded) was solved in full 
generality \cite{CP1}, \cite{CP2}, \cite{CPsmallmulti}, \cite{CPmixed}. 
In mixed characteristic, in particular the 
approaches of Lipman \cite{Lip} and Cossart-Jannsen-Saito 
\cite{CJS} exemplify two different ways to deal with the task: using
a combination of normalization and blow-up steps in the first case and relying
solely on blowing up at suitably chosen centers in the second. In Lipman's
approach the normalization in mixed characteristic proved to be a bottleneck 
in practice \cite{ResLIB}; the Cossart-Jannsen-Saito (CJS) 
algorithm, on the other hand, provides the structural advantage of only using 
a single kind of birational morphism.

The motivation behind our study of the CJS-algorithm is threefold: first, we
need a sufficiently powerful tool for desingularization in this case to 
enter a more structured, experiment-driven approach to the comparison of 
desingularization and valuation theoretic approach started in 
\cite{FKW}; second, key parts of the algorithm to determine the center
can also be used for an alternative approach to normalization; last, but not
least, the quest for a solution to the desingularization problem in higher
dimension makes it tempting to experimentally search for cases where a 
straight-forward generalization of an existing algorithm for surfaces to 
higher dimensions reveals new interesting phenomena. Section 6 of this article provides some more details
on the last two motivations.

\smallskip 

The goal of this article is to introduce a variant of the approach of 
Cossart, Jannsen and Saito to embedded desingularization in dimension $2$,
which is sufficiently explicit to allow implementation. The underlying ideas 
of the CJS-algorithm can be traced back to Hironaka's Bowdoin lectures 
\cite{HiroBowdoin} for hypersurfaces, for which Cossart later closed a gap in 
\cite{CosToho}. 
To further outline this approach in slightly more detail,
we first need to fix the setting and notation for embedded resolution in
mixed characteristic:  Let $ X $ be a reduced excellent 
Noetherian scheme of dimension two, embedded in some excellent regular 
scheme\footnote{In fact, if one is only interested in understanding the 
	local situation, it is always possible to reduce all consideration to the 
	embedded setting, see \cite{CJS} Remark \xch{5.22}.} 
$ Z $. Note that we do not fix a base field, explicitly allowing the case of a
scheme over a Dedekind ring. We may assume without loss of generality that 
$ Z $ is connected (and hence that $ Z $ is irreducible) since $ Z $ is 
regular and we may safely treat each component separately. Throughout the 
article, we fix 
\[
N := \dim (Z) .
\]
In contrast to CJS, we avoid the use of the Hilbert-Samuel function  
$ H_X ( x) $ (loc.~cit. Definition~1.28) as measure for the complexity 
of the singularity of $ X $ at $ x $ and pursue the idea of using the order
of $I(X)$ at $x$ instead. More precisely, we introduce a refined order 
$ \nut(x) $ (Definition~\ref{Def:nut}), for which the computation of the
maximal locus is not as expensive as for the Hilbert-Samuel function.
In Example \ref{Ex:diff} we show that this leads to a different resolution 
process. The idea of replacing the Hilbert-Samuel function by the order is
by no means new, as it can e.g., be found in \cite{BEV}. 
More
recently, also the multiplicity has been used as a replacement for the 
Hilbert-Samuel function in \cite{AnaOrlandoClay}, but we will not follow that 
train of thought here.

\smallskip 

The article is structured to first cover the theoretical background, before
formulating the algorithms first in a sequential way and then as Petri-nets.
More precisely, section 2 discusses our variant of the algorithm of 
Cossart-Jannsen-Saito without assuming that the reader is already deeply 
familiar with desingularization algorithms. This section therefore also recalls
important results and constructions from \cite{CJS} and \cite{InvDim2}, and 
explains the complete algorithm illustrating important facts by examples, 
wherever necessary. In particular, it introduces the main invariant, the
refined order of an ideal, and addresses the question of termination for the 
two simplest cases, namely the 1-dimensional case and the case of 2-dimensional 
hypersurfaces.

As the main measure for the complexity of a singularity is the order of 
the corresponding ideal in our setting, the computation of the locus of maximal
order is the very heart of the algorithm. As this is not a completely trivial
task in mixed characteristic, section 3 is devoted to treating this in detail.
This section does not assume that the reader is familiar with the notion of
Hasse{-Schmidt} derivatives, which are indispensable for the computation, and 
hence also discusses them briefly before rendering the construction of the 
locus of maximal refined order completely algorithmic.

Based on the theoretical discussions in sections 2 and 3, we are then prepared
to reformulate our version of the CJS-approach in terms of sequential 
algorithms in section 4.
In here, we apply particular care to the identification of bad
primes or more precisely potentially bad primes, which we refer to as 
interesting primes 
(i.e., prime numbers above which there possibly lies a component of the maximal locus). 
At this point, it is also important to note that we 
distinguish between horizontal and vertical components in the maximal locus.
The former are components that can be seen at every prime, while each vertical component is associated to a single particular prime $ p $ and is not seen above any prime $ q \neq p $.
In contrast to \cite{CJS}, we give precedence to the horizontal components 
and consider the vertical ones only when no more horizontal components are
left.%
\footnote{The idea to try to resolve first all horizontal components 
	and then to handle the vertical components has been pointed out by 
	H.~Hironaka during a private {conversation} at a conference in 2006 at the ICTP
	in Trieste, Italy.}

Given the sequential algorithms and the naturally parallel structure due to 
the use of coverings by charts at different points in the algorithms, we
rephrase the algorithms in terms of Petri-nets in section 5 to make them
accessible to the parallel workflow management system GPI-Space. As we also
do not assume familiarity of the reader with the formalism of Petri nets, we
give a very brief overview at the beginning of the section, before stating the
nets which represent the main parts of the algorithms.

The last section is then devoted to an outlook which details the possible
applications already mentioned as motivations for this article.

\smallskip

\noindent 
{\em Acknowledgments:}
The authors would like to thank Vincent Cossart, Gerhard Pfister, Mirko Rahn, 
and Stefan Wewers for fruitful exchange on topics touching upon the context of 
this article.
{Further, they thank Sabrina Gaube for numerous helpful comments on an earlier draft.}
\xch{\label{xch:thanks}They are grateful to the anonymous referees for useful comments and questions.}

\smallskip 

\noindent 
{\em Notational Remark:} 
throughout the article, $ \NN= \{ 0, 1, 2, \ldots \} $ and we use multi-index notation, i.e., for $ (\bx)  = (x_1, \ldots, x_m) $
and $ \ba = (a_1, \ldots, a_m) \in \NN^m $, we abbreviate 
$
\bx^\ba = x_1^{a_1} \cdots x_m^{a_m} . 
$
{We often apply abuse of notation and write
	$ V(J) := \Spec(A[\bx]/J) $, for an ideal $ J \subset A[\bx] $ and a principal ideal domain $ A $ (e.g., $ A = \ZZ $).}

\section{A variant of the algorithm of Cossart, Jannsen and Saito}

We present a variant of the algorithm by Cossart, Jannsen, Saito \cite{CJS}.
In contrast to their approach, we do not consider the Hilbert-Samuel function 
$ H_X : X \to \NN^\NN $
as measure for the complexity of the singularities, but a refinement 
$ \nut : X \to \NN^{2} $ of the order (Definition~\ref{Def:nut}).
The benefit is that with our modification the implementation of
the algorithm runs in reasonable time, whereas the computation of 
the Hilbert-Samuel locus is far more expensive.
(cf.~ \cite{RaschkeDipl}).
Furthermore, we make a distinction between the \textit{vertical} components 
(i.e., those lying completely in one fiber above some prime $ p \in \ZZ$)
in the maximal locus of $ \nut $ and the \textit{horizontal} ones 
(which appear in every fiber). 

Let us point out that in this section we do not make use of the 
additional structure imposed on $ Z $.
Hence the constructed resolution procedure can be applied for any
excellent irreducible regular scheme $ Z $.

From the viewpoint of resolution of singularities, a map 
$ \mu : X \to S $ into a totally\footnote{In fact, it is sufficient if the set is 
	partially ordered if we require that $ \mu $ can only achieve finitely 
	many maximal values on $ X $, cf.~\cite{CJS} Lemma \xch{2.34}(c) 
	and the preceding paragraph.
	For example, this appears if the Hilbert-Samuel function is used.} 
ordered set $ (S, \leq ) $
has to fulfill the following properties in order to be a decent 
measure for the complexity of a singularity and its improvement during the resolution process:

\begin{enumerate}
	\item[\bf (A)] $ \mu $ {\em distinguishes singular and regular points}.
	\item[\bf (B)] $ \mu $ is {\em (Zariski) upper semi-continuous}: 
	the sets 
	$ X_\mu (\geq s) := \{ x \in X \mid \mu(x) \geq s \} $ are closed,
	for every $ s \in S $. 
	\item[\bf (C)] $ \mu $ is {\em infinitesimally upper semi-continuous}:
	$ \mu $ does not increase %
	if we blow up a regular center $ D $ contained in the locus, 
	where $ \mu $ is maximal. 
	\item[\bf (D)] $ \mu $ can only {\em decrease finitely many times} until all points are regular.
\end{enumerate}

Therefore the order itself is not an appropriate measure since {(A)} 
fails to hold, in general, see Example~\ref{Ex:order_not_good}.
In our setting, we shall use $ S = \NN^{2} $ equipped with the lexicographical 
order. Hence, for us, {(C)} implies {(D)}.

Recall the following criterion for upper semi-continuity:

\begin{Lem}[\cite{CJS} Lemma \xch{2.34}(a)]
	\label{Lem:CJS_usc}
	A map $ \mu : X \to S $, with $ (S , \leq ) $ a totally ordered set, is upper semi-continuous if and only if the following holds:
	\begin{enumerate}
		\item 
		If $ x, y \in X $ with $ x \in \overline{ \{ y \} } $, 
		then $ \mu(x) \geq \mu(y) $.
		\item 
		For all $ y \in X $ there is a dense open subset
		$ U \subset \overline{ \{ y \}} $ 
		such that $ \mu(x) = \mu(y) $ for all $ x \in U $.
	\end{enumerate}
\end{Lem}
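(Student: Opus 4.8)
The plan is to prove both implications of Lemma~\ref{Lem:CJS_usc} directly from the definition of upper semi-continuity, namely that $X_\mu(\geq s)$ is closed for every $s \in S$, using the fact that a set in a Noetherian (or merely sober) scheme is closed if and only if it is stable under specialization and is a finite union of irreducible closed sets — or, more elementarily, by reducing to the behaviour of $\mu$ on the generic points of irreducible closed subsets.

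\smallskip

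\textbf{($\Rightarrow$).}
Assume $\mu$ is upper semi-continuous. For (1), let $x \in \overline{\{y\}}$ and set $s := \mu(y)$. Then $y \in X_\mu(\geq s)$, which is closed by hypothesis, so $\overline{\{y\}} \subseteq X_\mu(\geq s)$ and in particular $x \in X_\mu(\geq s)$, i.e.\ $\mu(x) \geq s = \mu(y)$. For (2), fix $y \in X$ and let $Y := \overline{\{y\}}$. By (1) every $x \in Y$ satisfies $\mu(x) \geq \mu(y)$, so it suffices to produce a dense open $U \subseteq Y$ on which $\mu \leq \mu(y)$. I would argue that $\mu$ achieves only finitely many values on $Y$ that are $> \mu(y)$ — here one uses that $S$ is totally ordered together with the Noetherian hypothesis on $X$: the closed sets $Y \cap X_\mu(\geq s)$ for $s > \mu(y)$ form a family of closed subsets of $Y$, and since $Y$ is irreducible and $y \notin X_\mu(\geq s)$ for $s > \mu(y)$, each such set is a proper closed subset of $Y$, hence nowhere dense. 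The union $\bigcup_{s > \mu(y)} (Y \cap X_\mu(\geq s))$ equals $\{x \in Y : \mu(x) > \mu(y)\}$; by Noetherianity this union stabilises to a finite union of proper closed subsets of $Y$, which is therefore a proper closed subset $C \subsetneq Y$. Then $U := Y \setminus C$ is dense open in $Y$ and $\mu(x) = \mu(y)$ for all $x \in U$.

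\smallskip

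\textbf{($\Leftarrow$).}
Assume (1) and (2); I must show $X_\mu(\geq s)$ is closed for each $s \in S$. First, (1) shows $X_\mu(\geq s)$ is stable under specialization: if $y \in X_\mu(\geq s)$ and $x \in \overline{\{y\}}$ then $\mu(x) \geq \mu(y) \geq s$. For a Noetherian scheme, a specialization-stable set need not be closed, so one cannot stop here; instead I would run a Noetherian induction on closed subsets of $X$. Concretely, let $Z \subseteq X$ be a closed subset, minimal among closed subsets for which $Z \cap X_\mu(\geq s)$ is \emph{not} closed (assume for contradiction one exists). Then $Z$ is irreducible — otherwise write $Z = Z_1 \cup Z_2$ with $Z_i \subsetneq Z$ closed, and $Z \cap X_\mu(\geq s) = \bigcup_i (Z_i \cap X_\mu(\geq s))$ would be a finite union of closed sets by minimality. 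Let $\eta$ be the generic point of $Z$. If $\mu(\eta) \geq s$, then by (1) all of $Z = \overline{\{\eta\}}$ lies in $X_\mu(\geq s)$, so the intersection is $Z$, closed — contradiction. If $\mu(\eta) \not\geq s$, i.e.\ $\mu(\eta) < s$ (using total order), apply (2) to $y = \eta$: there is a dense open $U \subseteq Z$ with $\mu(x) = \mu(\eta) < s$ for all $x \in U$, so $U \cap X_\mu(\geq s) = \varnothing$ and hence $Z \cap X_\mu(\geq s) \subseteq Z \setminus U =: Z'$, a proper closed subset of $Z$. By minimality of $Z$, the set $Z' \cap X_\mu(\geq s)$ is closed; but $Z \cap X_\mu(\geq s) = Z' \cap X_\mu(\geq s)$ since the left side is already contained in $Z'$. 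This contradicts the choice of $Z$. Taking $Z = X$ yields that $X_\mu(\geq s)$ is closed.

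\smallskip

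The main obstacle is the passage from ``stable under specialization'' to ``closed'' in the ($\Leftarrow$) direction; this is exactly where the Noetherian hypothesis on $X$ (which lets one do induction on closed subsets and reduce to generic points) and the total ordering on $S$ (which makes ``$\mu(\eta) \not\geq s$'' mean ``$\mu(\eta) < s$'', so that condition (2) applies with a useful conclusion) are both genuinely needed. Everything else is a formal manipulation of closed sets and generic points. Since this is verbatim \cite{CJS} Lemma~1.34(a), one could alternatively simply cite it; I have sketched the argument for completeness.
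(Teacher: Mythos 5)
The paper does not give its own proof of this lemma; it is quoted verbatim from \cite{CJS} Lemma 1.34(a) as an external result. So there is no in-paper proof to compare against, and I will instead assess the argument on its own merits.

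Your backward direction ($\Leftarrow$) via Noetherian induction is correct: the minimality argument forces $Z$ irreducible, and the two cases at the generic point $\eta$ (either $\mu(\eta)\geq s$, giving $Z\cap X_\mu(\geq s)=Z$ closed, or $\mu(\eta)<s$, in which case (2) forces $Z\cap X_\mu(\geq s)$ into a proper closed $Z'$ and minimality applies) cover everything. The use of Noetherianity there is legitimate: the DCC on closed subsets is exactly what supplies a minimal bad closed set.

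There is, however, a genuine gap in the forward direction, specifically in your proof of (2). You claim that \emph{``by Noetherianity this union stabilises to a finite union of proper closed subsets of $Y$.''} This is not a consequence of Noetherianity. Noetherian means DCC on closed subsets (equivalently ACC on opens), and the family $Y\cap X_\mu(\geq s)$ with $s>\mu(y)$ is an \emph{ascending} chain of closed sets as $s$ decreases toward $\mu(y)$; ascending chains of closed sets in a Noetherian space need not stabilise (e.g.\ $\{0\}\subsetneq\{0,1\}\subsetneq\cdots$ in $\mathbb{A}^1_k$). Indeed, the claim ``$\mu$ achieves only finitely many values $>\mu(y)$ on $Y$'' can fail for a $\mu$ satisfying the definition (B) of upper semi-continuity used in the paper: on $X=\mathbb{A}^1_{\mathbb{Q}}$ with $S=[0,1]\cap\mathbb{Q}$, set $\mu=0$ at the generic point and at all closed points except $\mu\bigl((t-n)\bigr)=1/n$ for $n\in\mathbb{Z}_{>0}$. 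Then each $X_\mu(\geq s)$ with $s>0$ is a finite set of closed points, hence closed, so $\mu$ is upper semi-continuous in the sense of (B); but there is no dense open on which $\mu$ is constantly $0$. Thus (2) fails, and the equivalence asserted by the lemma does not hold without an extra finiteness hypothesis. What is actually needed is something like the descending chain condition on $S$ (so that $\{s\in S: s>\mu(y)\}$ has a minimum $s^+$, forcing $\{x\in Y:\mu(x)>\mu(y)\}=Y\cap X_\mu(\geq s^+)$ to be a single proper closed set), or an assumption that $\mu$ takes only finitely many values on each irreducible closed subset. In the paper's application the target is $\mathbb{N}^2$ (resp.\ $\mathbb{N}^3$) with lexicographic order, which does satisfy DCC, so the lemma is used only in a safe regime; but your proof of (2) as written does not establish the claimed stabilisation and would need to invoke such a hypothesis explicitly.
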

Since we assume that $ \mu $ cannot increase under suitable blowing ups and that $ \mu $ can only
decrease finitely many times until the strict transform of $ X $ 
is regular, it is sufficient to construct a finite sequence of blowing ups such that the maximal value achieved by $ \mu $ decreases strictly.  
This implies then embedded resolution of singularities for $ X \subset Z $.
More precisely, we need to answer the following question:
Given $ X \subset Z $, can we find a finite sequence of blowing ups,
\begin{equation}
\label{eq:resol_seq}
\begin{array}{rlcrlcccrl}
Z =: & Z_0 & \longleftarrow & \Bl_{D_0} (Z_0) =: & Z_1 & \longleftarrow & 
\cdots & \longleftarrow & \Bl_{D_{a-1}} (Z_{a-1}) =: &  Z_a \\[5pt]
&\bigcup	&	& & \bigcup	&	& & &  &		\bigcup	\\[5pt]
X =:  & X_0	& \longleftarrow	& & X_1	& \longleftarrow 	& \cdots & \longleftarrow  &  &		X_a	, 
\end{array}
\end{equation}
in centers $ D_i $ contained in the locus, where $ \mu $ is maximal on $ X_i $,
$ 0 \leq i \leq a -1$,
($ X_i $ denotes the strict transform of $ X $ in $ Z_i $) such that eventually
\[
\max \{ \mu(x_a) \mid x_a \in X_a \}
<
\max \{ \mu(x) \mid x \in X \}
?
\]

As a first step we introduce our refinement $ \nut $ of the order and 
then show that the above properties hold for $ \nut $
in order to justify its use.

\begin{Def}
	Let $ x \in  X $ (not necessarily closed).
	We denote by $ ( R = \cO_{Z,x}, \mathfrak{m}, k = R/\mathfrak{m} ) $ the local ring of $ Z $ at $ x $ 
	(which is excellent and regular) and
	by $ J \subset R $ the ideal which defines $ X $ locally at $ x $.
	\begin{enumerate}
		\item 
		The {\em order of $ X $ at $ x $} is defined as the 
		order of $ J $ at $ \mathfrak{m} $,
		\[
		\ord_x (X) := \ord_\mathfrak{m}(J) := \sup \{ t \in \NN \mid J \subset \mathfrak{m}^t \}
		\]
		For an element $ f \in J $, we also abbreviate 
		$ d(f) := \ord_\mathfrak{m}(f) $.
		\item 
		The {\em maximal order of $ X $} is defined as
		\[
				\maxOrd(X) := \sup\{ \ord_x(X) \mid x \in X \}
		\]
		and the {\em maximal order locus of $ X $} is the locus of points of order $ \maxOrd(X) $,
		\[
		\MaxOrd(X) := \{ x \in X \mid \ord_x(X) = \maxOrd(X) \}.
		\]
		\item 
		The {\em initial form of $ f $ (with respect to $ \mathfrak{m} $)} is defined as
		\[
		\ini_\mathfrak{m}(f) := f \mod \mathfrak{m}^{d(f) +1} \in \gr_\mathfrak{m} (R),
		\]
		where $ \gr_\mathfrak{m}(R) = \bigoplus_{t \geq 0} \mathfrak{m}^t/\mathfrak{m}^{t+1} $ denotes the 
		{\em associated graded ring of $ R $ at $ \mathfrak{m} $}.
		\item 
		The {\em initial ideal of $ J $ at $ \mathfrak{m} $} is defined as the 
		ideal $  \Ini_\mathfrak{m} (J) $ in $ \gr_\mathfrak{m}(R) $ generated by the initial forms 
		of the elements in $ J $,
		\[
		\Ini_\mathfrak{m} (J) := \langle \ini_\mathfrak{m}(f) \mid f \in J \rangle.
		\]		
	\end{enumerate}
\end{Def}

Recall that 
the order is a upper semi-continuous function, 
see \cite{Hiro64} Chapter III \S 3 Corollary~1 p.~220.
Furthermore, the graded ring $ \gr_\mathfrak{m}(R) $ is isomorphic to a polynomial 
ring in $ n $ variables over the residue field $ k $
(even in mixed characteristic), where  
\[ 
n := n_x := \dim (R) .
\]
In the graded ring one of the variables may correspond to a prime number, e.g.~$ R = \ZZ[x]_{\langle 73, x \rangle} $. 
Moreover, $ \Ini_\mathfrak{m}(J) $ is a homogeneous ideal in $ \gr_\mathfrak{m}(R) $.
For $ \fM := \Ini_\mathfrak{m} (\mathfrak{m}) \subset \gr_\mathfrak{m} ( R) $, we have
\begin{equation}
\label{eq:order_graded_elementwise}
\min \{ \ord_\mathfrak{m}(f) \mid f \in J \} = \ord_\mathfrak{m} (J) 
= \ord_{\fM} ( \Ini_\mathfrak{m}(J) ). 
\end{equation}

\begin{Obs}
	\label{Obs:d_x}
	Let the situation be as in the previous definition.
	Set $ I_1 :=  \Ini_\mathfrak{m}(J) $.
	If $  \ord_{\fM} ( I_1 ) = 1 $, we can consider its image 
	$\overline{I_1}$ in the degree $1$ slice of $ {\gr}_{\mathfrak m}(R)$, 
	which yields a subspace of the finite dimensional $k$-vector space
	$ {\gr}_{\mathfrak m}(R)_1 =\mathfrak m / \mathfrak m^2 $ . 
	We can thus find a basis, say $F_1,\dots,F_a$ of $\overline{I_1}$,
	for some $ a := a_x \in \NN $. 
	Either $I_1 = \langle F_1,\dots,F_a\rangle_{gr_{\mathfrak m}(R)}$ 
	or $\ord_{\fM} (H) > 1$ for all elements $H$ in the ideal $ I_{a+1} $ generated by the set 
	$I_1 \setminus \langle F_1,\dots,F_a\rangle_{gr_{\mathfrak m}(R)}$. 
	We define
	\begin{equation}
	\label{eq:d_x} 
	d_x := 
	\left \{
	\begin{array}{cl}
	1, 	& \mbox{ if }  I_1 = \langle F_1, \ldots F_a \rangle \\[5pt]
	\ord_\fM ( I_{a+1} ) > 1,	& \mbox{ otherwise.}
	\end{array}
	\right.
	\end{equation}
	
	For each $ F_i \in \Ini_\mathfrak{m}(J) $, we can choose a lift
	$ f_i \in J $ with $ \ini_\mathfrak{m}(f_i) = F_i $, $ 1 \leq i \leq a $.
	We have $ \ord_\mathfrak{m} (f_i) = \ord_\fM (F_i) = 1 $ and
	$ (f_1,\dots,f_a) $ forms a regular sequence, 
	\xch{\label{xch:i.e}in particular},
	$ f_i \notin \langle f_1, \ldots, f_{i-1} \rangle $,
	for $ 1 \leq i \leq a $. 
	Hence $ \mathcal Y := V(f_1, \ldots f_a ) $ defines a 
	$ ( n- a )$-dimensional regular	{subscheme} of 
	$ \mathcal Z := \Spec (R) $ which contains 
	$ \mathcal X = V(J) $.
	This implies 
	\[ 
	a_x \leq n_x - \dim \mathcal X \leq N = \dim (Z).
	\] 
	The following are equivalent:
	\begin{enumerate}
		\item[(i)] 	the first inequality 
		is strict, $ a_x < n_x - \dim \mathcal X $ (i.e., $ \dim \mathcal X < \dim \mathcal Y $),
		\item[(ii)] 	$ d_x > 1 $, and
		\item[(iii)] \xch{$ X $ is not regular at $ x $.} \label{xch:not_regular_instead_singular}
	\end{enumerate}
	
	At a closed point corresponding to a maximal ideal ${\mathfrak m}_x$ the value of
	$ n_x - a_x $ coincides with the embedding dimension of $ X $ at $ x $,
	\[
	n_x - a_x = \embdim_x ( X ) .	
	\]
	However, this idea needs to be refined slightly to also allow local considerations at 
	non-maximal prime ideals. This is taken into account in the following definition. 
	The reasons for the change will become apparent in Example \ref{n-aNotEmbdim}
	directly after it.
\end{Obs}

\begin{Def}
	\label{Def:nut}
	Let $ X \subset Z $ be as above.
	\begin{enumerate}
		\item Using the previous notations, we define the map
		\[ 
		\nut := \nut_{X,Z}:  X  \to  ( \NN^2, \leq_{lex}), \ \ \ \ \ \
		\nut(x) :=  (\,N-a_x,\, d_x\,)
		\xch{\label{xch:point}.}
		\]
		We call $ \nut $ the {\em refined order on $ X $}.
		\item For $ A \in \NN^2 $, we set
		$
		\Ord{\geq A}(X) := \{ \,x \in X \mid \nut(x)  \geq A \, \}
		$.  
		We define the {\em maximal refined order of $ X $} by
		\[
		\maxNu(X) : = (\alpha, \delta ) := \max \{ \,\nut(x) \mid x \in X \, \}
		\]
		and the {\em maximal refined order locus of $ X $} by
		\[ 
		\MaxNu ( X ) :=
		\left\{ 
		\begin{array}{cl}
		\Ord{\geq \maxNu(X)}(X) ,
		& \mbox{if } \delta > 1 ,\\[3pt]
		X, & \mbox{if } \delta = 1.
		\end{array}
		\right.  
		\]
		\item 	A closed subscheme $ D $ 
		(not necessarily irreducible)
		is called a {\em \good center} if it is regular and contained in the maximal refined order locus of $ X $. 
		We call the blowing up with such a center $ D $ a 
		{\em \good blowing up}.
	\end{enumerate}
\end{Def}

As stated above, $ \nut $ depends on $ X $ and $ Z $.
In order to keep formulas simple, we will only mention the 
dependence when it is not clear from the context.

The following example illustrates, why we are using $ N - a $ and not $ n - a $ in the definition of $ \nut $. The difference is subtle, but crucial for further reasoning.

\begin{Ex}[Whitney umbrella] \label{n-aNotEmbdim}
	Let $ k $ be an infinite field and 
	$ X \subset Z := \mathbb{A}^3_k = \Spec (k  [t,y,z] ) $ be the
	hypersurface defined by the polynomial $ f = t^2 - y^2 z $.
	Its singular locus is the curve $ C := V ( t,y ) $, whose generic point
	we denote by $ \eta$. By $ x $ we denote the closed point $ V ( t,y,z) $.
	
	We have $ a_x = a_\eta = 0  $ and $ d_x = d_\eta = 2 $ coincides
	with the order of $ f $ at the respective point.
	Therefore $ \nut ( x) = \nut (\eta ) = (3,2) $
	since $ N = \dim ( Z ) = 3 $,  
	and $ \MaxNu(X) = V(t,y) $.
	
	On the other hand, if we used $\nu_{hyp}= (n_x - a_x, d_x) $ instead of $\nut$
	we would see the following: $ n_x = \dim (\mathcal{O}_{Z,x} ) = 3 $
	and $ n_\eta = \dim (\mathcal{O}_{Z,\eta} ) = 2  $, since $z$ becomes an
	element of the residue field at $\eta$ and we see that
	$\operatorname{trdeg}((\mathcal{O}_{Z,\eta}/\mathfrak{m}_{Z,\eta}) : 
	(\mathcal{O}_{Z,x}/\mathfrak{m}_{Z,x})) = 1$ .
	Hence we have 
	$ n_x - a_x = 3 > 2 = n_\eta - a_\eta $, i.e., $\nu_{hyp}(x) =(3,2) >
	(2,2) = \nu_{hyp}(\eta)$.
	In fact, this computation holds for any closed point $ x' $ 
	contained in $ C $ instead of $ x $. 
	Since $ k $ is infinite, there are infinitely many closed points on $ C $
	and the maximal locus of $ \nu_{hyp} $ is not closed. 
	
	Another interesting aspect about this example is the following:
	if we blow up the closed point $ x $ then the strict transform
	of $ f $ 
	at the point with parameters $ (t',y',z') = (\frac{t}{z}, \frac{y}{z}, z)$
	is given by $ f' = t'^2 - {y'}^2 z' $ and we have thus encountered a loop.
\end{Ex}

\begin{Rk} \phantomsection\label{rem_nut_is_nu_trunc}
	\begin{enumerate}
		\item 	The reader familiar with the notion of Hironaka's 
		$ \nu^*$-invariant (\cite{Hiro64} Chapter III Definition 1, p.~205, see also \cite{CJS} Definition \xch{2.17}) will recognize a connection.
		Observation~\ref{Obs:d_x} is precisely the beginning 
		of a possible way to construct a so called standard basis for $ J $ 
		and thus to determine $ \nu^*(J, R) $, see \cite{CJS} the paragraph before Remark \xch{2.5}.
		Set $ I := \Ini_\mathfrak{m} (J) $, then
		\[ 
		\begin{array}{rcl} 
		\nu^*(J, R) & =  
		& (\nu^1 (I), \nu^2 (I), \ldots , \nu^a (I), \nu^{a+1} (I), \nu^{a+2}(I), \ldots ) = \\[5pt]
		& =  
		& (1, 1, \ldots, 1, d_x, \nu^{a+2}(I), \ldots ) 
		\in (\NN \cup \{\infty \})^\NN.
		\end{array}
		\]
		\label{xch:meaning_nu_sta}
		\xch{Note that our invariant at the closed point $ x $ corresponding to the maximal ideal of $ J $ is $ (N - a_x, d_x) $, where $ a_x = a $.
			Hence, our invariant corresponds to the truncation of the $ \nu^*$-invariant after the first entry $ > 1 $. 
			
			The entries $ \nu^1 (I), \nu^2 (I) , \ldots $ are ordered increasingly and are equal to the orders of a particular
			system of generators $ f_1, f_2 , \ldots $ for $ J $, whose behavior along a blowing up is the same as the one of the ideal $ J $.
			By \cite{CJS} Theorem 3.10, the Hilbert-Samuel function decreases after a permissible blowing up if and only if the $ \nu^* $-invariant does.}

		\item 	If $ X = V(f) $ is an affine hypersurface then $ \nut $ determines
		the order of $ X $ and vice versa.  
		On the other hand, the behavior of the Hilbert-Samuel function is also 
		detected by the order in this case (\cite{CJS} Theorem \xch{3.10}).
		Hence the maximal loci considered by the original CJS-algorithm 
		and by our variant coincide
		for hypersurfaces.
		Nonetheless, the algorithms may differ in their way of resolving the maximal locus since this is in general not a hypersurface.
		\item 	Since we assume $ \dim (X) = 2 $ and $ X $ reduced, $ \MaxNu(X) $ has at most dimension one.
		In particular, $ \MaxNu(X) $ itself has at most isolated singularities.		
	\end{enumerate}
	
\end{Rk}

\begin{Ex}
	\label{Ex:order_not_good}
	Let $ R = \ZZ[t,v,w,y,z]_{\mathfrak{m} } $, where $ \mathfrak{m} = \langle p, t,v,w,y,z\rangle $,
	for $p \in \ZZ $ prime.
	We consider the {scheme} $ X := V(J) \subset \Spec (R) $, where 
	\[ 
	J := \langle \, p, \, t, \, v^2 - y^3, \, z^5  - y^2 w^5  \, \rangle \subset R .
	\]
	Then, $ \ord_\mathfrak{m} (J) = 1 $, although the point $ x $ in $ X $ corresponding 
	to $ \mathfrak{m} $ is singular.
	Hence, the order cannot distinguish regular and singular points. 
	
	On the other hand, since $ \ord_\mathfrak{m} (v^2 - y^3) = 2 $ is maximal 
	and $ \ord_\mathfrak{m} (z^5  - y^2 w^5) = 5 \geq 2  $,
	we have $ \nut(x) = (4,2) $.
	In fact, $ \maxNu(X) = (4,2) $ and $ \MaxNu ( X ) = V(p,t,v,y,z) $. 
	
	Note: if we put $ P := \langle p,t,v,y,z \rangle $, then
	$
	\ord_P (z^5  - y^2 w^5) = 2 < 5 = \ord_\mathfrak{m} (z^5  - y^2 w^5),
	$  
	a difference which is undetected by $ \nut $.
\end{Ex}

\begin{Prop}
	\label{Prop:A-D_for_nut}
	The map $ \nut = \nut_{X,Z} : X \to \NN^2 $ 
	distinguishes regular from singular points {(A)}, 
	is Zariski upper semi-continuous {(B)}, 
	is infinitesimally upper semi-continuous {(C)}, 
	and can only decrease finitely many times until it detects regularity {(D)}.
\end{Prop}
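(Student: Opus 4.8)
\emph{Overview, and properties (A) and (D).}
The plan is to establish the four properties separately, spending essentially no effort on \textbf{(A)} and \textbf{(D)} and concentrating on \textbf{(B)} and \textbf{(C)}. For \textbf{(A)} I would just read off Observation~\ref{Obs:d_x}: if $ \ord_{\mathfrak m_x}(J) \geq 2 $ then $ a_x = 0 $, $ d_x = \ord_{\mathfrak m_x}(J) > 1 $ and $ x $ is singular, while if $ \ord_{\mathfrak m_x}(J) = 1 $ the equivalence (ii)$\,\Leftrightarrow\,$(iii) of Observation~\ref{Obs:d_x} says $ d_x > 1 $ exactly at singular points; hence the second coordinate of $ \nut $ equals $ 1 $ precisely on the regular locus. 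For \textbf{(D)}, since $ (\NN^2, \leq_{lex}) $ is well-ordered and, by \textbf{(C)}, the maximal value of $ \nut $ never increases under a \good blowing up, that value can strictly decrease only finitely often; this is the implication ``(C)$\,\Rightarrow\,$(D)'' noted right after properties (A)--(D) were listed.

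\emph{Property (B).}
I would verify the two conditions of Lemma~\ref{Lem:CJS_usc}. First, the first coordinate $ x \mapsto N - a_x $ is upper semi-continuous on its own: locally, $ a_x $ is the rank at $ x $ of the matrix of coefficients of the linear parts (equivalently the first Hasse derivatives, cf.\ Section~3) of a generating set of $ J $, and the loci where this rank is $ \geq c $ are open; in particular $ a_x \leq a_y $ whenever $ x \in \overline{\{y\}} $, and $ a_x $ equals $ a_y $ on a dense open of $ \overline{\{y\}} $. For condition~(1) let $ x \in \overline{\{y\}} $: if $ a_x < a_y $ we are done on the first coordinate, while if $ a_x = a_y =: \alpha $ I would pick, using a maximal nonvanishing minor of the above matrix, elements $ f_1, \ldots, f_\alpha \in J $ whose linear parts are independent at both $ x $ and $ y $; then $ \mathcal Y := V(f_1, \ldots, f_\alpha) $ is regular near both points, contains $ X $, and is a minimal regular subscheme through each of them, so it realises the subscheme of Observation~\ref{Obs:d_x} at both. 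Since $ \mathcal O_{\mathcal Y,y} $ is a localization of $ \mathcal O_{\mathcal Y,x} $, upper semi-continuity of the order gives $ d_x \geq d_y $ (the case $ d_y = 1 $ being trivial), hence $ \nut(x) \geq \nut(y) $. For condition~(2) I would restrict to a dense open of $ \overline{\{y\}} $ on which $ a_x \equiv a_y $ and a single $ \mathcal Y $ realises the subscheme of Observation~\ref{Obs:d_x}, and shrink further so that the order of the ideal of $ X $ inside $ \mathcal Y $ is constantly $ d_y $ (it is $ \geq d_y $ on all of $ \overline{\{y\}} $ and $ \geq d_y+1 $ only on a proper closed subset; for $ d_y = 1 $ this is generic regularity).

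\emph{Property (C).}
This is the substantial step. Let $ D \subseteq \MaxNu(X) $ be a regular center, $ Z' := \Bl_D(Z) $, $ X' \subseteq Z' $ the strict transform, and $ (\alpha, \delta) := \maxNu(X) $; only points $ x' $ of $ X' $ over $ D $ need to be checked, and for such $ x' $ over $ x \in D $ we have $ \nut(x) = (\alpha, \delta) $ when $ \delta > 1 $. The case $ \delta = 1 $ is immediate ($ X $, hence $ X' $, is regular, and blowing up does not raise the dimension of a component, so $ \nut(x') \leq (\alpha,1) $). For $ \delta > 1 $ the plan is: take the regular subscheme $ \mathcal Y $ of Observation~\ref{Obs:d_x} through $ x $; since $ D \subseteq X \subseteq \mathcal Y $ with $ D, \mathcal Y $ regular, its strict transform $ \mathcal Y' \subseteq Z' $ is again regular of the same dimension, contains $ X' $, and realises $ X' $ as the strict transform of $ X \subseteq \mathcal Y $. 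The ideal of $ X $ inside $ \mathcal Y $ has order $ d_x = \delta $ at $ x $, and $ D \subseteq \Ord{\geq(\alpha,\delta)}(X) $ makes $ D $ permissible for it, so by Hironaka's non-increase of the order under a permissible blowing up (\cite{Hiro64}, cf.\ also \cite{CJS}) the order of $ X' $ inside $ \mathcal Y' $ at $ x' $ is $ \leq \delta $. Finally $ \mathcal Y' $ is a regular subscheme through $ x' $ containing $ X' $, so the minimal-dimension characterisation of the subscheme of Observation~\ref{Obs:d_x} yields $ N - a_{x'} \leq \dim \mathcal Y' = N - a_x = \alpha $, and in case of equality $ \mathcal Y' $ is itself minimal, so $ d_{x'} $ equals the order of $ X' $ inside $ \mathcal Y' $ at $ x' $ (or $ X' $ is regular there), which is $ \leq \delta $. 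Hence $ \nut(x') \leq (\alpha,\delta) = \maxNu(X) $ in all cases.

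\emph{Main obstacle.}
The delicate part is clearly \textbf{(C)}. The work will be in justifying that the auxiliary regular subscheme of Observation~\ref{Obs:d_x} transforms, under a \good blowing up, into a regular subscheme of the same dimension that still contains $ X' $ and is still of minimal dimension at the points over $ D $, and that the invariant $ d_x $ (when $ > 1 $) is genuinely the order of the ideal of $ X $ inside this subscheme, so that Hironaka's classical order bound can be transported. One must also keep the fixed $ N $ carefully apart from the varying local dimensions $ n_x $, which is precisely the reason $ \nut $ is built from $ N - a_x $ rather than $ n_x - a_x $ (cf.\ Example~\ref{n-aNotEmbdim}).
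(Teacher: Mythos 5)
Your proposal is correct and follows essentially the same approach as the paper: (A) from Observation~\ref{Obs:d_x}, (D) from (C), (B) via Lemma~\ref{Lem:CJS_usc} and the upper semi-continuity of the order of the ideal of $X$ inside the auxiliary regular subscheme $\mathcal Y = V(f_1,\dots,f_a)$, and (C) via non-increase of order under a \good\ blowing up. The only small variation is in (C), where you apply the order bound directly to the ideal of $X$ inside $\mathcal Y$, whereas the paper first extracts a single element $h \in J$ with $\ord_{\overline{\mathfrak m}}(\overline h)=d_x$ and applies the hypersurface order bound to $V(h)\subset Z$; both routes reduce to the same CJS/Hironaka non-increase result, and you make the auxiliary step $a_{x'}\geq a_x$ (which the paper leaves implicit) more explicit --- though it is cleaner to phrase it via codimension of $\mathcal Y'$ in $Z'$ rather than $\dim\mathcal Y' = N - a_x$, since the latter presumes $x'$ is a closed point.
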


\begin{proof}
	Since $ \nut $ takes values in $ \NN^2 $, {(D)} is implied by {(C)}.
	Further, we have seen in Observation \ref{Obs:d_x} 
	that $ d_x $ distinguishes singular and regular points.
	Hence {(A)} holds.
	
	For {(C)}, we may assume that $ X $ is singular.
	Let $ x \in \MaxNu(X) $.
	As before, $ J $ denotes the ideal in $ R:= \cO_{Z,x} $
	defining $ X $ locally at $ x $.
	Let $ f_1, \ldots, f_a, \ldots, f_r \in J $ be generators for $ J $
	such that $ \ord_\mathfrak{m} (f_i) = 1  $ and 
	$ f_i \notin \langle f_1, \ldots f_{i-1} \rangle $,
	for $ 1 \leq i \leq a $, 
	and $ \ord_\mathfrak{m} (f_{j}) > 1  $, for $ j > a $
	(cf.~Observation~\ref{Obs:d_x}).
	In particular, $ V(f_1, \ldots, f_a ) $ is regular.
	
	Let $ \pi : Z' := \Bl_D (Z) \to Z $ be a \good blowing up with center 
	$ D \subset \MaxNu(X) $ containing $ x $.
	Denote by $ X' $ the strict transform of $ X $ in $ Z' $.
	We have to show $ \nut (x') \leq \nut (x) $, 
	for every $ x' \in \pi^{-1} (x) $. 
	(Note that $ \dim (Z') = \dim (Z) $).
	
	Above the point $ x $, the strict transform of $ X $ is given by the 
	strict transforms of the elements $ h \in J $. 
	Since $ X $ is singular, we can choose $ h \in J $ such that 
	$ \ord_{\overline{\mathfrak m}} ( \overline{h} ) = d_x $, where 
	$\overline{\mathfrak m}$ and $\overline{h}$ denote the images of the
	respective objects in $R/\langle f_1, \ldots f_a \rangle$.
	\label{xch:Mora} Without loss of generality, we may assume that $ h $ is of order $ d_x $ at $ x $.
	\footnote{\xch{The representative can, for instance, be chosen as the 
        element $f_{a+1}$ in the above mentioned computation of a standard 
        basis leading to the $\nu^*$-invariant, cf.  
        Remark~\ref{rem_nut_is_nu_trunc}.\\
        From another perspective and moving $x$ to the origin, the knowledge of
        $ \ord_{\overline{\mathfrak m}} ( \overline{h} ) = d_x $
        implies that after translation the Mora normal form 
        (cf. \cite{GP}, Algorithm 1.7.6, for a textbook reference) 
        of an arbitrary representative of $\overline{h}$ w.r.t a standard 
        basis of $\langle f_1,\dots,f_a \rangle$ will be of order $d_x$. 
        Of course, retranslating the resulting normal form back to $x$, we
        obtain the desired representative.}}

	Note that $ D $ is a weakly permissible center for the hypersurface 
	$ V(h) \subset Z $ at least in a Zariski-open neighbourhood of $x$
	(i.e., it is regular and contained\footnote{It may, of course happen
		that the maximal refined order of $h$ exceeds $d_x$ somewhere else, but 
		this can only happen on a Zariski-closed subset not meeting $x$.
		Hence it is possible to avoid this set by taking a suitable
		Zariski-open neighbourhood of $x$.} in $ \MaxNu( V(h)) $ 
	-- in fact the latter coincides with the maximal order locus of $ V(h) $).
	It is a well-known fact (see \cite{CJS} Theorem~{\xch 3.10}, using loc.~cit.~Definitions~{\xch 2.26}
	{and} \xch{2.17}(2)) 
	that  
	the order of a hypersurface does not increase under such blowing ups.
	This implies $ \nut (x') \leq \nut (x) $  and hence  {(C)}.

	It remains to prove {(B)}, which is the upper semi-continuity
	of $\nut$. 
	For this, we will make use of Lemma \ref{Lem:CJS_usc}:
	Let $ x, y \in X \subset Z $ with $ x \in \overline{ \{ y \} } $.
	We have to show that the inequality $ \nut(x) \geq \nut(y) $ holds. 
	Let $ U_0 = \Spec(S) $ be an affine open set containing $ x $ and $ y $.
	Let $ f_1, \ldots, f_a \in S $ be such that
	each $ f_i $ is of order one at $ x $
	and they define $ a_x = a $ 
	(cf. Observation \ref{Obs:d_x}).
	By the upper semi-continuity of the order, 
	the order of each $ f_i $ does not increase by passing from $ x $ to $ y $.
	Since $ y \in X $, the order of $ f_i $ has to be at least one at $ y $.
	Hence, we have $ a_y \geq a_x =a $ and if the inequality is strict then $ \nut (y) < \nut (x) $.
	
	Suppose $ a_y = a_x $. 
	We stay in the same notation ($ J \subset S $ the ideal defining $ X \cap U_0 $ in $ U_0  = \Spec (S) $ containing $ x $ and $ y $).
	Let $ h \in J $ be an element of order $ d_x $ at $ x $ 
	(cf.~proof of (C)).
	By choosing $ U_0 $ sufficiently small, we may assume that 
	$  V(f_1, \ldots, f_a ) \subset U_0 $ is regular.
	Let us denote by $ \overline{h} $ the image of $ h $ in $ S/\langle f_1 , \ldots, f_a \rangle $.
	Note that $ x, y \in V(f_1, \ldots, f_a ) $.
	We have that the order of $ \overline{h} $ at $ x $ is $ d_x $.
	Again, by the upper semi-continuity of the order, we obtain that the order of $ \overline{h} $ at $ y $ is at most $ d_x $.
	Therefore, $ d_y \leq d_x $, i.e., 
	$ \nut (y) \leq  \nut (x) $.
	In other words, Lemma \ref{Lem:CJS_usc}(1) holds.

	For Lemma \ref{Lem:CJS_usc}(2), let $ y \in X $.
	We need to show that there exists a dense open subset
	$ U \subset \overline{ \{ y \}} $ 
	such that $ \nut(x) = \nut(y) $ for all $ x \in U $.
	Let $ U_0 = \Spec(S) $ be an affine neighborhood of $ y $ in $ Z $ and denote by $ J \subset S $ the ideal defining $ X $.
	Using Observation \ref{Obs:d_x} at $y$ and analogous notation to
	before, we choose generators 
	$ ( f_1, \ldots, f_a, f_{a+1}, \ldots, f_r ) $ for $ J $  in $ S $ 
	such that $ V (f_1, \ldots, f_a ) $ is regular and 
	$N-a$ is the first entry of $\nut(y)$.
	(Note: We can choose $ U_0 $ sufficiently small for this).
	From the construction of $ d_y $ it follows that $ d_y $ coincides
	with the order of $ J \cdot S/ \langle f_1, \ldots, f_a \rangle $ at $ y $.
	The latter is upper semi-continuous and by Lemma \ref{Lem:CJS_usc} there exists 
	a dense open subset $ V $ contained in $ V (f_1, \ldots, f_a ) $
	such that the order at every point in $ V $ is equal to $ d_y $.
	This yields then a dense open set $ U \subset \overline{ \{ y\} }$ such that $ \nut(x) = \nut(y) $ for all $ x \in U $. 
	Hence Lemma~\ref{Lem:CJS_usc}(2) holds.
	As a conclusion we obtain that $ \nut $ is upper semi-continuous.
\end{proof}

The resolution process provides us with more data than just the strict 
transform $ X' $ of $ X $ in $ Z '$.
Namely, there are also the exceptional divisors $ E' $ of the blowing ups.
Additionally to the above, 
one imposes for an embedded resolution of singularities that the strict
transform of $ X $ has at most simple normal crossings with $ E' $ and 
that all centers in \eqref{eq:resol_seq} meet the exceptional divisors 
transversally.

Furthermore, in order to attain a canonical resolution of singularities one has 
to take the exceptional divisors into account.
For example, the singular locus of $ X := V ( x^2 - y^2 z^2 ) $ (which coincides
with $ \MaxNu(X) $) consists of the two lines $  L_1 := V(x,y) $ 
and $ L_2 := V(x,z) $.
A priori there is no way to distinguish them and we need to blow up their
intersection $ V(x,y,z) $.
At first glance the situation did not change locally at the point with
coordinates $ ( x', y', z') = ( \frac{x}{z}, \frac{y}{z}, z ) $.
But, in fact, $ V(x', z') $ is contained in the exceptional divisor, 
whereas $ V(x',y') $ is the strict transform of $ L_1 $.
Thus we can distinguish the lines and pick one of them as the next center.
(For example, the CJS-algorithm will choose $ L_1 '$  since it is "older" than
the other component,
see also Example \ref{Ex:loop}). 

It is a general philosophy in resolution of singularities to encode the 
information on the exceptional divisors in some way. 
Following \cite{CJS}, we do this using a so called boundary $ \cB $ (\cite{CJS} Definition \xch{4.3}) 
and adapt the notion of $ \cB $-permissible centers 
(loc.~cit. Definition \xch{4.5}) for our situation.  

\begin{Def}
	Let $ X \subset Z $ be as before.
	\begin{enumerate}
		\item 	A {\em boundary on $ Z $} is a collection 
		$ \cB= \{ B_1, \ldots, B_d \} $ 
		of regular divisors on $ Z $ such that the associated 
		divisor $ \mathrm{div}(\cB) = B_1 \cup \cdots \cup B_d $ 
		is a simple normal crossings divisor.
		For $ x \in Z $, we let $ \cB(x) := \{ B_i \mid x \in B_i \} $ be the boundary components passing through $ x $.
		
		\item A closed subscheme $ D \subset X $ is called {\em $ \cB $-\good}if it is \good and has at most simple normal crossings with the boundary $ \cB $
		(i.e., $ D $ is regular, contained in $ \MaxNu(X) $, and intersects $ \mathrm{div}(\cB) $ transversally).
		
		We also say that the corresponding blowing up 
		$ \pi : Z' :=\Bl_D(Z) \to Z $ with center $ D $ is a 
		{\em $ \cB$-\good blowing up}.
		The {\em transform $ \cB' $ of the boundary $ \cB $ under $ \pi $} 
		is defined to be the union of the strict transforms of the 
		components of $ \cB $ and the exceptional divisor $ E'_\pi $ of $ \pi $
		(i.e., $ \cB' = \{ B_1' , \ldots, B_d', E'_\pi \} $ with the obvious notations).
	\end{enumerate}
\end{Def}

We {\em distinguish the boundary components} in two sets
(cf.~\cite{CJS} Lemmas \xch{4.7} and \xch{4.14}):
Suppose we are at the beginning of the resolution process
and we consider some point $ x $.
Then we define all boundary components passing through $ x $ 
to be {\em old}, $ O(x) := \cB(x) $. 

Let $ \pi :Z' \to Z $ be a blowing up of $ Z $ in a
regular center $ D \subset X $ containing $ x $.
Let $ x' \in X' $ be a point on the strict transform of $ X $
which lies above $ x $, $ \pi(x') = x $.

\begin{itemize}
	\item 
	If $ \nut (x') < \nut(x) $ then the singularity improved at $ x' $
	and we treat the situation as if we are at the beginning.
	Hence $ O(x') := \cB'(x') $.
	
	\item 
	Suppose $ \nut (x') = \nut(x) $. 
	Then we define the old boundary components 
	$ O(x') $ to be the subset of $ \cB'(x') $
	consisting of the strict transforms of the elements 
	in $ O(x) $ which pass thorough $ x' $.
	All other components are called {\em new}, $ N(x') := \cB'(x')  \setminus O(x') $.
\end{itemize}
The reason for this distinction is that we have some control on 
the new boundary components and how they behave with respect to $ X $. 
On the other hand, we loose this control once the singularity 
strictly improved, i.e., once $ \nut $ decreases strictly.

\begin{Def} \label{DefLogRefinedOrder}
	Let $ X \subset Z $ be as before and let $ \cB $ be a boundary on $ Z $.
	We define the {\em log-refined order} by 
	\[ 
	\begin{array}{cccl}
	\Onut = \Onut_{X,Z} : & X & \longrightarrow 	  & ( \NN^3, \leq_{lex} ) \\[3pt]
	& x & \mapsto & (\nut(x), |O(x)|). 
	\end{array}
	\]
	For $ \widetilde{A} \in \NN^3 $, we set
	\[
	\begin{array}{ccc} 
	\OOrd{\geq \widetilde{A}}(X) := 
	\{ x \in X \mid \Onut(x)  \geq \widetilde{A} \},
	%
	\end{array}	
	\]
	\[	%
	\OmaxNu(X)  :=  (\alpha, \delta , \sigma ) := 
	\max \{ \Onut(x) \mid x \in X  \},
	\]	
	\[	
	\OMaxNu ( X ) :=
	\left\{ 
	\begin{array}{cl}
	\OOrd{\geq \OmaxNu(X)}(X), 
	& \mbox{if } \delta > 1 \mbox{ or } \sigma >0 ,\\
	X, & \mbox{if } \delta = 1 \mbox{ and } \sigma = 0 .\\
	\end{array}
	\right.		
	\]
\end{Def}

Let us explain how to obtain $ \OOrd{\geq (A,b)} (X) $ from $ \Ord{\geq A} (X) $, for $ A \in \NN^2 , b \in \NN $:

\begin{Construction}
	\label{Constr:vonMaxnachMaxO}
	Let the situation be as in the previous definition and, in particular, let 
	$ \cB = \{ B_1, \ldots, B_ d \} $.
	Let $ \widetilde A = (A, b) \in \NN^3 $ with $ A \in \NN^2 $, $ b \in \NN$.
	{First, we observe that, 
		for any $ A' \in \NN^2 $ with $ A' > A $, we have $ (A', 0) > \widetilde A $.
		Hence, $ \bigcup_{A'>A} \Ord{\geq A'} (X) \subseteq \OOrd{\geq \widetilde{A}} (X) $.
		The upper semi-continuity 
		of $ \nut $ (Proposition~\ref{Prop:A-D_for_nut}), implies that 
		$
		\Ord{\geq A'}(X) =  
		\{ x \in X \mid \nut(x)  \geq {A'} \}
		$ 
		is closed.
		Since there are only finitely many $ A' $ with $ \Ord{\geq A'} (X) \neq \emptyset $,
		the union $ \bigcup_{A'>A} \Ord{\geq A'} (X) $ is closed.}
		{If}
		$ b > d $ or $ b > N = \dim(Z) $, 
		then $ \OOrd{\geq \widetilde{A}} (X) = {\bigcup_{A'>A} \Ord{\geq A'} (X)} $.
	
	Hence, let us assume that {$ b \leq \min\{d, N\} $}.
	We {introduce} the closed set
	\[ 
	Y_0 := \bigcup_{(i_1, \ldots, i_b)} ( \Ord{\geq A}(X) \cap B_{i_1} \cap \ldots \cap B_{i_b} ) ,
	\]
	{where the union runs through all subsets of $ \{ 1, \ldots, d \} $ with $ b $ pairwise different elements.}
	Suppose there is an irreducible component $ W $ of $ Y_0 $
	{and a point $ w\in W $ such that $ \Onut(w) < \widetilde A $}.
	\xch{\label{xch:Constr2-11}(This may happen since we are intersecting with $ b $ exceptional components in the definition of $ Y_0$, among which some could be new.)}
	{If there exists at least one $ j \in \{1 ,\ldots, d\} $ such that $ W \nsubseteq B_j $,}
	then we replace $ W $ in $ Y_0 $ by 
	\[ 
	W_0 := \bigcup_{ j \,:\, W \nsubseteq B_j}  ( W \cap B_j).
	\]
	If we have $ W \subset B_j $, for {$1 \leq j \leq d $}, then
	{we replace $ W $ by the empty set $ W_0 := \emptyset $.} 
	The dimension of each irreducible component of $ W_0 $ is strictly smaller than $ \dim(W) $ if $ W_0 \neq \emptyset $.
	Hence, after finitely many iterations this procedure ends.
	Let us denote by $ Y_\ast $ the resulting closed set.
	By construction, we have that
	\[
	{\OOrd{\geq \widetilde{A}}(X) = Y_\ast \cup \bigcup_{A'>A} \Ord{\geq A'} (X)}.
	\]
	In particular, we obtain that $ \Onut $ is an upper semi-continuous function. 
\end{Construction} 

By combining the latter with Proposition \ref{Prop:A-D_for_nut}, we get
\begin{Prop}
	\label{Prop:A-D_for_nut_O}
	The map $ \Onut : X \to \NN^3 $ 
	distinguishes regular from singular points {(A)}, 
	is Zariski upper semi-continuous {(B)}, 
	is infinitesimally upper semi-continuous {(C)}, 
	and can only decrease finitely many times until it detects regularity {(D)}.
\end{Prop}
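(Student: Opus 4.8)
The plan is to obtain all four properties from Proposition~\ref{Prop:A-D_for_nut} together with Construction~\ref{Constr:vonMaxnachMaxO}, using that $\Onut = (\nut,|O(\cdot)|)$ merely appends one further lexicographic coordinate to $\nut$. Property~{(B)}, Zariski upper semi-continuity, has in fact already been established inside Construction~\ref{Constr:vonMaxnachMaxO}: for every $\widetilde A\in\NN^3$ the set $\OOrd{\geq\widetilde A}(X)$ is built from the closed set $\Ord{\geq A}(X)$ by intersecting with finitely many tuples of boundary components and then running the dimension-dropping clean-up, hence is closed. Property~{(A)} is then immediate, since by Observation~\ref{Obs:d_x} a point $x\in X$ is singular exactly when $d_x>1$, i.e., exactly when the second coordinate of $\Onut(x)$ exceeds $1$; so $\Onut$ separates singular from regular points by the very same criterion as $\nut$. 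Finally, as $(\NN^3,\leq_{lex})$ admits no infinite strictly descending chain, {(D)} follows formally from {(C)}, exactly as for $\nut$. Thus everything reduces to checking {(C)}.

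For {(C)} I would argue as follows. Let $\pi:Z'=\Bl_D(Z)\to Z$ be the blowing up of a regular center $D$ contained in the maximal locus $\OMaxNu(X)$ of $\Onut$ (in the algorithm $D$ is moreover $\cB$-\good, but transversality plays no role in what follows). One checks directly from the definitions that $\OMaxNu(X)\subseteq\MaxNu(X)$, so $D$ is in particular a \good center for $X$, and Proposition~\ref{Prop:A-D_for_nut}{(C)} applies: $\nut(x')\leq\nut(x)$ for every $x\in D$ and every $x'\in X'$ above $x$, i.e., the first two coordinates of $\Onut$ do not increase. If this inequality is strict, then $\Onut(x')<\Onut(x)$ regardless of the last coordinate and we are done. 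Otherwise $\nut(x')=\nut(x)$, and by the definition of the old boundary $O(x')$ then consists precisely of those strict transforms under $\pi$ of components of $O(x)$ which pass through $x'$ -- in particular the new exceptional divisor $E'_\pi$ is declared \emph{new}. Since $\pi$ is an isomorphism away from the center, distinct boundary components have distinct strict transforms, so $O(x')$ is in bijection with a subset of $O(x)$; hence $|O(x')|\leq|O(x)|$. Combining, $\Onut(x')=(\nut(x'),|O(x')|)\leq(\nut(x),|O(x)|)=\Onut(x)$, which is {(C)}.

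The only point genuinely beyond the $\nut$-case is thus the monotonicity of the last coordinate under a \good blowing up, and the argument above shows this is built into how ``old'' is defined: an old boundary component can only be lost -- when its strict transform misses $x'$ -- never created, while the counter is reset to $\cB'(x')$ exactly at a step where $\nut$ has already strictly dropped. Consequently I do not expect any real obstacle in this proposition itself; the substantive work has already been done, namely the non-increase of the order of an auxiliary hypersurface under weakly permissible blowing ups in Proposition~\ref{Prop:A-D_for_nut}{(C)} and the dimension-drop argument for closedness in Construction~\ref{Constr:vonMaxnachMaxO}.
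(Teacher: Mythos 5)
Your proof is correct and follows essentially the same route as the paper: (A) directly from $\nut$, (B) from Construction~\ref{Constr:vonMaxnachMaxO}, (D) from the range $\NN^3$ together with (C), and (C) by combining the non-increase of $\nut$ under a \good blowing up with the observation that, whenever $\nut(x')=\nut(x)$, the set $O(x')$ is by definition indexed by a subset of $O(x)$, so $|O(x')|\leq|O(x)|$. The only cosmetic difference is that the paper invokes the definition of a $\cB$-\good center (which already stipulates $D\subseteq\MaxNu(X)$) to conclude that $\nut$ does not increase, whereas you re-derive this by first checking $\OMaxNu(X)\subseteq\MaxNu(X)$ from the definitions; both are fine and the containment you note is indeed correct and worth being aware of.
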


\begin{proof}
	By Proposition \ref{Prop:A-D_for_nut}, {(A)} holds.
	The previous construction implies property (B) and, 
	since $ \Onut$ takes values in $ \NN^3 $, {(D)} follows.
	It remains to show (C).
	
	A $ \cB $-\good blowing up is in particular \good and thus 
	$ \nut $ does not increase at points $ x' $ lying above $ x $.
	Furthermore, if $ \nut(x') = \nut(x) $ then the definition of $ O(x') $ implies
	$ |O(x')| \leq |O(x)| $.
	Therefore we obtain {(C)}.
\end{proof}

\begin{Def}
	\label{Def:horizon_vertical}
	Let $ X \subset Z $ be as above. 
	We distinguish the irreducible components of $ \OMaxNu (X) $ as follows:
	If $ Z $ contains a field, then all irreducible components are defined to be \textit{vertical}. 
	
	Suppose $ Z $ does not contain a field.
	Let $ Y \subset  \OMaxNu (X) $ be an irreducible component. 
	We say $ Y $ is a {\em horizontal component} of $ \OMaxNu (X) $ 
	if $ Y \times_{\Spec(\ZZ)} \Spec{(\QQ)} \neq \emptyset $.
	Otherwise, $ Y $ is called a {\em vertical component}.
	We denote by $\OMaxNu_{hor}(X) $ the set of horizontal components 
	of $ \OMaxNu (X) $.
\end{Def}

\begin{Ex}
	Consider $ Z = \Spec( \ZZ[x,y]) $
	with empty boundary.
	Let 
	\[ 
	X = V (x^2 - y^3 5^7 (y - 2)^4) .
	\]
	We leave it as an exercise to the reader to show that 
	\[ 
	\OMaxNu(X) = \MaxNu(X) = \MaxOrd(X) =  V(x, y) \cup V(x, 5) \cup V(x, y - 2) .
	\]
	By the previous definition, 
	$ V (x,y) $ is a horizontal component
	and $ V(x,5 ) $ is a vertical component lying above the prime $ 5 $.
	Furthermore, the third component, $ V (x, y - 2) $, 
	is also a horizontal component of $ \OMaxNu(X) $ since it is 
	non-empty after passing to $ \Spec (\QQ[x,y]) $.
	Note: $ V (x, y+p-2, p) \in V (x, y - 2) $ 
	since $ y - 2 = (y+p-2) - p $, for every prime $ p \in \ZZ$.
\end{Ex}

\begin{Def}
	Let $ \pi : Z' := \Bl_D(Z) \to Z $ be a blowing up with some irreducible regular center $ D $.
	Let $ Y' \subset Z' $ be an irreducible subscheme contained in the exceptional 
	divisor $ E' := \pi^{-1} (D) $, $ Y' \subset E' $.
	Then {\em $ Y' $ dominates $ D $} if its image under $ \pi $ 
	is dense in $ D $.
	
	If the center $ D $ is not necessarily irreducible, we say {\em $ Y' $ dominates $ D $} if there exists an irreducible component $ D_0 $ of $ D $ which is dominated by $ Y' $.
\end{Def}

For example, let $ Z = \mathbb{A}^3_S $ with coordinates $ (x,y,z) $, 
$ S $ any base, $ D = D_0 = V (x,y) $.
We consider the chart with coordinates $ ( x',y',z ) := (x,  \frac{y}{x}, z ) $.
In this local situation, the exceptional divisor $ E' $ is $ V (x') $, and
the component $ V(x', {y'} ) \subset E' $ dominates the center, 
whereas $ V(x', z) \subset E' $ does not, the
closure of its image in $ Z $ is $ V(x,y,z) $.

\smallskip

We are now prepared to formulate our variant of the {CJS}-algorithm.
We deliberately formulate this construction in a way close to the original one 
\cite{CJS} Remark \xch{6.29}.

\begin{Construction}
	\label{CJS-Constr}
	Let $ X $ be a reduced excellent Noetherian scheme 
	of dimension two, embedded in a regular scheme $ Z $,  $ X \subset Z $.
	Let $ \cB $ be a boundary on $ Z $.
	
	\smallskip 
	
	\noindent 
	\underline{\em Horizontal case:} \ Suppose $  \OMaxNu_{hor} (X) \neq \emptyset $, we set
	\[
	Y_0 := 
	\OMaxNu_{hor} (X)\xch{\label{xch:period2}.}
	\]
	Since $ X $ is reduced, $ \dim ( Y_0 ) < \dim ( X ) $
	unless $ X $ is regular in which case the construction of a desingularization is complete.
	By the induction on dimension we find a finite sequence 
	of $ \cB $-\good blowing ups,
	\begin{equation}
	\label{eq:inductionblowups}
	\begin{array}{ccccc}
	Z =:Z_0 &	\longleftarrow	& \cdots &	\longleftarrow 	& Z_{m_0} =: Z' 	\\[5pt]
	\cB=: \cB_0 	&		& \cdots &	 	& \cB_{m_0} =: \cB' ,
	\end{array}
	\end{equation}		
	such that the strict transform $ Y_0'$ of $ Y_0 $ in $ Z' $ is 
	a $ \cB' $-\good center for $ X_{m_0} $.
	The next center chosen according to the construction is 
	$ D_{m_0} := Y'_0 $,
	\[
	\begin{array}{ccr}
	Z' = Z_{m_0} 		&	\longleftarrow	& \Bl_{D_{m_0}} ( Z_{m_0} ) =: Z_{m_0 + 1} \,	\\[5pt]
	\cB' = \cB_{m_0} 	&			 	& \cB_{m_0 + 1} .
	\end{array}
	\]
	Let $ X_{m_0 +1} $ be the strict transform of $ X_0 := X $ under 
	the previous sequence of blowing ups.
	
	If $ \OmaxNu( X_{m_0 +1} ) < \OmaxNu(X_0) $ then 
	the singularity improved strictly 
	and the construction restarts with the new maximal value for $ \Onut $ and the 
	corresponding data.
	Further, if $ \OmaxNu( X_{m_0 +1} ) = \OmaxNu(X_0) $ and $ \OMaxNu_{hor} (X_{m_0 +1}) = \emptyset $, we go to the vertical case below.    
	
	Suppose $ \OmaxNu( X_{m_0 +1} ) = \OmaxNu(X_0 ) $ and $ \OMaxNu_{hor} (X_{m_0 +1}) \neq \emptyset $.
	In order to determine the next center we need to study the locus
	$ Y_{m_0 + 1} := \OMaxNu_{hor} (X_{m_0 +1}) $.
	We have a decomposition
	\[
	Y_{m_0 + 1} = Y_{m_0 + 1}^{(0)} \cup Y_{m_0 + 1}^{(1)} \cup \cdots \cup Y_{m_0 + 1}^{(m_0 + 1)} ,
	\ \ \ \mbox{ where:}
	\]
	\begin{itemize}
		\item[{(i)}]	For $ 1 \leq i \leq m_0 $, $ Y_{m_0 + 1}^{(i)} $ denotes 
		the union of irreducible components in $ Y_{m_0 +1} $ that first 
		occurred after the $ i $-th blowing up of the algorithm 
		("label $ i $ components").
		\item[{(ii)}]	$ Y_{m_0 + 1}^{(0)} $ are those irreducible components of 
		$ Y_{m_0 + 1} $ dominating the center $ D_{m_0} $ of the last blowing up. These inherit the label of $ D_{m_0} $.
		\item[{(iii)}]	$ Y_{m_0 + 1}^{(m_0 + 1)} $ are those irreducible components 
		of $ Y_{m_0 + 1} $ lying above the center $ D_{m_0} $ but not dominating. Hence we assign the label $ m_0 + 1 $ to them.
	\end{itemize}
	We also say, $ Y_{m_0 + 1}^{(j)} $ are the 
	{\em irreducible components of $ Y_{m_0 + 1} $ with label $ j $},
	$ 0 \leq j \leq m_0 + 1 $.
	
	Let 
	\[ 
	k := \min \{ j \in \{ 0, \ldots, m_0 + 1\} \mid Y_{m_0 + 1}^{(j)} \neq \emptyset \} .
	\] 
	In the next step, the algorithm repeats the previous, with  
	$ Y_{m_0 + 1}^{(k)} $ now in the place of $ Y_0 $. 
	{Let us point out that in (ii) the components dominating will obtain the label $ k $ instead of $ 0 $, of course.}
	
	\smallskip 
	
	\noindent 
	\underline{\em Vertical case:} \ Suppose $  \OMaxNu_{hor} (X) = \emptyset $.
	We set
	\[
	Y_0 := 
	\OMaxNu (X),
	\]
	and proceed analogous to the horizontal case using $ \OMaxNu$ instead of $ \OMaxNu_{hor} $.
	Note that the vertical case is stable until $ \OmaxNu $ decreases strictly
	(see the remark below).
\end{Construction}

\begin{Rk}
	As we explained before, an irreducible component $ Y' $ in $ \OMaxNu_{hor}(X') $ 
	(resp.~$ \OMaxNu(X') $, if $ \OMaxNu_{hor}(X) = \emptyset$) 
	inherits the label of a component $ D $ contained in $ \OMaxNu_{hor}(X) $ (resp.~$ \OMaxNu(X) $) if $ Y' $ dominates $ D $. 
	In order to obtain this situation, we must blow up an entire irreducible component of $ \OMaxNu_{hor}(X) $ (resp.~$ \OMaxNu(X) $). 
	
	Set $ Y_0 := \OMaxNu_{hor}(X) \subset Z $.
	During the blowing ups in \eqref{eq:inductionblowups}, where
	we prepare $ Y_0 $ to become $ \cB' $-weakly permissible, the centers are strictly
	contained in some irreducible components. 
	Therefore, even if a new irreducible component $ Y' $ in $ \OMaxNu_{hor}(X') $ 
	dominates the center, it never dominates a whole
	irreducible component of $ Y_0 = \OMaxNu_{hor}(X) $.
	Thus we assign to $ Y' $ a new label.
	(The analogous observation applies to $ \OMaxNu(X) $, if $ \OMaxNu_{hor}(X) = \emptyset$).
	
	An irreducible component $ Y' \subset \OMaxNu(X') $ lying above a vertical component of $ D \subset \OMaxNu (X) $ is always vertical.
	Hence, once $ \OMaxNu_{hor} (X) $ is empty, it remains so until $ \OmaxNu(X') < \OmaxNu(X) $ decreases. 
	In other words, in our variant of the {CJS}-algorithm, 
	we first handle the horizontal components 
	and whenever no horizontal component exists, we consider the vertical ones.
\end{Rk}

Let us emphasize that Construction~\ref{CJS-Constr} is a variant of the resolution process by Cossart, Jannsen, Saito,
but the centers are not equal in general,
see Example~\ref{Ex:diff}.
In particular, we have to prove 

\begin{Prop}
	Let $ X $ be a reduced excellent Noetherian scheme of dimension at most two, embedded in a regular scheme $ Z $.
	Suppose that one of the following conditions holds:
	\begin{enumerate}
		\item[(a)] 
		$ X $ has at most dimension one, or
		
		\item[(b)] 
		$ X $ has dimension two and there exists a covering $ Z = \bigcup\limits_{i=1}^m U_i $ such that
		 $ X \cap U_i $ is isomorphic to a hypersurface.  
	\end{enumerate}
	Then the sequence of blowing ups in regular centers constructed in Construction~\ref{CJS-Constr} provides a desingularization of $ X $, i.e., 
	after finitely many blowing ups the strict transform is regular and transversal to the exceptional locus (which is a simple normal crossing divisor). 
\end{Prop}

\begin{proof}
	 Suppose $ X $ has dimension at most one. 
	 Since $ X $ is reduced, its singular locus is at most a finite set of closed points, $ \Sing(X) = \{ p_1, \ldots, p_a \} $.
	 Since blowing ups are isomorphisms outside the center, we may consider each of the closed points in $ \Sing(X) $ separately. 
	 For every $ i \in \{ 1, \ldots, a \}  $, there is a sufficiently small neighborhood $ V_i $ of $ p_i $ such that $ \Sing(X) \cap V_i = \{ p_i \} $. 
	 In particular, the maximal refined order locus of $ X \cap V_i  $ coincides with the maximal Hilbert-Samuel stratum  of $ X \cap V_i  $.  
	After blowing up with center $ p_i $, the singular locus of the strict transform of $ X \cap V_i $ consists of a finite number of closed points and we repeat the previous argument.
	In conclusion, we see that locally our algorithm coincides with the original CJS-algorithm
	and hence the termination follows from \cite{CJS} Theorem~\xch{1.3}.
	
	Let us come to the case (b). 
	Let $ Z = \bigcup\limits_{i=1}^m U_i $ be a covering such that  $ X \cap U_i $ is isomorphic to a hypersurface.  
	If $ X \cap U_i $ is singular, its maximal refined order locus coincides with the corresponding maximal order locus. 
	Furthermore, the maximal Hilbert-Samuel stratum of $ X \cap U_i $ also coincides with the latter.
	Theorem~\xch{3.10} of \cite{CJS} implies that the order and the Hilbert-Samuel function have the same behavior along blowing ups in regular centers contained in the maximal order locus.  
	(Observe that the $ \nu^* $-invariant of \cite{CJS} Theorem~\xch{3.10} is completely determined by the order in the hypersurface case).
	In particular, the refined order locus of the strict transform of $ X \cap U_i $ coincides with the maximal Hilbert-Samuel stratum. 
	
	In \cite{InvDim2} Observation~2.4, the arguments of \cite{CJS} are summarized providing that after a blowing up, one creates at most finitely many closed points or a projective line in the maximal Hilbert-Samuel stratum.
	Since we are in the hypersurface case, this also applies to the locus of maximal refined order.  
	
	By case (a), we know that the preparation process \eqref{eq:inductionblowups} 
	of the lowest label components is finite. 
	By \cite{CJS}~Lemma~\xch{6.30}, the first part of our algorithm is finite:
	More precisely, if we blow up a horizontal component (which must have dimension one),
	then the cited lemma provides that it may happen only finitely many times 
	that there is a projective line dominating the previous center after blowing up.
	(Note that a closed point is necessarily vertical). 
	
	Hence, we may assume that there are no horizontal components in the locus of maximal refined order. 
	Since the latter coincides with the maximal Hilbert-Samuel stratum (as we are in the hypersurface case), 
	the original CJS-centers are the same as those chosen by Construction~\ref{CJS-Constr}.
	Therefore, \cite{CJS} Theorem~\xch{1.3} implies the termination.	
\end{proof}

The following example shows that the proof for the termination of our variant of the CJS-algorithm cannot be reduced to the original result \cite{CJS}, in general. 
More precisely, we construct a scheme with an isolated singularity at a closed point and after blowing up the latter, the locus of maximal order of the strict transform contains a singular curve. 
This does not happen in the original CJS-resolution, where the Hilbert-Samuel function is considered instead of the refined order,
see \cite{InvDim2} Observation~2.4, where the arguments of \cite{CJS} are summarized.

\begin{Ex}
	Let $ a, b, c, d, m \in \ZZ_{\geq 2} $ 
	with
	$ b > 2a $, $ c < d $,
	$ \gcd(a,b) = 1 $,  
	$ m \geq  a(d + 1) $, and  
	$ \gcd(acd,m) = 1 = \gcd(c,d) $.
	Let $ X $ be the scheme given by the ideal 
	\[
	\langle 
	\, 
	x^a - y^b , 
	\,
	(y^{d-c} z^c - u^d)^a  + z^{m} 
	\, 
	\rangle , 
	\]
	where $ (x,y,z) $ are variables and 
	$ u $ is a variable if we are over a field $ k $,
	or $ u = p $ is a prime number if we are working over $ \mathbb{Z} $. 
	The locus of maximal order of $ X $ has ideal $ \langle x, y ,z, u \rangle $.
	If we blow up $ \langle x, y, z, u \rangle $, then we obtain in the $ Y $-chart:
	\[
	x = y_1 x_1,
	\ 
	y = y_1,
	\
	z = y_1 z_1,
	\ 
	u = u_1 y_1
	\]
	and the ideal of the strict transform $ X_1 $ of $ X $ is 
	\[
	\langle 
	\, 
	x_1^a - y_1^{b-a} , 
	\,
	(z_1^c - u_1^d)^a  + y_1^{m-ad} z_1^{m} 
	\, 
	\rangle. 
	\]
	The hypotheses on $ (a,b,c,d,m) $ imply that
	$ b - a > a $ and
	$ m - ad \geq a $.
	Thus, we see that the locus of maximal order of $ X_1 $ contains the {singular} curve given by the ideal
	\[
	\langle x_1, y_1, z_1^c - u_1^d \rangle.
	\]
	On the other hand, one may determine that the maximal Hilbert-Samuel stratum of $ X_1 $ is the closed point corresponding to $ \langle x_1, y_1, z_1, u_1 \rangle $. 
\end{Ex}

\medskip

While the closed point above is also the center of the CJS-resolution, 
it may happen that the center chosen by our variant of the algorithm is different from the original one, see Example~\ref{Ex:diff}.

In conclusion, it is not straight-forward to prove 
the termination of the sequence of blowing ups of Construction~\ref{CJS-Constr} using the techniques of \cite{CJS}. 
Since the present article focuses on aspects of an explicit realizations of the algorithm using {\sc Singular} and GPI-Space,
we leave the proof of the termination in the general case to a later article.

\section{The locus of refined maximal order}
\label{subset:loc_max_ord}

Let $ X \subset Z $ be as before.
In this section we discuss a method to compute the maximal order locus $ \MaxOrd(X) $ of $ X $ from a theoretical viewpoint.
We deduce how to construct the maximal refined order locus $ \MaxNu(X) $, 
from which we obtain its log-variant $ \OMaxNu(X) $ by applying Construction \ref{Constr:vonMaxnachMaxO}. 
Within this, we may restrict ourselves to a finite
affine covering $ \bigcup_{i\in I} U_i $ of $ Z $.
The respective loci are then obtained by gluing together the relevant affine pieces. Note, however, that the maximal value of $ \ord $, $ \nut $, or $ \Onut $ achieved locally on $ X \cap U_i $ is not necessarily a global maximum for $ X $.

\smallskip

We pass to the local situation in a fixed $ U_i $:
Let $ Z = \Spec(A[\bx]) = \mathbb{A}^m_A $, 
where $ A $  is a field or a principal ideal Dedekind domain, and $ (\bx) := (x_1, \ldots, x_{m}) $.
Let $ X = V(\xch{\label{xch:JtoI_X} I_X}) \subset Z $ be the {scheme} defined by a non-zero ideal 
$ \xch{\label{xch:JtoI_X2} I_X} \subset  B :=A[\bx] $.

In Lemma \ref{Lem:ord_x(f)}, we show how to compute the order $ \ord_{I_0} (f) $, 
for $ f \in B $ and $ I_0 := \langle \bx \rangle  \subset B $, using differential operators. 
For a detailed background on (absolute) differential operators, 
we refer to \cite{BernhardThesis} Chapter 2.

Let us recall the definition of {\em Hasse-Schmidt derivatives} 
(cf.~\cite{GiraudPos}, sections 2.5, 2.6, 
or \cite{BernhardThesis} Example (3.3.1)):
Let $ S = k[\bX]  = k [ X_1, \ldots, X_m ] $ be a polynomial ring over a field $ k $.
Let $ F = F(\bX) \in S $.
We introduce new variables $ (\bT) = (T_1, \ldots, T_m ) $ and consider
\begin{equation}
\label{eq:HasseSchmidt_mit_Taylor}
F( \bX + \bT ) = F( X_1 + T_1, \ldots, X_m + T_m) = 
\sum_{\ba \in \ZZ_{\geq 0}^m} F_\ba(\bX)\, \bT^\ba.
\end{equation}
The Hasse-Schmidt derivative of $ F $ by $ \bX^\ba $ is defined by the coefficient of $ \bT^\ba $ in the previous expansion,
\label{xch:atoa1}
\[
\frac{\partial}{\partial \bX^\ba} \,F (\bX) = F_\ba (\bX).
\]
For any $ \ba = (a_1, \ldots, a_m ), \bb = (b_1, \ldots, b_m )\in \ZZ_{\geq 0}^m $ and 
$ \lambda \in k $, we have 
\[
\frac{\partial}{\partial \bX^\ba} \, \lambda \bX^\bb = \lambda \binom{\bb}{\ba} \bX^{\bb-\ba},
\]	 
where $ \binom{\bb}{\ba} = \binom{b_1}{a_1} \cdots \binom{b_m}{a_m} $
and $ \binom{b}{a} = 0 $ if $ b < a $.
In particular, $ \frac{\partial}{\partial \bX^\ba} \,\bX^\ba = 1 $.

Let $ Y $ be one of the variables and $ a, b \in \ZZ_{\geq 0} $.
Since $ \binom{b}{a} \in \ZZ$, we can relate the Hasse-Schmidt derivatives to usual derivatives via the following symbolic computations: \label{Hassesymbolic}
\begin{equation}
\label{eq:HS-der_one}
\frac{\partial}{\partial Y^a} \, Y^b = 
\binom{b}{a} Y^{b-a} = \frac{b!}{a! (b-a)!} \, Y^{b-a}
= \frac{1}{a!} \left( \frac{\partial}{\partial Y} \right)^a \, Y^b.
\end{equation} 

More generally, we have, for 
$ \ba = (a_1, \ldots, a_m ) \in \ZZ_{\geq 0}^m $, that 
\[ 
\frac{\partial}{\partial \bX^\ba} 
= \frac{1}{a_1! \cdots a_m!}
\left( \frac{\partial}{\partial X_1} \right)^{a_1}
\cdots 
\left( \frac{\partial}{\partial X_m} \right)^{a_m}.
\]

\medskip 

In fact, the above construction is valid if we replace the field $ k $ by any domain $ R $.
For $ B =A[\bx] $, 
the usual derivatives by the variables $ x_i $ and the Hasse-Schmidt derivatives generate the $ B $-module
$ \mathrm{Diff}_{A} (B) $ of $ A $-linear differential operators.

Using Hasse-Schmidt derivatives, we can recall a first connection between the order and differential operators:

\begin{Lem}
	\label{Lem:ord_x(f)}
	Let $ R $ be a domain and 
	$ S = R[\bx] $, for $ (\bx) = (x_1, \ldots, x_{m}) $.
	Set $ I_0 = \langle \bx \rangle  \subset S $. 
	We have that, for every $ f \in S \setminus \{ 0 \} $,
	\[
	\label{xch:extra_space} 
	\ord_{I_0} ( f ) = \min \bigg\{ i \in \NN \, \Big|\, \exists \, \ba \in \NN^m : |\ba| = i \, 
	\wedge \,	\frac{\partial}{\partial \bx^\ba} f \notin I_0  \bigg\}.
	\]
\end{Lem} 

{Note that $ R $ is any domain, hence we switch the notation to $ R \subset S $ instead of $ A \subset B $ since the latter is reserved for $ A $ a field or a principal ideal Dedekind domain.}

\begin{proof}
	Recall that $ \ord_{I_0} (f ) =  \sup \{ i \in \NN \mid f \in I_0^i \} $.
	Set $ d := \ord_{I_0} (f ) < \infty $.
	\label{xch:SR_not_AB}Since $ \xch{S} $ is a polynomial ring over $ \xch{R} $, we have that
	$ f = \sum_{\ba} \lambda_\ba \bx^\ba $, for $ \ba \in \NN^m $ and some $ \lambda_\ba \in R $ 
	of which only finitely many are non-zero.
	By definition of the order at $ I_0 = \langle \bx \rangle $, 
	\[ 
	d = \min\{ i \in \NN \mid \exists \, \ba \in \NN^m : |\ba| = i 
	\, \wedge \,	\lambda_\ba \neq 0  \bigg\}.
	\]
	Let $ \ba_0 \in \NN^m $ be such that
	$ | \ba_0 | = d $ and $ \lambda_{\ba_0} \neq 0 $.   
	We have that 
	\[ 
	\frac{\partial}{\partial \bx^{\ba_0}} f = \lambda_{\ba_0} + \sum_{\bb \neq 0 } \lambda_{\ba_0+\bb} \binom{\ba_0+\bb}{\ba_0} \bx^\bb \notin I_0 .
	\]
	By the minimality of $ d $, the assertion follows.
\end{proof}

At a closed point, e.g., given by the maximal ideal 
$ \langle p , x_1 , \ldots, x_m \rangle $, for some prime element $ p \in A $, 
the connection with differential operators is best seen in
the associated graded ring, where $ p $ corresponds to some variable,
say $ P $.

\begin{Ex} \label{ExFibreSing}
	Consider the hypersurface $ X := V(f) \subset \mathbb{A}_\ZZ^2 =: Z $
	given by the polynomial 
	\[ 
	f = 12 - uv^2 \in \ZZ[u,v] .
	\]
	Since $ 12 =  3 \cdot (1 + 3) $ the order of $ f $ at the
	maximal ideal $ \fm := \langle 3 ,u , v \rangle $ is one.
	Therefore $ X $ is not singular at the point corresponding to $ \fm $,
	although $ X $ is singular in the fiber modulo $ 3 $.
	The initial form of $ f $ with respect to $ \fm $ is 
	\[
	in_\fm ( f ) = P \in k_\fm [P, U, V],
	\]
	where $ P := 3 \mod \fm^2 $, $ U := u \mod \fm^2 $, and $ V := v \mod \fm^2 $, and $ k_\fm = \ZZ/3 $.
	As we can see the derivative of $ in_\fm ( f ) $ by $ P $ is
	non-zero.

	On the other hand, $ 12 = 2^2\cdot (1 + 2) $ and at the point 
	corresponding to the maximal ideal 
	$ \fn := \langle 2 ,u , v \rangle $, we have
	$ \ord_\fn ( f ) = 2 $.
	The initial form is
	\[
	F := in_\fn ( f ) = Q^2 \in k_\fn [Q, U, V],
	\]
	where $ Q {\,:=\,} 2 \mod \fn^2 $ ($ U, V $ analogous to above)
	and $ k_\fn = \ZZ/2 $. 
	The derivative by $ Q $ is $ 2 Q = 0 $ in the graded ring. 
	(We also introduced the letter $ Q $ for $ 2 \mod \fn^2 $ 
	in order to avoid confusion between the powers of the
	element $2 \mod \fn^2$ and a factor $ 2 = 0 \in k_\fn $).
	Of course, we have 
	$ \frac{\partial}{\partial Q} ( \frac{\partial}{\partial Q}  F ) = 0 \in \mathfrak{n} $,
	but we have that $  \frac{\partial}{\partial Q^2} Q^2 = 1 \notin \mathfrak{n}$.
	This shows the connection between the order of $ f $ at $ \fn $
	and differential operators.
\end{Ex}

Nonetheless, we need also so called {\em derivatives by constants},
i.e., with respect to elements in the field itself,
in order to compute the locus of maximal order. 
This is related to the notion of $ p $-bases
(see \cite{EGA}, 0$_{\mathrm{IV}}$ (21.1.9), or \cite{BernhardThesis} section 2.2).

\begin{Def}
	Let $ S $ be a ring containing a field of characteristic $ p $
	and $ e \in \NN$, $ e > 0 $.
	A family $ (s_i)_{i\in I} $ in $ S $ is called 
	{\em $ p^e $-independent} (resp.~a $ p^e $-basis) of $ S $ if the 
	family of monomials $ \bs^\ba $ with $ \ba \in \NN^{(I)} $, 
	$ 0 \leq a_i < p^e $ ($ i\in I $),
	is a free family (resp.~a basis) for the $ S^{p^e} $-module $ S $.
\end{Def}

Note that the family $ (s_i)_{i\in I} $ is not necessarily finite.
If the Frobenius $ F: S \to S $, $ b \mapsto b^p $,
is injective (e.g., if $ S $ is a domain) 
and $ (s_i)_{i\in I} $ is a $ p $-basis,
then the monomials $ \bs^\ba $ with $ 0 \leq a_i < p^e $ form a
basis for $ \xch{S} $ considered as $ \xch{S}^{p^e} $-module 
(\cite{BernhardThesis} Remark (2.2.3)). \label{xch:BtoS}

If $ S = k[X_1, \ldots, X_m ] $ is a polynomial ring over a field $ k $
and if $ (\lambda_i)_{i\in I} $ is a $ p^e $-basis for $ k $, 
then the family 
$ (\lambda_i, X_j )_{ i\in I, \, j \in \{1, \ldots, m \} } $ 
is a $ p^e $-basis for $ S $
(\cite{BernhardThesis} Remark (2.2.7)).

Two crucial results are 

\begin{Prop}[\cite{BernhardThesis} Proposition (2.2.5)]
	\label{Prop:Diffoperators}\label{xch:atoa2}
	Let $ S $ be a ring containing a field of characteristic $ p $.
	Let $ ( s_i )_{i\in I} $ be a $ p $-basis of $ S $
	and suppose that the Frobenius $ F : S \to S $ is injective.
	For every multiindex $ \ba \in \NN^{(I)} $, there exists 
	$ \cD_\ba \in \mathrm{Diff}^{\leq |\ba|}_\ZZ (S) $ with
	\[
	\cD_\ba ( \bs^\bb) = \binom{\bb}{\ba} \, \bs^{\bb -  \ba },
	\]
	for all $ \bb \in \NN^{(I)} $.
	Furthermore, every $ \cD \in \mathrm{Diff}_\ZZ^{\leq m}(S) $ is a
	(possibly infinite) sum
	\[
	\cD= \sum_{|\ba| \leq m } c_\ba \cD_\ba 
	\]
	with unique coefficients $ c_\ba \in S $.
\end{Prop}

\begin{Lem}[\cite{GiraudEtude} Lemma 1.2.3, p.~III-4
	(cf.~\cite{BernhardThesis} Lemma (2.3.1)]
	\label{Lem:Diff_op_on_ideal_power}
	Let $ R $ be a ring,
	$ S $ be an $ R $-algebra, and $ \cD\in \mathrm{Diff}^{\leq i }_R (S) $.
	For an ideal $ I \subseteq S $ and $ \ell \geq i $, we have
	\[
	\cD (I^\ell) \subseteq I^{\ell-i}.
	\]
\end{Lem}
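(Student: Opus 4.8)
The plan is to reduce to the case $\ell = i$ by an easy induction and then to prove that a differential operator of order $\le i$ sends $I^i$ into $S$ (trivially) — the real content being that it sends $I^{i}$ into $I^{0}=S$ is vacuous, so instead I will argue directly that $\cD(I^\ell)\subseteq I^{\ell-i}$ by induction on $i$, the order of $\cD$. The base case $i=0$ is immediate: $\mathrm{Diff}^{\le 0}_R(S)$ consists of multiplication by elements of $S$, so $\cD(I^\ell)\subseteq I^\ell = I^{\ell - 0}$. For the inductive step, I will use the standard characterization of differential operators of order $\le i$: $\cD\in\mathrm{Diff}^{\le i}_R(S)$ if and only if for every $s\in S$ the commutator $[\cD, s] := \cD\circ s - s\circ \cD$ (where $s$ denotes multiplication by $s$) lies in $\mathrm{Diff}^{\le i-1}_R(S)$. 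This is in fact the inductive definition of the order filtration on $\mathrm{Diff}_R(S)$.

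Granting that characterization, I would proceed as follows. Fix $\ell \ge i$ and take an arbitrary generator-type element of $I^\ell$, namely a product $g = s_1 s_2 \cdots s_\ell$ with each $s_j \in I$ (it suffices to check the containment on such products since they generate $I^\ell$ as an $S$-module, and $\cD$ is additive and $R$-linear — but note $\cD$ need not be $S$-linear, so one must be a little careful and really work with products). Write $g = s_1 \cdot g'$ with $g' = s_2\cdots s_\ell \in I^{\ell-1}$. Then
\[
	\cD(g) = \cD(s_1 g') = [\cD, s_1](g') + s_1 \cdot \cD(g').
\]
By the characterization, $[\cD,s_1]\in\mathrm{Diff}^{\le i-1}_R(S)$, so by the induction hypothesis applied with order $i-1$ and exponent $\ell-1$ (valid since $\ell - 1 \ge i - 1$) we get $[\cD,s_1](g') \in I^{(\ell-1)-(i-1)} = I^{\ell-i}$. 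For the second term, $\cD(g')\in S$ and $s_1 \in I$, so $s_1\cdot\cD(g') \in I \subseteq I^{\ell-i}$ provided $\ell - i \le 1$; in general we need $\cD(g') \in I^{\ell - 1 - i}$, which again follows from the induction hypothesis on $i$ (same order $i$, smaller exponent $\ell-1$) — wait, this requires $\ell - 1 \ge i$, i.e. $\ell > i$. When $\ell = i$ the term $s_1\cdot\cD(g')$ with $g'\in I^{i-1}$ lands in $I\cdot S = I = I^{\ell-i+? }$; here $\ell - i = 0$ so $I^{\ell-i} = S$ and the containment is automatic. So I will split the inductive step into the two cases $\ell = i$ (where the conclusion $\cD(I^\ell)\subseteq S$ is trivial) and $\ell > i$ (where both terms are handled by induction as above). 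Finally, a general element of $I^\ell$ is an $S$-linear combination $\sum_k t_k g_k$ of such products; since $\cD$ is only $R$-linear I instead observe that $\sum_k t_k g_k$ is itself, after absorbing, a sum of products of $\ell$ elements of $I$ (writing $t_k g_k$ as a product $(t_k s_{k,1})s_{k,2}\cdots s_{k,\ell}$ when $g_k = s_{k,1}\cdots s_{k,\ell}$, using $t_k s_{k,1}\in I$), so additivity of $\cD$ suffices.

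The main obstacle is purely bookkeeping: one must not assume $\cD$ is $S$-linear, so the reduction to checking the statement on generators of $I^\ell$ has to be phrased in terms of products $s_1\cdots s_\ell$ of elements of $I$ rather than $S$-module generators, and the commutator trick has to be iterated carefully so that at each peel-off step the remaining factor still lies in the correct power of $I$. Everything else is a routine double induction on $(i,\ell)$, and the only external input is the commutator characterization of the order filtration on $\mathrm{Diff}_R(S)$, which is standard (and is implicit in the way Hasse–Schmidt derivatives were introduced above).
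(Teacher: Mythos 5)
The paper does not prove this lemma itself; it is quoted from Giraud's notes (and Dietel's thesis), so there is no in-paper argument to compare against. Your proposal is a correct and complete proof. The double induction on $(i,\ell)$ is the standard argument: the base case $i=0$ reduces to $\mathrm{Diff}^{\le 0}_R(S)=S$ acting by multiplication; the inductive step peels off one factor $s_1\in I$ from a product $g=s_1g'$ with $g'\in I^{\ell-1}$, writes $\cD(s_1g')=[\cD,s_1](g')+s_1\cD(g')$, and handles the commutator term via the outer induction on the order (since $[\cD,s_1]\in\mathrm{Diff}^{\le i-1}_R(S)$ by Grothendieck's inductive definition of the order filtration) and the second term via the inner induction on $\ell$, with the degenerate case $\ell=i$ being the trivial $\cD(I^i)\subseteq S$. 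Your care on two points is exactly what makes the argument go through: (1) reducing to monomial generators $s_1\cdots s_\ell$ with each $s_j\in I$ and absorbing the $S$-coefficient into the first factor, since $\cD$ is only additive and $R$-linear, not $S$-linear; and (2) noting that the inner induction requires $\ell>i$ and treating $\ell=i$ as the base case. One very minor stylistic remark: the opening paragraph starts with a red-herring plan (``reduce to $\ell=i$''), which you then correctly discard; in a final write-up simply start from the double induction.
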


\begin{Ex}
	Let $ S = k[X,Y] $, where $ k $ is a non-perfect field of characteristic two.
	Consider $ F = X^2 - \lambda Y^2 $ with $ \lambda \in k \setminus k^2 $.
	The element $ \lambda $ can be extended to a $ 2 $-basis for $ k $.
	We fix a $ 2 $-basis $ (\lambda_i)_{i\in I} $ of $ k $ containing $ \lambda $.
	Then we can speak about the derivative $ \frac{\partial}{\partial \lambda } $.
	
	The maximal order of $ F $ is $ 2 $ and we have
	\[
	\frac{\partial}{\partial \lambda} (X^2 - \lambda Y^2 ) = Y^2.
	\]
	Since all other
	{first order}
	 derivatives are zero, this implies that the locus of maximal order is the closed point $ V ( X, Y ) $.
\end{Ex}

We now come to the computation of the maximal order locus.

\begin{Construction}
	\label{Constr:max_locus}
	Let $ J \subset B = A [\bx ] $ be a non-zero ideal, where $ A $ is a field 
	or a principal ideal Dedekind domain.
	Set $ X := V(J) $.
	Let 
	\[
	d := \max \{ \ord_x (X) \mid x \in X \} \geq 1 .
	\]
	For $ d = 1 $, the locus of maximal order is $ X $.
	Hence let us suppose $ d > 1 $.
	
	If $ B $ contains a field $ k $, $ k \subseteq B $, then, using Proposition \ref{Prop:Diffoperators} and Lemma \ref{Lem:Diff_op_on_ideal_power},
	we get that the maximal order locus for $ X = V(J) $ is given by 
	\[
	\Delta^{d-1} (J) := V ( \cD f \mid f \in J, \cD \in \mathrm{Diff}_\ZZ^{\leq d-1} (B) ).
	\]
	
	Suppose $ B $ does not contain a field 
	(i.e., $ A $ is of mixed characteristic).
	Let $ F := \mathrm{Frac}(A)  $ be the field of fractions of $ A $ and set
	$ B_F := B \otimes_A F =  F[\bx] $.
	If the maximal order of $ V(J \cdot B_F ) $ is equal to $ d $, then
	$ \Delta^{d-1} (J\cdot B_F) $ provides the horizontal irreducible components
	of the locus of maximal order of $ X  = V(J) $.
	
	For the vertical components, let $ p \in A $ be a prime element.
	We pass from $ B = A [\bx ] $ to $ B_p :=
	B \otimes_A \gr_p(A) =  (\gr_p(A))[\bx] $.
	An element $ \sum f_\ba \,\bx^\ba \in B $ is mapped to $ \sum F_\ba \, \bx^\ba \in B_p $,
	where $ F_\ba = F_\ba(P) = \ini_p (f_\ba) \in \gr_p (A) \cong k_p [P] $
	with $ P := \ini_p (p) = p \mod p^2 $ 
	and $ k_p := A/p $ the residue field.
	Note that $ {B_p} \cong k_p[P,\bx ] $ is a polynomial ring over the field $ k_p $. 
	If the maximal order of $ V(J \cdot B_p ) $ is equal to $ d $, then
	$ \Delta^{d-1} (J\cdot B_p) $ provides the vertical irreducible components
	of the locus of maximal order of $ X  = V(J) $ 
	{lying above $ p $}.
	
	Applying this procedure for every prime $ p \in A $, we eventually 
	obtain $ \MaxOrd(X) $. 
\end{Construction}

\begin{Rk}
	\phantomsection\label{Rk:be_careful_diff_op}
	\begin{enumerate}
		\item 
		Since the max order locus has only finitely many irreducible components, 
		there are only finitely many primes that need to be considered. 
		An essential point in implementing the resolution algorithm is to detect
		these "interesting primes".
		In the next section we address this issue for $ A = \ZZ $.
		\item 
		The case $ A = \ZZ$ also explains why we need to pass to $ \gr_p (A) $.
		Let $ p \in \ZZ$ be any prime number, e.g., $ p = 2 $.
		There cannot exist a derivation by $ 2 $ since every derivation is $ \ZZ$-linear.
		Assume there exists $ \cD \in \mathrm{Diff}_\ZZ(B) $ being the derivation by $ 2 $.
		Then $ \cD$ certainly has to fulfill $ \cD(2) = 1 $.
		But the $ \ZZ$-linearity implies
		\[
		1 = \cD(2) = 2 \cD(1) = 2 \cdot 0 = 0 .
		\ \ \  \ \ \ - \ \ \ 
		\mbox{Contradiction.}
		\]	 
		
		\item 
		\xch{In fact, the method in the previous construction can be applied to compute the locus of all points of order at least $ d' $,
		for any $ d' \in \ZZ_+ $ instead of $ d $.}
	\end{enumerate}
\end{Rk}

Using Construction \ref{Constr:max_locus}, we can describe how to determine
the locus of maximal refined order $ \MaxNu(X) $ of $ X $.
As before, we reduce to the affine case and then glue the components.

\begin{Construction}
	\label{Constr:refined_max_locus}
	Let $ J \subset B = A [\bx ] $ be a non-zero ideal, where $ A $ is a field 
	or a principal ideal Dedekind domain.
	Set $ X = V(J) $ {and $ Z = \Spec(B) $}.
	Suppose 
	\[ 
	\maxNu(X) = ( \alpha , \delta  )  \in \NN^2.
	\]
	We provide an inductive construction of $ \MaxNu(X)$ depending on 
	 \[
	a := a(Z) := N - \alpha = \dim(Z) - \alpha  \geq 0
	\ \ \ \ \ \ \ 
	\mbox{(recall Definition \ref{Def:nut})}.
	\]
	If $ a = 0 $, then $ \MaxNu(X) $ coincides with the locus of maximal order $ \MaxOrd(X) $ and 
	we may apply Construction \ref{Constr:max_locus}.
	
	Suppose $ a (Z) \geq 1 $.
	Then, $ J $ is of order one at every point of $ X $.
	In this case, it is possible to descend in the dimension of the ambient space locally at every point.
	Hence, for every point $ q $ in $ \Spec(B) $, there exists a differential operator $ \partial  = \partial(q) $ such that $ (\partial J )\cdot B_{I_q} = B_{I_q} $, where $ \partial J = \langle \partial h \mid h \in J   \rangle$
	and $ B_{I_q} $ denotes the localization of $ B $ at the ideal of $ q $.
	{(Note that, if necessary, we pass from $ B $ to $ B_p $ as in the previous construction).}
	
	Let $ ( f_1, \ldots, f_r ) $ be a {standard basis of} $ J $.
	The latter implies that 
	{there is} a (refined) {finite} open covering of $ X $, 
	{$ X \subset \bigcup_\ell U_\ell $, such that in each $ U_\ell $, we have that} 
	\[
	X_\ell := X \cap U_{\ell} \subset V(f_{j(\ell)}) \cap U_{\ell} =: Z_{\ell},
	\]
	{and $ V(f_{j(\ell)}) $ is regular in $ U_\ell $, for some $ j(\ell) \in \{1, \ldots, r \} $.
		\xch{Let us discuss how to detect such a hypersurface of order one.}\label{xch:indicateonjective} 
		If $ A $ is a perfect field, we can apply the Jacobian criterion for each generator of $ J $ separately and consider the derivatives with respect to the variables $ ( \bx ) $.
		If $ A $ is a non-perfect field, we apply Zariski's Jacobian criterion (\cite{Zariski} Theorem 11),
		which involves a $ p $-basis for $ A $ and uses that there are only finitely many non-zero coefficients appearing in each generator $  f_1, \ldots, f_r $.
		If $ A $ is a principal ideal Dedekind domain that is not a field, there are only finitely many primes $ p_1, \ldots, p_\rho \in A $ appearing in the coefficients of $ f_i $ for each $ i $ and
		for every $ j \in \{ 1, \ldots, \rho \} $, we proceed as follows:
		analogous to Construction~\ref{Constr:max_locus},
		we pass from $ B $ to $ B_{p_j} :=
		B \otimes_A \gr_{p_j}(A) \cong k_{p_j}[P,\bx] $,
		where $ k_{p_j} := A/p_j $ is the residue field at $ p_j $ and $ P $ is a new variable. 
		Then, we apply the Jacobian criterion to detect the desired hypersurface, 
		now also considering the derivative by $ P $.}

	{By Proposition~\ref{Prop:A-D_for_nut}, we have $ \maxNu(X_\ell) \leq \maxNu(X) $.
	If the inequality is strict, we may discard $ {U_\ell} $.
	Otherwise, we have $ \maxNu(X_\ell) = (\alpha, \delta) $ and}	
	since $ \dim(Z_\ell) = \dim(Z) - 1 $,
	we {get} $ a(Z_\ell) = a(Z) - 1 $ 
	(since $ \nut $ and hence $ \maxNu(X) $ does not depend on the embedding).
	By induction, we know how to determine $ \MaxNu (X_{\ell} ) $.
	By gluing together the relevant affine pieces, we obtain then $ \MaxNu(X) $.
\end{Construction}	

Finally, if $ \OmaxNu(X) = ( \alpha , \delta, \sigma  ) \in \NN^3 $, we can apply Construction \ref{Constr:vonMaxnachMaxO} to obtain from $ \MaxNu(X) $ the locus of maximal log-refined order $ \OMaxNu(X) $.

\begin{Obs}
	For practical reason, we like to point out some facts about computing the maximal locus after blowing up:
	Let $ X \subset Z $ be as before.
	Suppose $ \Max(X) \subset X $ is a locus that we may compute from $ X $
	(e.g., $ \Max(X) = \OMaxNu (X) $, or $ \Max(X) $ being the (maximal)
	Hilbert-Samuel locus of $ X $, cf.~\cite{CJS}, Definition \xch{2.35}). 
	
	Let $ \pi : Z' = \Bl_D(Z) \to Z $ be the blowing up in some regular center {$ D \subsetneq \Max(X) $} and denote by $ X' $ the strict transform of $ X $ under $ \pi $.
	Set $ E:= \pi^{-1} (D) $, the exceptional divisor of the blowing up.
	The locus $ \Max(X') $ consists of two types of irreducible components:
	
	\smallskip 
	
	(1) 
	The strict transform $ Y ' $ of $ Y := \Max(X) $ is contained in $ \Max(X') $.

	(2) 
	\parbox[t]{12.5cm}{
	We may have created new components.
	But we know that each of them has to be contained in the exceptional divisor $ E $.}	
	
	\smallskip 
	
	\noindent 
	Since we computed $ Y $ before, we can simply determine its strict transform under $ \pi $ 
	in order to obtain the components of $ \Max(X') $ of type (1).
	Therefore it is only left to determine the new components, i.e., those of type (2).
	The fact that these are contained in $ E $ then helps to simplify the local computations. 
	{The latter is also useful when $ D = \Max(X) $ is the entire locus,
		in which case there is no component of type (1) in $ \Max(X') $ and one has to check whether $ X' $ strictly improved compared to $ X $.}   
\end{Obs}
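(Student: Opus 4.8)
The plan is to establish both assertions simultaneously, and for \emph{any} locus $ \Max $ of the type allowed in the statement, by isolating the only three properties that get used: $ \Max(X)=\{\,x\in X\mid \mu(x)\geq \mu_{\max}(X)\,\} $ for an upper semi-continuous map $ \mu $ (so $ \Max(X) $ is closed), the map $ \mu $ does not increase under the blowing up $ \pi $, and $ \mu $ is local on $ X $, hence is preserved by any isomorphism of a neighbourhood of a point of $ X $ together with its boundary $ \cB $. For $ \mu=\Onut $ these are precisely Proposition~\ref{Prop:A-D_for_nut_O} together with the definition of $ \Onut $, and for the Hilbert--Samuel locus they are classical (cf.~\cite{CJS}). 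We may assume $ \OmaxNu(X')=\OmaxNu(X) $, i.e.\ $ \mu_{\max}(X')=\mu_{\max}(X) $; otherwise the singularity has strictly improved and, as in Construction~\ref{CJS-Constr}, the process restarts with the new maximal value, so there is nothing to describe. Write $ Y:=\Max(X) $ and recall that $ \pi $ restricts to an isomorphism $ Z'\setminus E\cong Z\setminus D $ under which $ X'\setminus E $ corresponds to $ X\setminus D $ and $ \cB'\setminus E $ to $ \cB\setminus D $ (the exceptional divisor $ E $ is the only component of $ \cB' $ that could meet $ Z'\setminus E $, and it does not). Consequently $ \mu(x')=\mu(\pi(x')) $ for every $ x'\in X'\setminus E $; this identity drives both parts.

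For part~(1): let $ Y_0 $ be an irreducible component of $ Y $ whose strict transform $ Y_0' $ is non-empty (if $ Y_0'=\emptyset $ there is nothing to prove for that component). Then $ D\cap Y_0\subsetneq Y_0 $, the set $ \pi^{-1}(Y_0\setminus D) $ is contained in $ Y_0'\setminus E $ and is dense in $ Y_0' $ by definition of the strict transform, and $ \pi $ maps it isomorphically onto $ Y_0\setminus D $, where $ \mu\equiv\mu_{\max}(X) $. By the identity above, $ \mu\equiv\mu_{\max}(X)=\mu_{\max}(X') $ on $ \pi^{-1}(Y_0\setminus D) $, so $ \pi^{-1}(Y_0\setminus D)\subseteq\Max(X') $; since $ \Max(X') $ is closed and $ \pi^{-1}(Y_0\setminus D) $ is dense in $ Y_0' $, we obtain $ Y_0'\subseteq\Max(X') $. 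Taking the union over all components gives $ Y'\subseteq\Max(X') $.

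For part~(2): let $ W $ be an irreducible component of $ \Max(X') $ with $ W\not\subseteq E $, so that $ W\setminus E $ is open and dense in $ W $. For each $ x'\in W\setminus E $ we have $ \pi(x')\notin D $ and, by the identity above, $ \mu(\pi(x'))=\mu(x')=\mu_{\max}(X')=\mu_{\max}(X) $; hence $ \pi(x')\in\Max(X)=Y $. Thus $ \pi(W\setminus E) $ is an irreducible subset of $ Y\setminus D $, so its closure is contained in a single irreducible component $ Y_0 $ of $ Y $, and since $ \pi(W\setminus E) $ avoids $ D $ it lies in $ Y_0\setminus D $. Therefore $ W\setminus E\subseteq\pi^{-1}(Y_0\setminus D)\subseteq Y_0' $, whence $ W=\overline{W\setminus E}\subseteq Y_0' $. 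By part~(1), $ Y_0' $ is an irreducible closed subset of $ \Max(X') $ containing the component $ W $, so $ W=Y_0' $. This yields the asserted dichotomy: every irreducible component of $ \Max(X') $ is either one of the strict transforms $ Y_0' $ (type~(1)) or is contained in $ E $ (type~(2)); and a strict transform is never contained in $ E $, so the two types do not overlap.

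The only step demanding care is the identity $ \mu(x')=\mu(\pi(x')) $ for $ x'\notin E $. For $ \mu=\Onut $ one must check that the old/new bookkeeping is unaffected away from the center: as $ E $ is the only boundary component of $ \cB' $ not already in $ \cB $ and $ x'\notin E $, no relabelling of components occurs at $ x' $, so $ |O(x')|=|O(\pi(x'))| $, while $ \nut(x')=\nut(\pi(x')) $ because $ \nut $ is independent of the embedding and $ (Z',X') $ is locally isomorphic to $ (Z,X) $ at $ x' $; for the Hilbert--Samuel locus one invokes the invariance of $ H_X $ under that same local isomorphism. Everything else is elementary topology of blowing up --- the isomorphism $ Z'\setminus E\cong Z\setminus D $, the density of $ \pi^{-1}(Y_0\setminus D) $ in the strict transform $ Y_0' $, and the closedness of $ \Max(X') $ --- together with the monotonicity of $ \mu_{\max} $ that lets us reduce to the stable case $ \mu_{\max}(X')=\mu_{\max}(X) $ at the outset.
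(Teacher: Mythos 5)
The paper states this Observation without a proof; it is intended as a practical remark justifying the implementation strategy, so there is no authors' argument to compare against word-for-word.

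Your proof is correct and, in spirit, is almost certainly the argument the authors had in mind: the entire Observation reduces to the fact that $\pi$ restricts to an isomorphism of triples $(Z'\setminus E,\,X'\setminus E,\,\cB'\setminus E)\cong(Z\setminus D,\,X\setminus D,\,\cB\setminus D)$, so any locally determined invariant is preserved there, which immediately gives both $Y'\subseteq\Max(X')$ (closure + density of $\pi^{-1}(Y_0\setminus D)$ in $Y_0'$) and the containment of genuinely new components in $E$. The abstraction to the three properties (upper semi-continuity, non-increase under weakly permissible blow-up, locality) is a clean way to cover both $\OMaxNu$ and the Hilbert--Samuel locus uniformly, and you correctly flag the one non-obvious point: that the old/new bookkeeping for $\Onut$ is stable at points off $E$. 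Two small imprecisions are worth recording. First, part~(1) actually holds unconditionally: if $Y'\neq\emptyset$ then some $Y_0\not\subseteq D$, so the preimage of $Y_0\setminus D$ already forces $\mu_{\max}(X')=\mu_{\max}(X)$, and if $Y'=\emptyset$ the claim is vacuous --- so the reduction to the stable case is only needed for part~(2). Second, your identity $\Max(X)=\{x:\mu(x)\geq\mu_{\max}(X)\}$ fails for $\OMaxNu$ in the degenerate case $\delta=1,\sigma=0$ of Definition~\ref{DefLogRefinedOrder}, where $\OMaxNu(X)$ is declared to be all of $X$; this case corresponds to $X$ being already resolved, so it never arises when the Observation is actually invoked, but it is worth saying that the argument presupposes $\delta>1$ or $\sigma>0$.
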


\section{Algorithmic formulation and examples}
\label{algs}

In the previous sections the theoretical side of the 
desingularization algorithm by Cossart-Jannsen-Saito has been outlined
to give us a solid basis for implementation aspects. We have seen that the 
main loop is a sequence of blowing ups of the arithmetic surface at suitably 
chosen centers. 

From the computational point of view, it is standard to 
represent the given schemes by means of a covering with affine charts, i.e.,
as ideals in a polynomial ring. In each of these charts, we then encounter
a situation $I_Z \subset I_X \subset I_Y \subset \xch{A}[\underline{x}]$ at the beginning and
with the boundary $ \cB$ on $Z$ represented by 
$I_Z \subset I_{\cB} \subset \xch{A}[\underline{x}]$ in later steps of the 
construction. 
\label{xch:R_to_A}

In general, the underlying base ring $\xch{A}$ can be any \xch{principal ideal} Dedekind 
domain or field, as long as the arithmetic operations are practically 
computable; for the purpose of this section, however, we shall 
concentrate on the case of $\xch{A}={\mathbb Z}$.
As the choice of centers is controlled by 
the locus of maximal value of $\Onut$, intersections of charts and patching 
are not an issue in this process. Moreover, blowing up at a given regular 
center is a well-understood standard technique, see e.g. \cite{Sin} or 
\cite{FK1}. 
We can hence focus completely on the practical aspects of the choice of 
centers in this section.

\begin{Rk} 
	As we have seen before, the key ingredient for the choice of the center is 
	the computation 
	of $\OMaxNu(X)$ which was discussed from the theoretical point of view in the 
	last section. 
	More precisely, the center is constructed as follows:
	
	\smallskip 
	
	\noindent 
	{\bf Step 1:} 
	If the maximal order of $X$ is one, the {scheme} can locally be 
	embedded into a regular hypersurface and it thus suffices to find an open 
	covering such that the use of a single such hypersurface on an open set is 
	possible. 
	The equation of this hypersurface can then be added to the 
	generators of the ideal of the ambient space $Z$. 
	Iterating this process, we descend in ambient space as long as 
	we have not reached to minimal appearing
	$a_x$ at any point $x \in X$. 
	At these points the first entry of the invariant
	$N-a_x$ attains a maximal value.
	(Cf.~Constructions~\ref{Constr:max_locus} and \ref{Constr:refined_max_locus}).
	
	\smallskip

	\noindent 
	{\bf Step 2:}
	Again computing the locus of maximal order of $X$ with the new
	ambient space $Z$ resulting from step 1, we reach the locus of maximal refined
	order, i.e., the points where $ \nut(x)=(N-a_x,d_x)$ has maximal value.
	
	\smallskip

	\noindent 
	{\bf Step 3:}
	By taking into account the exceptional divisors, we 
	obtain the locus of maximal log-refined order
	(cf.~Construction \ref{Constr:vonMaxnachMaxO}).

	\smallskip

	\noindent 
	{\bf Step 4:}
	We label the irreducible components of $ \OMaxNu(X) $ using the history of the resolution process
	and detect the locus of components which have smallest label
	(cf.~Construction \ref{CJS-Constr} and the remark after it).
	Since $ X $ is reduced and of dimension two, the latter components have at most dimension one.
	Hence, we either blow up the entire smallest label locus, or we have to prepare it to become \good by blowing up closed points.
	
	\smallskip 
	
	Step 1 and {Step 3} are standard techniques that 
	already appear over fields of characteristic zero 
	(cf.~\cite{BEV}).
	Moreover, they have been realized in implementations 
	(see \cite{BSch}, \cite{ResLIB}) and it requires at most minor modifications to adapt the algorithms to our situation.
	No special features of positive or mixed characteristic appear in these steps.
	The same is true for the labeling process in Step 4 which is of combinatorial
	nature and only requires some diligent book keeping. 
\end{Rk}

Therefore, we completely focus on the algorithm to determine the locus of maximal order in the arithmetic setting, see Algorithm \ref{MaxOrdAlgArith}.
The approach presented in the previous section is purely theoretical as
it requires local computations at each point. 
The analogous problem already occurs in the by far simpler case of
desingularization over a field of characteristic zero 
and we mimic the approach taken there. 
To this end, we first state the algorithm in characteristic zero 
before developing a similar approach in the arithmetic case, see
\underline{Algorithm \ref{MaxOrdChar0}: 
	MaxOrd (char $K = 0$)
	(p.\pageref{MaxOrdChar0})}.
(In there, we denote by $ D(h) $ the principal Zariski open set determined by $ h \in  \QQ[x_1, \ldots, x_n] $).

\begin{algorithm}[h]    
	\caption{MaxOrd (char $K = 0$)}
	\label{MaxOrdChar0}
	
	\begin{algorithmic}[1]
		
		\REQUIRE  $I_Z \subset {\mathbb Q}[x_1,\dots,x_n]={\mathbb Q}[\underline{x}]$ such that $ Z:=V(I_Z) $ is equidimensional and regular,\\
		$I_X \subsetneq {\mathbb Q}[\underline{x}]$ such that $I_Z \subset I_X$, i.e., $ \emptyset \neq X:=V(I_X) \subset Z $
		
		\ENSURE  $ (m,I)$, where $m = \maxOrd(X)$ and
		$I$ ideal describing $\MaxOrd(X)$
		
		\STATE $I_{temp}=I_X$
		\IF {$I_Z == \langle 0 \rangle$}
		\STATE \xch{$I_{max} = \langle 1 \rangle $, $maxord = 0$} \label{xch:Algo:1}
		\WHILE {$I_{temp} ${\,!=\,}$ \langle 1 \rangle$} 
		\STATE $I_{max} = I_{temp}$
		\STATE $I_{temp} = I_{temp} + 
		\langle \frac{\partial f_i}{\partial x_j} \mid
		1 \leq j \leq n\;\;{\rm and}\;\; 1 \leq i \leq r \rangle$,
		where $f_1,\dots, f_r$ generate $I_{temp}$
		\STATE $maxord = maxord +1$
		\ENDWHILE
		\RETURN ($maxord$, $I_{max}$)
		
		\ENDIF	
		\STATE $L$ = \{$\operatorname{codim}(Z)$ square submatrices of Jacobian 
		matrix of $I_Z$\}
		\STATE choose a subset $L_1 \subset L$ such that 
		$X \subset \bigcup_{M \in L_1} D(\operatorname{det}(M))$
		\STATE $maxord=1$, $thisord=0$, 
		$I_{max}= \langle 0 \rangle$
		\FOR {$M \in L_1$}
		
		\STATE denote by $y_1,\dots,y_s$ the system of parameters 
		on $Z \cap D(\operatorname{det}(M))$ induced by variables not 
		corresponding to columns of $M$ 
		\STATE $I_{temp}=I_X$ 
		\WHILE {$ I_{temp} + I_Z  ${\,!=\,}$ \langle 1 \rangle$} 
		\STATE $I_{old}=I_{temp}$
		\STATE $I_{temp}= I_{temp} + 
		\langle \frac{\partial f_i}{\partial y_j} \mid
		1 \leq j \leq s\;\;{\rm and}\;\; 1 \leq i \leq r\rangle$,\\
		where $f_1,\dots, f_r \in {\mathbb{Q}}[\underline{x}] $ correspond to generators of $I_{temp} \cdot  {\mathbb Q}[\underline{x}]/I_Z $
		\\         
		(precise formulation of $ \frac{\partial f_i}{\partial y_j}$
		in Remark \ref{NachDiff})
		\STATE $I_{temp} = $ sat($I_{temp},\operatorname{det}(M)$)
		\STATE $thisord=thisord+1$
		\ENDWHILE
		\IF {$thisord >= maxord$}
		\IF {$thisord == maxord$}
		\STATE $I_{max} = I_{max} \cap I_{old}$
		\ELSE    
		\STATE $maxord=thisord$, $I_{max}=I_{old}$
		\ENDIF
		\ENDIF
		\STATE $thisord=0$
		
		\ENDFOR     
		\RETURN ($maxord$, $I_{max}$)    
	\end{algorithmic}
\end{algorithm}

\smallskip 

Note that we only require that $ Z $ is equidimensional and not necessarily irreducible. 
Since $ Z $ is regular, all irreducible components are disjoint.
From a theoretical viewpoint, it suffices to solve the resolution problem on each component separately and
hence one may reduce to the case $ Z $ irreducible.
In praxis, we may have to deal with $ Z $ having several irreducible components if we pass to a smaller dimensional ambient space as in Construction \ref{Constr:refined_max_locus}.

\begin{Rk}[\cite{BFK} Remark 3.3] \label{NachDiff} 
	As the minor $\operatorname{det}(M)$ is only invertible on
	\xch{the principal open set}\label{xch:prin_open_E_codim} 
	$D(\operatorname{det}(M))$
	(from step 12 of Algorithm \ref{MaxOrdChar0} onwards),
	the differentiation in step 16 is more subtle than it seems at first glance. 
	We start by determining a square matrix $A$ satisfying
	\[
	A \cdot M = q \cdot E_{\operatorname{codim}(Z)},
	\]
	where $q := \operatorname{det}(M)$ \xch{and $ E_{\operatorname{codim}(Z)} $ is the unit matrix of size $ \operatorname{codim}(Z) $}.
	On $D(q)$, $\frac{1}{q} \cdot A$ is precisely the inverse matrix of $M$. As
	system of parameters $y_1,\dots,y_s$, we use the one induced by 
	$\{x_i \mid i\;\; {\rm not \;\; a \;\; column \;\; in} \;\; M\}$. 
	Let $g_1,\dots,g_t \in {\mathbb{Q}}[\underline{x}]$ be a set of generators for $ I_Z $.
	Further, we choose a set of 
	generators 
	$\overline{f_1},\dots,\overline{f_r} \in {\mathbb Q}[\underline{x}]/I_Z$ for
	the ideal $I_X \cdot {\mathbb Q}[\underline{x}]/I_Z$ and choose 
	representatives $f_1,\dots,f_r \in {\mathbb Q}[\underline{x}]$ for these.
	For simplicity of presentation, we assume in the next formula that $M$ 
	involves precisely the last columns of the Jacobian matrix so that the 
	indices of $y_i$ and the corresponding $x_i$ coincide. Then the chain rule 
	provides the following derivatives:
	\begin{equation}
	\label{eq:NachDiff}
	q \cdot \frac{\partial f_i}{\partial y_j} =
	q \cdot \frac{\partial f_i}{\partial x_j} -
	\sum\limits_{\mbox{\footnotesize $ k $ column of $ M $} \atop 
		\mbox{\footnotesize $ \ell $ row of $M$}} 
	\frac{\partial g_\ell}{\partial x_j} A_{\ell k} 
	\frac{\partial f_i}{\partial x_k} \mod I_Z.
	\end{equation} 	
	To discard the extra factor $q$ or more geometrically all components inside 
	$V(q)$, we then need to saturate the resulting ideal with $\langle q \rangle$. 
\end{Rk}

{In the arithmetic setting, 
we first} 
need to determine the primes above which we might find components of $\MaxOrd(X)$ (resp.~$\OMaxNu(X)$). 
{The following example\xch{s} show that %
it is not feasible}
to read off the relevant primes from a given set of generators:

\begin{Ex}
	\label{Ex:Toy_1} \label{xch:Toy_new}
	\xch{Let $f = 3x-y+7z $ and $  g =x-4y+6z $.}
	\begin{enumerate}
	\item 
	Consider 
	$I_Z = \langle g \rangle \subset 
	\langle g,f \rangle = I_X \subset {\mathbb Z}[x,y,z]$.
	{The coefficients of the generators suggest}
	that at most the primes $2$, $3$, $7$ are relevant. However, passing to a 
	different second generator of $I_X$ 
	\xch{we get}  
	$f - 3g = (3x-y+7z)-3\cdot(x-4y+6z)=11\cdot(y-z)$
	\xch{and the locus of order two is $ V (11, y - z , g ) = V (11, y -z, x + 2y) $.}

	\item 
	\xch{Let $ I_Z = \langle 0 \rangle $ and $ I_X  = \langle h \rangle \subset \ZZ[x,y,z] $ be the principal ideal generated by
	\[
		h = f^2 - g^2 
		= 8 x^2 + 2 xy -15 y^2 + 30xz + 34 yz + 13 z^2.
	\]
	If we introduce the new variable  $ \widetilde x := x - 4y + 6 z = g $, then $ f = 3 \widetilde x + 11 (y-z) $.
	Using $ \widetilde y := y - z $, we see that
	\[
		h = (f-g)(f+g) = (2 \widetilde x + 11 \widetilde y) (4 \widetilde x + 11 \widetilde y).
	\]
	The locus of order two is given by the ideal $ \langle 2 \widetilde x + 11 \widetilde y, 4 \widetilde x + 11 \widetilde y \rangle =  \langle 2 \widetilde x, 11 \widetilde y \rangle $.
	In particular, there is the irreducible components $ V (\widetilde x, 11) $, and $ 11 $ does not appear as prime factor in the coefficients of $ h $ as polynomial in $ (x,y) $. }
	\end{enumerate}
\end{Ex}

To systematically determine the primes relevant for the locus of maximal 
order, we now try to mimic the previous algorithm which had been formulated 
in the case of a field of characteristic zero. Over ${\mathbb Z}$, we cannot
expect to obtain the correct locus from this construction -- not even using
Hasse-Schmidt derivatives -- as we are still missing the `derivative' 
with respect to the prime. 
On the other hand, the algorithm for computing $ \MaxOrd(X) $ is blind to vertical
components of it (cf.~Definition \ref{Def:horizon_vertical}) 
{if} 
there are 
horizontal components. 
The latter behavior is not a flaw, 
but in tune with the fact that we have given horizontal components precedence 
over vertical ones in the description of the resolution algorithms in the
previous section.

In an ad-hoc notation, we shall call the primes arising from the algorithm 
{\it interesting} as the bad primes will eventually appear  among those in the
course of the resolution, but not all of the arising primes need to be bad.
  
\color{\farbe} 
  
In \underline{Algorithm \ref{Interesting}: 
	InterestingPrimes
	(p.\pageref{Interesting})}, 
we present an algorithm that detects the set of interesting primes.
\label{xch:Rework_interesting}

\bigskip

\begin{algorithm}[h]
	\caption{InterestingPrimes}
	\label{Interesting}
	\begin{algorithmic}[1]
		\REQUIRE  $I_Z = \xch{\langle g_1,\dots,g_t \rangle}  \subset \ZZ[\underline{x}]$ such that $ Z:=V(I_Z) $ is equidimensional and 
		regular,\\
		$I_X =$ \xch{$ \langle g_1,\dots,g_t,f_1, \ldots, f_r \rangle$} $\subsetneq \ZZ[\underline{x}] $

		\ENSURE   $ \xch{\fP} =\{p_1,\dots,p_\rho\}$ interesting primes for $I_X \supset I_Z$

		\STATE 	resultlist = $\emptyset$

		\color{\farbe} 
				
		\STATE \xch{$I_{temp} = \langle f_1,\dots, f_r \rangle$,} 
		$ I_{int} = \langle 0 \rangle $

		\IF {$I_Z \cap \ZZ${\,!=\,}$\langle 0 \rangle$}
		\STATE resultlist = {\tt primefactors}(generator of principal ideal $I_Z \cap \ZZ$)   (as set) 
		\RETURN resultlist
		\ENDIF

		\color{black} 
		
		\IF {$I_Z == \langle 0 \rangle$}
		\WHILE {$I_{int} == \langle 0 \rangle $}
		\STATE $I_{temp} = I_{temp} + 
		\langle \frac{\partial \xch{F_i}}{\partial x_j} \mid
		1 \leq j \leq n\;\;{\rm and}\;\; 1 \leq i \leq \xch{q} \, \rangle$,
		where \xch{$ F_1,\dots, F_q $ are the generators of $I_{temp} $
                from the last pass through the loop} 
		\STATE $I_{int}= I_{temp} \cap {\mathbb Z}$
		\ENDWHILE
		\STATE resultlist = {\tt primefactors}(generator of principal ideal $I_{int}$)   (as set) 
		\RETURN resultlist
		\ENDIF

		\color{\farbe} 
		
		\STATE resultlist =  resultlist $\cup$ $ \{ p_1, \ldots, p_\alpha  \} $,
		\\
		where  $ \{ p_1, \ldots, p_\alpha  \}  = $ {\tt primefactors}(coefficients appearing in $ g_1, \ldots, g_t $)

		\STATE $L$ = \{$\operatorname{codim}(Z)$ square submatrices of Jacobian 
		matrix of {$ (g_1, \ldots, g_t) $ w.r.t.~$ (x_1, \ldots, x_n ) $}\}

		\STATE choose a subset $L_1 \subset L$ such that 
		$  X \cap D(p_1 \cdots p_\alpha) \subset \bigcup_{M \in L_1} D(\operatorname{det}(M)) \cap D(p_1 \cdots p_\alpha) $
		\FOR {$M \in L_1$}
		
		\STATE denote by $ (y_1,\dots, y_s) $ the system in $ D(\operatorname{det}(M))  \cap D(p_1 \cdots p_\alpha) $ induced by the subsystem of $ ( \bx) $  of variables not 
		corresponding to columns of $M$ 
		\STATE $ I_{int} = \langle 0 \rangle $, $ I_{temp}= \langle f_1,\dots, f_r \rangle $
		\WHILE {$I_{int} == \langle 0 \rangle $}
		\STATE $I_{temp}= I_{temp} + 
		\langle \frac{\partial F_i}{\partial y_j} \mid
		1 \leq j \leq s \;{\rm and}\; 1 \leq i \leq q \, \rangle$,
		where $ F_1 ,\dots, F_q \in \mathbb{Z}[\underline{x}] $ are 
        the generators of $I_{temp}$ computed in the previous pass 
        through the loop 
		%

		\STATE $I_{int} = (I_{temp}+I_Z) \cap {\mathbb Z}$ 
		\ENDWHILE
		
		\IF {$I_{int}${\,!=\,}$\langle 1 \rangle $}
		
		\STATE resultlist = resultlist $\cup$ {\tt primefactors}(generator of principal ideal $I_{int}$)  (as sets)
		\ENDIF 
		
		\ENDFOR
		\RETURN resultlist
		
	\end{algorithmic}
\end{algorithm}

\smallskip 
Note that we demand in line 21 of Algorithm~\ref{Interesting}
that {$ {I_{int}}  \neq  \langle 1 \rangle$} in order to avoid to speak of the prime factors of $ 1 \in \ZZ $. 

\begin{Rk}
	Obviously, the resulting list of interesting primes will heavily depend on the choice of the system of generators $ ( g) $ of $ I_Z $ and $ (g,f) $ of $ I_X $.
	The set $ \{ p_1, \ldots, p_\alpha \} $ is not uniquely determined by $ I_Z $.
	Since our goal is to determine a {\em finite} list of prime numbers, at which we have to test for components in the maximal order locus, the dependence on the mentioned choices is not a problem.
\end{Rk}

In Algorithm \ref{MaxOrdChar0} 
the criterion for the end of the while loop is $ I_{temp} +I_Z  = \langle 1 \rangle $,
while here it is $ I_{int} = (I_{temp} + I_Z) \cap \ZZ \neq \langle 0 \rangle $.
The former can as well be phrased as 
$(I_{temp} + I_Z) \cap  \ZZ  \neq \langle 0 \rangle$, which immediately reveals 
the structural similarity to the latter.

\smallskip

We introduce the following notation.

\begin{Not_num}
	Let $ Z \subseteq \Spec ( \ZZ[x_1,\dots,x_n] ) $  
	be an equidimensional, 
	regular closed subscheme
	and let $ X \subset Z $ be a non-empty closed subscheme of $ Z $. 
	We call a prime number $ p \in \ZZ$ {\em bad 
	(for $ X $)} if the following two conditions hold:
	\begin{itemize}
		\item $ \MaxOrd(X)_{hor} = \emptyset $ and 
		\item there is an irreducible components $ W \subseteq \MaxOrd(X) $ such that $ W \subset V(p) $. 	
	\end{itemize}
	 If $ p $ is not a bad prime, we say $ p $ is a {\em good prime (for $ X$)}.
\end{Not_num}

Whenever it is clear from the context for which subscheme $ X $ we are determining good and bad primes, 
we skip the reference to $ X $.

\begin{Lem}
	\label{Lem:all_bad_primes_interesting}
	Let $ X = V(I_X) \subset Z = V(I_Z) $, for $ I_Z  = \langle g_1, \ldots, g_t \rangle \subset I_X  = \langle g,f_1, \ldots, f_r \rangle \subsetneq \ZZ[\underline{x}] $.
	The set of interesting primes $ \fP =\{p_1,\dots,p_\rho\}$ resulting 
	from Algorithm \ref{Interesting}
	(with the given choice of $ (g, f ) $) contains all bad primes.
\end{Lem}

\begin{proof}	
	Assume $ I_Z \cap \ZZ = \langle m \rangle \neq \langle 0 \rangle $, for $ m \in \ZZ_{\geq 2} $.
	Then $ Z \subset V(m) $ and every prime number, which is bad, has to be a divisor of $ m $.   
	On the other hand, if $ \MaxOrd(X)_{hor} \neq \emptyset $, there is nothing to prove as the set of bad primes is empty in this case, by definition. 
	
	Let us suppose that $ I_Z \cap \ZZ = \langle 0 \rangle $ and  $ \MaxOrd(X)_{hor} = \emptyset $.
	Set $ d := \maxOrd(X) $ 
	and let $ p $ be a bad prime.
	We first consider the case $ I_Z = \langle 0 \rangle $. 
	Since there are no horizontal components in $ \MaxOrd(X) $,
	the locus of points in $ X \times_{\Spec(\ZZ)} \Spec(\QQ) $, which are at least of order $ d $, is empty.
	The latter is obtained by taking $ d-1 $ times the derivatives with respect to the variables $ ( \bx) $.
	This implies, if we apply the same derivatives in $ \ZZ[\bx] $ to generators of $ I_X $,
	then the intersection with $  \ZZ $ of the resulting ideal is non-empty.
	Let $ m \in \ZZ $ be a generator of this principal ideal. 
	By construction, all prime factors of $ m $ are contained in $ \fP $ (Algorithm~\ref{Interesting}, line \xch{7--9}).
	Since the maximal order is $ d $, we may assume $ m > 1 $. 
	
	We claim that $ p $ divides $ m $.
	Suppose this is not the case. 
	In Construction~\ref{Constr:max_locus}, we discussed how to determine the components of the maximal order locus above $ p $. 
	In particular, we repeatedly apply derivatives with respect to $ ( \bx) $. 
	If $ \gcd(p,m) = 1 $, then 
	$ \Delta^{d-1}(J \cdot B_p) = \emptyset $ (using the notation of Construction~\ref{Constr:max_locus}).
	This is a contradiction to $ p $ being a bad prime for $ X $,
	which ends the case $ I_Z =  \langle 0 \rangle $. 
	
	Let us come to $ I_Z = \langle g_1, \ldots, g_t \rangle \neq \langle 0 \rangle $. 
	Let $ \fP_0 := \{ p_1, \ldots, p_\alpha \} \subseteq \fP $ be the set of prime numbers appearing in the coefficients of $ g_1, \ldots, g_t $.
	If $ p \in \fP_0 $, we are done.
	Hence, suppose that $ p \notin \fP_0 $.
	We pass to $ D(p_1 \cdots p_\alpha ) $ and consider a covering of $ Z \cap D(p_1 \cdots p_\alpha ) $ via the Jacobian matrix of $ ( g) $ with respect to $ (\bx) $. 
	Equation~\eqref{eq:NachDiff} in Remark~\ref{NachDiff} provides that the derivatives with respect to the elements $ ( y_1, \ldots, y_s) $ do only involve derivatives by the variables $ (\bx) $.
	Hence, these derivatives can also be considered over $ \QQ $, as in the case $ I_Z = \langle 0 \rangle $.  
	The analogous arguments as in the situation $ I_Z = \langle 0 \rangle $ apply, we only have to replace $ \frac{\partial}{\partial x_i} $ by $ \frac{\partial}{\partial y_j} $ in our reasoning. 
	This ends the proof.
\end{proof}

{The condition that there is no horizontal component is crucial}:

\begin{Ex}
	Consider $ I_X = \langle x^2 - 5^9 y^3 \rangle \subset \ZZ[x,y] $ and $ I_Z = \langle 0 \rangle $.
	It is not hard to see that $ \MaxOrd(X) = V(x,y) \cup V(x,5) $.
	After the first pass through the loop starting at line 3, we get
	\[
	I_{temp} = \langle  x^2 - 5^9 y^3, \ 2x, \ 3 \cdot 5^9 y^2 \rangle 
	\mbox{ and } 
	I_{int} = \langle 0 \rangle.
	\]
	We stay in the loop and obtain after a second run:
	\[
	I_{temp} = \langle  x^2 - 5^9 y^3, \ \ 2x, \ \ 2, \ \ 3 \cdot 5^9 y^2, \ \ 2 \cdot 3 \cdot 5^9 y \rangle = \langle  x^2, \ 2, \ y^2 \rangle
	\mbox{ and } 
	I_{int} = \langle 2 \rangle \neq \langle 0 \rangle .
	\]
	Hence, $ 2 $ is the only interesting prime.
	In particular, we do not detect $ 5 $.  
	This is not a contradiction since $ V(x,y) $ is a horizontal component.
\end{Ex}

We now pick up the toy example of before 
{(Example~\ref{Ex:Toy_1}\xch{(1)})}
to illustrate the algorithm 
to determine the interesting primes 
in the case $ I_Z \neq \langle 0 \rangle $:

\begin{Ex}
	\label{Ex:toy_prime}
	Let $I_Z = \langle g \rangle \subset 
	\langle g,f \rangle = I_X \subset {\mathbb Z}[x,y,z]$ where
	$f =3x-y+7z$ and $g=x-4y+6z$.
	First, $ 2 , 3, 7 $ appear as prime factors in the coefficients of $ g $, hence they are interesting primes. 
	We can use the generator $g$ of $ I_Z $ to 
	eliminate the variable $x$ as it has the coefficient $1$, which also means
	that the corresponding $1$-minor $q$ of the Jacobian matrix has the value $1$.
	(So, we do not have to pass to the locus, where $ 2 \cdot 3 \cdot 7 $ is invertible.)
	This implies that $ ( y, z) $ is the desired subsystem inducing 
	$ ( y_1, y_2 ) $ of line \xch{16} in Algorithm~\ref{Interesting}.
	Using $q=1$, the 
	derivatives of {$ f $} are:
	\[
	\frac{\partial f}{\partial y_1} = \frac{\partial f}{\partial y} -
	\frac{\partial g}{\partial y} \frac{\partial f}{\partial x}=
	-1 - (-4) \cdot 3 = 11
	\ \ \mbox{ and } \ \ 
	\frac{\partial f}{\partial y_2} = \frac{\partial f}{\partial z} -
	\frac{\partial g}{\partial z} \frac{\partial f}{\partial x}
	=
	7 - 6 \cdot 3 = - 11.
	\]
	Thus, the set  of interesting primes is $ \fP= \{ 2,3 , 7,11 \} $. 
\end{Ex}

It is necessary that we collect the primes coming from a given set of generators of $ I_Z $:  

\begin{Ex}
	Let $ p \in \ZZ $ be a prime number. 
	Consider   
	$ I_Z = \langle  p - x^p \rangle \subset \ZZ[x,y,z]$
	and
	$ I_X = I_Z + \langle z^2 + x^p y \rangle $.
	The derivatives by $ (x,y,z) $ are not sufficient to cover $ V(I_X) $ as
	we are missing those points lying above $ p $.
	On the other hand, it not hard to verify, that the locus of maximal order of $ X $ is $ V (p,x,z) $.
\end{Ex} 

To see, how the algorithm proceeds, let us consider a more complex example:

\begin{Ex}
	\label{Ex:InterestingPrimes}
	Let $I_Z := \langle 0 \rangle \subset I_X := \langle 3^2 x^2 - 5^2 y^2  \rangle \subset \ZZ[x,y] $. 
	Here it is easy to check that the maximal order $2$ is attained precisely
	at $ V ( 5, x) \cup V (3, y) \cup V (x,y) $, where the first two components
	are vertical and the last one is horizontal. After the first pass through the
	while loop starting at line 7, we have
	\[
	 I_{temp} =  \langle 3^2 x^2 - 5^2 y^2, \ \ 2 \cdot 3^2 x , 
	\ \ 2 \cdot  5^2 y   \rangle\ \ {\rm and}\;\; 
	I_{int} = \langle 0 \rangle .
	\]
	After the second pass we obtain
	\[
	 I_{temp} = \langle  3^2 x^2 - 5^2  y^2,\ \ 2 \cdot 3^2 x , 
	\ \ 2 \cdot  5^2 y,  \  \  2 \rangle = 
	\langle x^2 + z^2, \ \ 2 \rangle
	\ \ {\rm and}\;\; 
	I_{int} = \langle 2 \rangle \neq \langle 0 \rangle. 
	\]
	Thus, we leave the loop and 
	return only the interesting prime $2$ which is not bad. This is due to the 
	presence of the horizontal component
	$  V (x,y) $. (An algorithm to properly determine this component
	will be discussed in Algorithm \ref{MaxOrdAlgArith}).

	After blowing up with center $ V ( x, y ) $, we consider the $ X $-chart, i.e., the chart with coordinates $ ( x', y' ) = (x, \frac{y}{x}) $.
	Here, again 
	$I_Z = \langle 0 \rangle$, but $I_{\tilde{X}} = \langle 3^2 - 5^2 y'^2 \rangle$.
	Running the algorithm as before, we obtain 
	\[
	I_{temp}= \langle 3^2 - 5^2 y'^2, 2 \cdot 5^2 y' \rangle,
	\]
	which already contains an integer $z = 2 \cdot 3^2$ after
	the first pass; i.e.,
	$ 
	I_{int} = \langle 2 \cdot 3^2 \rangle \neq \langle 0 \rangle  
	$ 
	and the loop stops. 
	Hence, we have
	collected the interesting primes $2$ and $3$, where of course $2$ is still not
	bad, but $3$ really leads to a vertical component. In the other chart, we then see 
	the bad prime $5$ in complete analogy to the discussed chart.
\end{Ex}

The above phenomenon is one of the reasons why we vary the original Cossart-Jannsen-Saito algorithm and give a preference to horizontal components (cf.~Construction \ref{CJS-Constr}).

\color{black} 

\begin{Rk}
	In principle we can use Hasse-Schmidt derivatives (cf.~the idea of the 
	computation in Remark \ref{RemHasseSchmidt} 
	and the corresponding Algorithm 
	\ref{HasseDer} below) instead of the usual 
	derivatives in the preceding algorithm. The ratio of primes contributing 
	to $\MaxOrd(X) $ among the interesting primes would then be higher, because we 
	would not pick up as many primes originating directly from exponents. 
	However, we would like to postpone the rather technical discussion of the
	computation of Hasse-Schmidt derivatives with respect to a system of parameters in the
	presence of a non-trivial $ Z $ as long as possible and first focus on the 
	central task of determining the locus of maximal order.
\end{Rk}

Given a list of interesting primes and the ideals $I_X$ and $I_Z $, we are now 
ready to determine $\MaxOrd(X)$, 
see \underline{Algorithm \ref{MaxOrdAlgArith}: MaxOrdArith (p.\pageref{MaxOrdAlgArith})}. 
As this requires working locally at an 
interesting prime $p$ and differentiating with respect to it, we need {to} introduce

\begin{Construction} 
	\label{Constr:p-adic_to_polynomial}
	{Let $ p \in \ZZ_+ $ be a prime number and $ c \in \ZZ $.
		We have $ c = c' p^\ell $ with $ c' \in \ZZ $, $ \ell \in \ZZ_+ $, and $ \gcd(c',p) = 1 $. 
		We introduce a new variable $ P $ and replace $ c $ by $ c' P^\ell $.}
		
	{Now, let $ I_Z \subset I_X \subset \ZZ[\bx] $ be ideals. 
		We choose a standard basis $ (g_1, \ldots, g_t, f_1, \ldots, f_r) $ for $ I_X $ such that $ ( g_1, \ldots, g_t ) $ is a standard basis for $ I_Z $.
		For every $ f_i, g_j \in \ZZ[\bx] $, we replace the non-zero coefficients as indicated before and denote by $ F_i \in \ZZ[\bx,P] $ (resp.~$ G_j $) the resulting element.
		We then set $ J_X := \langle G_1, \ldots, G_t, F_1, \ldots, F_r \rangle $ and $ J_Z :=  \langle G_1, \ldots, G_t\rangle \subset \ZZ[\bx, P] $.}
\end{Construction}

\begin{algorithm}[!h]
	\caption{MaxOrdArith}
	\label{MaxOrdAlgArith}
	\begin{algorithmic}[1]
		
		\REQUIRE   $I_Z \subset {\mathbb Z}[x_1,\dots,x_n] = \ZZ[\underline{x}]$ such that $ Z:=V(I_Z) $ is equidimensional and 
		regular,
		
		$I_X \subsetneq \ZZ[\underline{x}] $ such that $I_Z \subset I_X$, i.e., $ \emptyset \neq X:=V(I_X) \subset Z $
		
		\ENSURE  $ (m,L)$, where
		$m = \maxOrd(X)$ and
		$L$ list, where $L[i] = (p_i,I_i)$ such that:
		
		{$\bullet$ either $ p_i = 0 $ for all $ i $ and $\MaxOrd(X)_{hor} = \bigcup_i V(I_i)$, if it 
			is non-empty,}
		
		{$\bullet $ or}
		$\MaxOrd(X) = \bigcup_i V(I_i)$ and 
		$I_i$ has been detected locally at {$  p_i > 0  $}

		\STATE MaxOrd0 = {\tt MaxOrd}($I_Z \otimes {\mathbb Q}, I_X \otimes {\mathbb Q}$)
		\STATE $maxord$ = MaxOrd0$[1]$, $I_{max}$ = {MaxOrd0}$[2] \cap {\mathbb Z}[\underline{x}]$
		\STATE RetList[1] = $(0,I_{max})$
		\STATE PrimeList = {\tt InterestingPrimes}($I_Z, I_X$)
		
		\FOR  {$p \in $ PrimeList}
		\STATE 
		{apply Construction~\ref{Constr:p-adic_to_polynomial} for $ I_Z \subset I_X $ 
		(replace each appearing coefficient $c \in {\mathbb Z}$ by $\frac{c}{p^\ell} P^\ell $, for $ \ell $ maximal),}
		denote by $ J_X $ and $ J_Z $ the resulting ideals in $ \ZZ[\underline{x}, P] $

		\IF {$I_Z == \langle 0 \rangle$}
		\STATE DiffList = {\tt HasseDeriv}($ \langle 0 \rangle ,J_X,(\underline{x},P),0$)
		{\small // i-th entry = up to $i$-th derivatives of $ J_X$}

		\STATE $m={\tt size}($DiffList$)$
		\FOR  {$ i \in \{1, \ldots, m \}$}
		\STATE DiffList$[i]={\tt ideal} \big({\tt substitute}($DiffList$[i],P,p) \big) $
		\ENDFOR 
		\WHILE {DiffList$[m] == \langle 1 \rangle $}
		\STATE $ m=m-1 $
		\ENDWHILE 
		\IF {$m>=maxord$}
		\STATE $I_{max}=$ DiffList$[m]$
		\IF {$m>maxord$}
		\STATE RetList = $\emptyset$, \ $maxord = m $, RetList$[1]=(p,I_{max})$
		\ELSIF {{RetList$[1][1]\, $!=$ \, 0 $}} 
		\STATE RetList$[{\tt size}($RetList$)+1]=(p,I_{max})$
		\ENDIF
		\ENDIF 
		\ELSE
		\STATE $L$ = \{$\operatorname{codim}(Z)$ square submatrices of Jacobian 
		matrix of $J_Z {\subseteq\ZZ[\bx, P]}$\} 
		\STATE choose a subset $L_1 \subset L$ such that 
		$ V(J_X) \subset \bigcup_{M \in L_1} D(\operatorname{det}(M))$   
		\STATE $locord=1$
		\FOR {$M \in L_1$}
		\STATE fix $(\underline{y})$ a system of parameters on 
		$ V(J_Z) \cap D(\operatorname{det}(M))$ as in line 12 Algorithm \ref{MaxOrdChar0}
		
		\STATE  DiffList $=$ {\tt HasseDeriv}($J_Z,J_X,\underline{y},M$)
		
		\STATE $m={\tt size}($DiffList$)$
		\FOR  {$ i \in \{1, \ldots, m \}$}
		\STATE DiffList$[i]={\tt ideal} \big({\tt substitute}($DiffList$[i],P,p) \big) $
		\ENDFOR 
		\WHILE {DiffList$[m] == \langle 1 \rangle $}
		\STATE $ m=m-1 $
		\ENDWHILE 
		\IF {$m>locord$}
		\STATE $I_{max}=\,$DiffList$[m]$, $locord=m$
		\ELSIF {$m==locord$}
		\STATE $I_{max}=I_{max}\cap \,$DiffList$[m]$
		\ENDIF
		\ENDFOR
		\IF {$locord >= maxord$}
		\IF {$locord>maxord$}
		\STATE RetList = $\emptyset$, \ $ maxord = locord $, RetList$[1]=(p,I_{max})$
		\ELSIF {{RetList$[1][1]\, $!=$ \, 0 $}} 
		\STATE RetList$[{\tt size}($RetList$)+1]=(p,I_{max})$
		\ENDIF
		\ENDIF 
		\ENDIF
		\ENDFOR
		\RETURN {($maxord$, RetList)}
		
	\end{algorithmic}
\end{algorithm}

Note
that it is not necessary to compute a full $p$-adic expansion, as locally 
in the stalk above $p$ we see a unit {$ c' = \frac{c}{p^\ell}$} due to the choice of $\ell$.

\begin{Rk} 
	{Note that in the output of Algorithm~\ref{MaxOrdAlgArith}, the ideals $ I_i $ are not necessarily prime ideals. 
	Hence, one might prefer to modify the algorithm slightly at the end in order to obtain that each $ I_i $ corresponds to an irreducible component.
	As this is not an essential part, we leave this to an explicit implementation.} 	
\end{Rk} 

\smallskip  

In Algorithm \ref{MaxOrdAlgArith}, there is still one black box to be 
described further: HasseDeriv. In contrast to derivatives in the 
characteristic zero case, we cannot compute them iteratively. Instead, the
Hasse-Schmidt derivatives of a given polynomial $f(\bx )$ are extracted
from $f(\bx + \underline{t})$ as the coefficients of the respective
$\underline{t}^{\alpha}$, as was already mentioned above (see \eqref{eq:HasseSchmidt_mit_Taylor}).
In view of the need to compute derivatives with respect to a regular system of 
parameters for a non-trivial $Z$, we are forced to reconsider the arguments of
Remark \ref{NachDiff}:

\begin{Rk}
	\label{RemHasseSchmidt}
	In the setting analogous to Remark \ref{NachDiff}, we again consider a minor 
	$q=\operatorname{det}(M)$ of the Jacobian matrix of $Z$ and restrict the 
	considerations to $D(q)$, where $q$ is invertible. As 
	before, we can can compute the usual derivatives w.r.t. $ {(\underline{y})} $.
	To pass to Hasse{-Schmidt} derivatives, however, we have to recall the symbolic identity \eqref{eq:HS-der_one},
	\[
	\frac{\partial}{\partial Y^a} \, Y^b =
	\frac{1}{a!} \left( \frac{\partial}{\partial Y} \right)^a \, Y^b.
	\] 
	This stresses that first order Hasse{-Schmidt} derivatives coincide with usual 
	derivatives and it allows us to transform the higher usual derivatives into 
	Hasse{-Schmidt} derivatives at each step of the computation by applying the appropriate
	correction factor. 
	
	In the algorithm to compute the list of Hasse{-Schmidt} derivatives, we thus carry 
	along the information from which derivative a given polynomial originated and
	reuse this when passing to the next derivative. What we are not allowed to 
	apply in this approach is a change of generators of intermediate ideals. 
	Therefore, we need to carry along both the list of already computed 
	derivatives and the corresponding saturated ideal. 
\end{Rk}

In \underline{Algorithm \ref{HasseDer}: HasseDeriv (p.\pageref{HasseDer})}, 
the previous considerations have been 
reformulated closer to an implementable algorithm.

\begin{algorithm}[h]
	\caption{HasseDeriv}
	\label{HasseDer}
	\begin{algorithmic}[1]
		
		\REQUIRE  $I_Z = \langle g_1,\dots,g_t \rangle \subset 
		\ZZ[\underline{x}]$ such that $ Z:=V(I_Z) $ is equidimensional and 
		regular,\\
		$I_X = I_Z + \langle f_1,\dots,f_r \rangle \subsetneq \ZZ[\underline{x}] $, i.e., $ \emptyset \neq X:=V(I_X) \subset Z $,\\
		{$(\underline{y})$} system of parameters on 
		$Z \cap D(\operatorname{det}(M))$,\\
		$M$  $\operatorname{codim}(Z)$ square submatrix of Jacobian matrix 
		of $I_Z$
		
		\ENSURE list $RetList$ such that 
		$RetList[i]=I_Z + \langle {\mathcal D}_a f\mid f\in I_X, |a| \leq i \rangle$ 
		
		\IF {$I_Z== \langle 0 \rangle$}
		\FOR {$j \in \{1,\dots,r\}$}
		\STATE $F_j(\underline{y},\underline{t})=f_j(x_1+t_1,\dots,x_n+t_n)$
		\ENDFOR
		\STATE $i=1$, tempid = $f_1,\dots,f_r$ 
		%
		%
		%
		%
		%
		%
		%
		\WHILE {(($i==1$) OR (tempid {!=} RetList[i-1]))}
		\STATE RetList[i] = tempid
		\FOR {$\ba \in \{ \bb \in \ZZ_{\geq 0}^n \mid |\bb| ==i\}$}
		\STATE tempid = tempid, $\{$ coefficients of $ t^\ba  $ in $F_1,\dots,F_r $ $ \} $
		\ENDFOR
		\STATE $i=i+1$
		\ENDWHILE
		\RETURN {RetList}
		\ENDIF
		\STATE $I_{temp}=I_X$, $Null=(0,\dots,0)$ ($Null$ has \#{$(\underline{y})$} entries)
		\FOR {$1 \leq i \leq r$}
		\STATE L[$i$] = $(f_i,Null)$
		\ENDFOR
		\STATE old = $0$, cur = $r$
		\WHILE {$I_{temp} \cap {\mathbb Z} == \langle 0 \rangle$} 
		\FOR {old $< i \leq $ cur}
		\FOR {$y_j \in {(\underline{y})}$}
		\STATE {($f_{temp}$, note) = L[$i$]}
		\STATE note{$[j]$} = note{$[j]+1$}
		\STATE $f_{temp} = \dfrac{1}{{\mbox{note}}[j]} \cdot 
		\dfrac{\partial f_{temp}}{\partial y_j}$\\
		{\small{(precise formulation of 
				$\frac{\partial f_{temp}}{\partial y_j}$
				in Remark \ref{NachDiff} -- these are usual derivatives)}}
		\STATE L[{\tt size}(L)$+1$] = 
		($f_{temp}$, note) 
		\STATE $I_{temp} = I_{temp}+ \langle f_{temp} \rangle$
		\ENDFOR
		\ENDFOR
		\STATE $I_{temp} = $ sat($I_{temp},\operatorname{det}(M)$)
		\STATE RetList[{\tt size}(RetList)+1] = $I_{temp}$
		\STATE old = cur, cur = {\tt size}(L)
		\ENDWHILE
		\RETURN {RetList}
		
	\end{algorithmic}
\end{algorithm}

%

Even if we start with a trivial ambient space, i.e., $ I_Z = \langle 0 \rangle $,
it may become non-trivial after blowing up:

\begin{Ex}	
	\label{Ex:hyper}
	Consider the hypersurface {$ X \subset Z = \Spec (  \ZZ[x,y] ) $} 
	given by 
	\[ %
	f = 3^2 5^2  + 5 x y + x^3 y^3 \in  \ZZ[x,y].
	\]
	We assume that the boundary is empty.
	The maximal order achieved for $ f $ is $ 2 $ and
	\[ 
	\OMaxNu(f) := \OMaxNu(X) = V ( 3, x, y ) \cup V( 5, x ) \cup V ( 5, y ) ,
	\]
	where the first component is disjoint from the other two.
	All get label $ 0 $.
	One observes that $ \MaxNu(f) $ is not \good and 
	its singular locus is $ V ( 5, x, y ) $.
	Therefore we blow up with center $ V ( 5, x, y ) $.
	Consider the $ Y $-chart, where we set
	$ q := \frac{5}{y} $ and $ x' := \frac{x}{y} $ and $ y' = y $.
	
	It is important to keep in mind the transform of the ambient space, which is
	non-trivial in this chart:  
	$ Z' = \Spec (  \ZZ[x',y', q] / \langle 5 - q y' \rangle  ) $.
	The exceptional component is given by $ V( y' ) $, 
	hence the boundary is $ \cB' = \{ V ( y' ) \} $.
	The strict transform of $ f $ is 
	\[ 
	f' = 3^2 q^2  + q x' y' + {x'}^3 {y'}^4.
	\]
	Note that $ \OMaxNu(f') $ consists of the strict transform of $ \OMaxNu(f) $ in $ Z ' $  
	and components contained in the exceptional divisor.
	Hence we obtain 
	\[ 
	\OMaxNu(f') =  V( q, x' ) \cup V ( q, y' ) \xch{.} \label{xch:period_4.15}
	\]
	The first component is the strict transform of $  V( 5, x ) $ 
	and has label 0. 
	The second is contained in the exceptional component and 
	thus gets the new label 1. 
	
	Note: In the computations, the component $  V ( 3, x',y' ) $ of the maximal order locus has been 
	handled, but we do not see it in this chart as its intersection with the non-trivial ambient space
	is empty here: $\langle 3,x',y',5 - q y' \rangle = \langle 1 \rangle$ as $\operatorname{gcd}(3,5)=1$.
	
	Since $ V( q, x' ) $ is \good 
	the algorithm chooses this component as the center for the next blowing up.
	We leave the other charts and the remaining resolution process to the reader.
\end{Ex}

Let us point out again that our variant is different from the original 
CJS-resolution.
Since we are using $ \nut $ instead of the Hilbert-Samuel function, 
we cannot control the maximal value of the Hilbert-Samuel function along our variant of the resolution process:

\begin{Ex}
	\label{Ex:diff} 
	Let {$ X \subset Z = \mathbb{A}^3_{\ZZ}  $} 
	be the {scheme} 
	defined by the ideal 
	\[ 
	J = \langle x^2 - y^{17}, p^5 - y^2 z^6 \rangle \subset \ZZ[x,y,z] ,
	\] 
	where $ p \in \ZZ $ is a prime number.
	We assume that the 
	boundary is empty $ \cB = \emptyset $.
	We have $ \maxNu(X) = ( 4,2) $ and
	\[
	\OMaxNu (X) = \MaxNu(X) =  V ( p, x,y ). 
	\]
	In contrast to this, the maximal Hilbert-Samuel locus is 
	$ V (p, x,y,z ) $.	
	
	The algorithm proposes as first center $  V ( p, x,y ) $.
	After blowing up, we consider the $ Y $-chart, 
	where we use the notation 
	$ p' := \frac{p}{y} $, $ x' := \frac{x}{y} $, $ y' = y $, 
	and $ z' := z $.
	The strict transform of the ambient space is 
	$ Z' = \Spec (  \ZZ[p',x',y',z'] / \langle p - p' y' \rangle  ) $
	and $ X' $ is given by the ideal
	\[
	J' = \langle x'^2 - {y'}^{15}, \ {p'}^5 {y'}^3 - {z'}^6 \rangle
	\xch{.} \label{xch:period_4.16}
	\]
	Hence, $ \maxNu(X') =  \maxNu(X) = ( 4,2) $.
	Note that the maximal order of the second polynomial increased from $ 5 $ to $ 6 $ after the blowing up. 
	This implies that the maximal value achieved by the Hilbert-Samuel function increased%
	\footnote{The precise value is $ (1,4,9,16,25,36,48,\dots) $, whereas it was $(1,4,9,16,25,35,45,\dots) $ before blowing up.}, 
	which is not surprising since we have blown up a center
	that is not contained in the maximal Hilbert-Samuel locus.
	
	We have $ \OMaxNu ( J' ) = V (x',y',z') $ which is a \good and hence the next center.
	We leave the other charts and the remaining resolution process to the reader.
\end{Ex}

{To end this section, let us outline for the toy example, why it is crucial to use a standard basis in Construction~\ref{Constr:p-adic_to_polynomial}.}

\begin{Ex}
	{Let $I_Z = \langle g \rangle \subset 
		\langle g,f \rangle = I_X \subset {\mathbb Z}[x,y,z]$ where
		$f =3x-y+7z$ and $g=x-4y+6z$.
		In Example~\ref{Ex:toy_prime}, we have seen that $ p = 11 $ is the only interesting prime.
		If we replace each coefficient $ c $ in $ f $ and $ g $ by $ c' P^\ell $ for $ c' \in \ZZ $ prime to $ 11 $, we obtain the same elements.
		With the same argument as in Example~\ref{Ex:toy_prime}, $ (y,z,P) $ induces a regular system of parameters $ (y_1, y_2, y_3) $ at every point of $ V(I_X) $
		and we have $ \frac{\partial f}{\partial y_1} = P $ and $ \frac{\partial f}{\partial y_2} = -P $.
		Further, since $ P $ does not appear, $ \frac{\partial f}{\partial y_3} = 0 $.
		This suggests that the locus of maximal order is given by the ideal $ \langle g, f, 11 \rangle = \langle g, 11 \rangle $,
		where the last equality holds since $ f = 3g + 11(y-z) $.
		This cannot be true.}
	
	{On the other hand, we may replace $ f $ by $ h := 11 (y-z) $ and $ (g,h) $ is a standard basis for $ I_X $.
	Applying Construction~\ref{Constr:p-adic_to_polynomial} yields the element $ H = P (y- z ) $. 
	Computing the derivatives leads to the correct maximal order locus $ V (g, 11, y-z) $.
	The details are left as an exercise to the reader.}
\end{Ex}

\section{Exploiting the Naturally Parallel Structure}

In this section, we discuss the parallelization of the algorithm for finding
the center of the chosen resolution strategy. The ubiquitous use of coverings 
by charts already suggests a high potential for parallelization; instead of 
starting with an arbitrarily chosen covering by a subset of the minors of the
Jacobian matrix of $I_Z$, we can also start with all possible choices, run 
them in parallel and terminate as soon as all of $X$ has been covered. As we
aim to use GPI-Space \cite{GPI} as the workflow-management system for managing and 
scheduling our parallel approach, we rephrase our algorithms in the language
of Petri nets, which is slightly unusual for algebraic geometry, but has 
already proved useful in \cite{BDF}. 

\subsection{Petri nets and parallel implementation}
Petri nets are models of distributed systems.
In its basic form, a Petri net is a bipartite, directed, finite graph. 
Its vertices are \emph{places}, which are denoted by circles in graphical representation, 
and \emph{transitions}, which are denoted by rectangles. 
The edges of the graph are called \emph{arcs}. 
The places from which there is an arc to a certain transition are called 
the \emph{input places} of that transition. 
Similarly, the places which can be reached with an arc from 
a certain transition are the \emph{output places} of that transition.
There can also be \emph{tokens} on the places, depicted by small solid circles. 
A token can be thought of carrying a piece of information 
(formally: it is of a certain \emph{color}) 
that is being processed by the Petri net. 
A configuration of tokens is called a \emph{marking} of the net.
\emph{Colored Petri nets} are in fact an extension to the original concept where the tokens on one place are indistinguishable. 
There are actually numerous extensions of Petri nets; 
some of them provide important features for describing computations  
which cannot be phrased with the language of basic Petri nets. 
We describe the properties of the Petri nets which are used in GPI-Space.
The latter is a parallel environment that handles the workflow management 
of our implementation.

A transition of a Petri net is called \emph{enabled} 
if there is at least one token at each of its input places. It can \emph{switch}
(or \emph{fire}). When it switches, the transition consumes one token from 
every input place and places new tokens on every output place. Note that the 
Petri net itself makes no statement when an enabled transition will switch 
(if it does at all) and which tokens it will consume if there are several 
to choose from. In this sense, Petri nets are executed in a non-deterministic 
way (the actual choices are up to the underlying implementation).

Let us now consider an extremely simplified example to get used to the 
terminology of Petri nets:

\begin{Ex} \label{compositionofmaps}
	Consider a composition of functions $ X \xrightarrow{f} Y \xrightarrow{g} Z$ between some sets $ X, Y, Z $. 
	This translates into the following Petri net
	(only showing places and transitions):
	\begin{petrinet}
		\node[placeL]      (pX) [label=below:$p_X$]          {};
		\node[transitionL] (f)  [right of=pX]                            {$f$};
		\node[placeL]      (pY) [label=below:$p_Y$,right of=f] {};
		\node[transitionL] (g)  [right of=pY]                            {$g$};
		\node[placeL]      (pZ) [label=below:$p_Z$,right of=g] {};
		
		\path[->]
		(pX) edge (f)
		(f)  edge (pY)
		(pY) edge (g)
		(g)  edge (pZ)
		;
	\end{petrinet}
	So far, we are hiding the tokens.
	If there is a token in $ p_X $, it can be
	consumed by the transition $ f $ which then provides 
	at least one token in $ p_Y $,
	see below for further explanations.
	A snapshot of the Petri net might look as follows:	
	\begin{petrinet}
		\node[placeL]      (pX) [label=below:$p_X$, tokens = 3]          {};
		\node[transitionL] (f)  [right of=pX]                            {$f$};
		\node[placeL]      (pY) [label=below:$p_Y$,right of=f] {};
		\node[transitionL] (g)  [right of=pY]                            {$g$};
		\node[placeL]      (pZ) [label=below:$p_Z$,right of=g , tokens = 1] {};
		
		\path[->]
		(pX) edge (f)
		(f)  edge (pY)
		(pY) edge (g)
		(g)  edge (pZ)
		;
	\end{petrinet}
	That means currently, there are three tokens in $p_X $ 
	waiting to be consumed by $ f $ and one in $ p_Z $. Transition $g$
	cannot fire, as there are no token in its input place $p_Y$, whereas
	transition $f$ is enabled as a consequence of the tokens in $p_X$.
	Upon firing, transition $f$  consumes one token from place $p_X$,
	representing an element $x \in X$, and produces one token on place 
	$p_Y$, representing a token $f(x) \in Y$.
\end{Ex}

Example \ref{compositionofmaps} also shows further properties of Petri nets: 
One important point is that all dependencies are local, at least in theory. 
Every transition only needs to know about its input and output places, 
for both the data (color of tokens) and control dependency (presence of 
tokens). There is no need for any kind of global synchronization or global 
clock signal. This also means that in a real setting (i.e., in GPI-Space 
through which the computer executes the Petri net), there is usually no 
global state during execution that can be described with a marking. 
In the model, each switching is an atomic, instantaneous process,
i.e., it is a step that {cannot} be further divided into smaller parts and 
it is processed immediately without time consumption. 
In a real system, the execution of a transition will take a finite amount 
of time and there will be several transitions executing at a given point in 
time, in general. However, it is always possible to give an equivalent 
\emph{switching sequence} without overlaps.

This leads to the second important point: Petri nets automatically come with 
parallelism. There is both data parallelism, that is, there can be several 
instances of one transition running at the same time (on different CPU 
cores or even on different machines), but also task parallelism, that is, 
instances of different transitions can be running simultaneously.

\smallskip

Keeping this background in mind, we can now state a Petri net corresponding 
to a simplified version of the embedded resolution process:

\begin{Ex} \label{Grobstruktur}
	Roughly speaking a resolution of singularities via blowing ups in regular 
	centers has the following structure:
	Given $ X \subset Z $ as in the previous sections, first determine the 
	center $ C $ of the first blowing up. Then, if $X$ is not already
	resolved (i.e., if $ C \subsetneq X $), we blow up at the center $ C $. 
	This process is then repeated for the transform of $ X \subset Z $ under 
	the blowing up.	Eventually, we reach the point that all singularities are 
	resolved -- at least in those cases, where desingularization by a finite
	number of blowing ups is not an open problem.
	
	As we mentioned before, the resolution data is collected in local affine 
	charts. In other words, given the initial data $ X  \subset Z $, we first 
	have to split it into affine charts, each of which corresponds to a token 
	(in place a below). Furthermore, after blowing up (i.e., at place c below), we 
	also have to split the data into finitely many affine charts (i.e., we 
	produce several tokens from one). Those of the affine charts (i.e., tokens) 
	that are resolved are detected at place b and glued with the already handled
	resolved charts. The resulting token holding the glued object is put into
	place d, ready for gluing the next chart to it. If the token on place d 
	already describes the entire resolution of $X$, the transition Heureka 
	fires and initiates the clean-up and termination of the Petri net. The 
	output is
	an Embedded Resolution of Singularities $ERS(X,Z,\cB)$, where $ \cB$ is some boundary on $ Z $.
	
	These explanations provide the simplified Petri net \xch{of Figure~\ref{Fig:ERS}}, where we use the obvious 
	notation. We mark the beginning (resp.~end) 
	by an upper (resp.~lower) half-disc.   	

	\begin{figure}
	\begin{petrinet}
		\node[startplaceL] (start) [label=above:{\footnotesize $(X, Z, \cB)$}]                 {};
		\node[transitionL] (init)  [below of=start, node distance=0.7\ndL] {Init};
		\node[placeL]      (pla)   [right of=init, node distance=1.2\ndL, label=below:{\footnotesize $X_{af}$}]  {a};
		\node[transitionL] (find)  [right of=pla]   {FindCenter};
		\node[placeL]      (plb)   [right of=find, label=right:{\footnotesize ($X_{af},C_{af}$)}]  {b};
		\node[transitionL] (blow)  [above of=plb]   {BlowUp};
		\node[placeL]      (plc)   [left of=blow]   {c};
		\node[transitionL] (split) [left of=plc]    {Split};
		\node[transitionL] (coll)  [below of=plb] {Done \&  Glue};
		\node[placeL]      (pld)   [left of=coll, node distance=1.8\ndL]  {d};
		\node[transitionL] (heur)  [left of=pld, node distance=1.5\ndL] {Heureka};		
		\node[endplaceL]   (end)   [below of=heur, node distance=0.7\ndL, label=right:{\footnotesize $ERS(X, Z, \cB)$}]   {};

		\path[->]
		(start) edge (init)
		(init)  edge node [above] {\footnotesize many} (pla)
		(pla)   edge (find)
		(find) edge (plb)
		(plb)   edge node [right] {\footnotesize $C_{af} \subsetneq X_{af}$} (blow)
		(blow)  edge (plc)
		(plc)   edge (split)
		(split) edge node [right] {\footnotesize many} (pla) 
		(plb)   edge node [right] {\footnotesize $C_{af} = X_{af}$} (coll)
		(pld)  edge [bend left=20] node [above] {\footnotesize $X$ not covered} (coll)
		(coll) edge [bend left=20] (pld)
		(pld)  edge node [above] {\footnotesize $X$ covered} (heur)
		(heur) edge (end)
		;
	\end{petrinet}
	\caption{\xch{Simplified Petri net for an embedded resolution of singularities.}}
	\label{Fig:ERS}
	\end{figure}
\end{Ex}

Note that we have omitted several details in the above Petri {net}, e.g. the use
of place d can only work properly, if a token representing the empty set is 
placed there at the beginning. To discuss all technicalities of the use of 
Petri nets, such as the use of 
conditions and parameter dependent behavior, is far beyond the scope of the
current article.
For an introduction to parallelization using Petri nets for algebraic geometers
we refer to section 3 and 4 of \cite{BDF}. 
We only state simplified Petri nets in the following sense: only the 
essential steps in the respective 
algorithm are presented, some of the transitions may themselves represent
Petri nets, like {\tt FindCenter} above, whereas others, like {\tt BlowUp},
represent code which is executed in the back-end {\sc Singular} \cite{Sin} and seen as
atomic from the perspective of GPI-Space. Moreover, we take the liberty to
add annotations for the data (i.e., coloring of tokens) at those places, 
where it contributes to a better understanding, and for the arcs, where
the transition is subject to a condition or where more than one output token
is created.

\subsection{Petri nets within the desingularization process}

In Example \ref{Grobstruktur}, we have already seen the inherent parallelity
through the use of charts, but we have not yet discussed the very heart of the
algorithm: the choice of center. The \xch{Petri net in Figure~\ref{Fig:FindCenter}} describes the 
algorithm {\tt FindCenter}, which is itself of sequential nature and directly
corresponds to the steps outlined in section \ref{algs} as can easily be
observed from its Petri net. The input- and output-tokens of this net are
precisely of the structure of the respective tokens at the input and output
place for {\tt FindCenter} in the Petri net in Example \ref{Grobstruktur}.

\begin{figure} 
\begin{petrinet}
	\node[startplaceL] (start) [label=right:{\mbox{\footnotesize $(X,Z,\cB) = (X_{af},Z_{af},\cB_{af})$}}]                  {};
	\node[transitionL] (iprime)  [below of=start, node distance=0.65\ndL] {InterestPrimes};
	\node[placeL]      (pla)   [right of=iprime, node distance=1.15\ndL, label=above:{\mbox{\footnotesize $(\ldots,\xch{\fP})$}}]  {a};
	\node[transitionL] (max)  [right of=pla]   {MaxRefOrd};
	\node[placeL]      (plb)   [right of=max, node distance=1.5\ndL, label=above:{\mbox{\footnotesize $(\ldots,\maxNu(X),\MaxNu(X))$}}]  {b};
	\node[transitionL] (inter) [right of=plb, node distance=1.5\ndL]   {IntersecExc};
	\node[placeL]      (plc)   [below of=inter, node distance=0.70\ndL, label=below:{\mbox{\footnotesize $(\ldots,\OmaxNu(X),\OMaxNu(X))$}}]  {c};
	\node[transitionL] (labell)  [left of=plc, node distance=1.5\ndL] {Labeling};
	\node[placeL]      (pld)   [left of=labell, label=below:{\mbox{\footnotesize $(\ldots,Y)$}}]  {d};
	\node[transitionL] (centY)  [left of=pld] {CenterY};
	\node[endplaceL]   (end)   [below of=centY, node distance=0.6\ndL,label=left:{\mbox{\footnotesize $(\ldots, C)$}}]   {};

	\path[->]
	(start) edge (iprime)
	(iprime)  edge (pla)
	(pla)   edge (max)
	(max)  edge (plb)
	(plb)   edge (inter)
	(inter) edge (plc)
	(plc)   edge (labell)
	(labell)  edge (pld)
	(pld) edge (centY)
	(centY) edge (end)
	;  
\end{petrinet}
\caption{\xch{Petri net for {\tt FindCenter.}}}
\label{Fig:FindCenter}
\end{figure}

\noindent 
Let us point out that, in the Petri net, $ Y $ denotes the locus of those irreducible components in $ \OMaxNu(X) $  that are of lowest label.

The transitions {\tt IntersecExc} and {\tt Labeling} are implemented in the
back-end {\sc Singular} and hence seen as atomic in this context. The purpose
of {\tt CenterY} is the choice of a center to resolve $Y$, which is of 
dimension at most $\dim(X) -1$. In a general setting, this is
a recursive call to the resolution. But in the special case of 
$\dim(X)=2$, this only handles at most a finite number of 
singular points of $Y$ and we consider it as simple call to {\sc Singular}.
Note that we will get $ C = Y $ at some step of the algorithm, i.e., that the entire locus $ Y $ is the next center.

This leaves two important building blocks to discuss: {\tt InterestPrimes}
and {\tt MaxRefOrd}, both of which exhibit a naturally parallel structure. 
\xch{In Figure~\ref{Fig:IntPrimes}, we} first discuss the Petri net of {\tt InterestPrimes} which corresponds to 
the algorithm InterestingPrimes of the previous section.

\begin{figure} 
\label{xch:Petri}
\begin{petrinet}	
	\node[startplaceL] (start) [label=right:{\mbox{\footnotesize $(X,Z,\cB) = (X_{af},Z_{af},\cB_{af})$}}]                 {};
	\node[transitionL] (capint)  [below of=start, node distance=0.65\ndL] {$ \langle m \rangle = I_Z\cap \ZZ  $ };
	\node[placeL]      (pla) [right of=capint, node distance=1.1\ndL,label=above:{\mbox{\footnotesize $(\ldots,M, m, \xch{\fP})$}}]  {a};
	\node[transitionL] (fac)  [below of=pla, node distance=0.85\ndL] {Factor};
	\node[placeL]      (pld) [below of=fac, node distance=0.75\ndL,label=below:{}]  {d};
	\node[transitionL] (coll)  [right of=pld, node distance=1\ndL] {\parbox{32pt}{Glue \&\\ Collect}};
	\node[placeL]      (ple)   [right of=coll, node distance=1.7\ndL,label=below:{}]  {e};
	\node[transitionL] (heur)  [right of=ple, node distance=1.5\ndL] {Heureka};
	\node[endplaceL]   (end)   [below of=heur, node distance=0.6\ndL, label=left:{\mbox{\footnotesize $ (X_{af},Z_{af},\cB_{af}, \xch{\fP_{af}})$}}]   {};
	
	\node[transitionL]   (split)   [right of=pla, node distance=1.5\ndL]   {Split};
	\node[placeL]   (plb)   [right of=split, node distance=1.8\ndL, label=above:{\mbox{\footnotesize $(\ldots, I_{temp},I_{int})$}}]   {b};
	\node[transitionL]   (der)   [below of=plb, node distance=0.85\ndL]   {FormalDer};
	\node[transitionL] (drop)  [below of=split, node distance=0.85\ndL] {Drop $ I_{temp},I_{int} $};

	\path[->]
	(start) edge (capint)
	(capint)  edge (pla)
	(pla) edge node [left] {\footnotesize $ m \neq 0 $}  (fac)
	(fac) edge (pld)
	(pld) edge (coll)
	(ple)  edge [bend left=20]  node [below] {\footnotesize $X$ not covered} (coll)
	(coll) edge [bend left=20] (ple)
	(ple) edge  node [above] {\footnotesize $X$ covered} (heur)
	(heur) edge (end)
	(pla) edge  node [above] {\footnotesize $ m = 0 $} (split)
	(split) edge node [above] {\mbox{\footnotesize many}}  (plb)
	(plb) edge [bend left=20] node [right] {\footnotesize $ m = 0 $} (der)
	(der) edge [bend left=20] (plb)
	(plb) edge node [left] {\footnotesize $ m \neq 0 $} (drop)
	(drop) edge (pla)
	;
\end{petrinet}
\caption{\xch{Petri net for {\tt InterestingPrimes.}}}
\label{Fig:IntPrimes}
\end{figure}

Note that for consistency of coloring at place a, the matrix $ M $ is initialized to the zero matrix in the very first transition of the net. 
\xch{Further, we add the empty list $ \fP$, in which we will collect the interesting primes.}
\xch{In {\tt Split}, we fix generators for $ I_Z $ and add the prime factors of the appearing coefficients to the list $ \fP $. Following this, {\tt Split} creates}
the covering by affine charts arising from the minors of the Jacobian matrix
of $I_Z$, contains a call to {\sc Singular}  and fills in a correct value for 
the matrix $M$ for each of the generated tokens. The iteration of purely 
formally forming the derivatives is represented by {\tt FormalDer}, 
which relies on functionality of the back-end {\sc Singular} again. Having 
found an integer $m \neq0$, we need to find the prime factors, combine the
knowledge from all charts and return the result. 

\smallskip 

The next transition in the Petri net of {\tt FindCenter} is the computation 
of the locus of maximal refined order {\tt MaxRefOrd}, which is represented by
the Petri net \xch{in Figure~\ref{Fig:MaxRefOrd}}. In reality, the output place of {\tt InterestPrimes} and
the input place of {\tt MaxRefOrd} are identical; for presentation as 
separate Petri nets, we therefore start {\tt MaxRefOrd} by a transition
which actually only places its input token into its output place a. 

As the first entry of the desingularization invariant represents the number 
of descents upon finding maximal order one, the main part of this Petri net
is the loop which computes the maximal order locus and then passes to the
descent in ambient dimension, if the maximal order was $1$. If the maximal
order exceeded $1$, the transition {\tt Glue\&Collect} fires instead, compares
the maximal order to the current maximal value, identifies the current maximal 
order and places the locus of this order (as computed up to this point) into
the place c. Note that the zero ideal has order $\infty$, which also triggers
{\tt Glue\&Collect}, but the transition itself handles this special case 
appropriately, as it first compares the first entry of the invariant.

\begin{figure} 
\begin{petrinet}
	\node[startplaceL] (start) [label=right:{\mbox{\footnotesize $(X,Z,\cB, \xch{\fP}) = (X_{af},Z_{af},\cB_{af}, \xch{\fP}_{af})$}}]                  {};
	\node[transitionL] (down)  [below of=start, node distance=0.75\ndL] {$ \downarrow $};
	\node[placeL]      (pla) [below of=down, node distance=1\ndL, label=below:{\mbox{\footnotesize $(X_{af},Z_{af})$}}]  {a};
	\node[transitionL] (MOA)  [right of=down, node distance=1.25\ndL] {MaxOrdArithm};
	\node[placeL]      (plb) [right of=MOA, node distance=1.75\ndL,label=above:{\mbox{\footnotesize $(\ldots,\maxOrd(X), \MaxOrd(X))$}}]  {b};
	\node[transitionL] (descent)  [right of=pla, node distance=1.6\ndL] {Descent};
	\node[transitionL] (coll)  [right of=plb, node distance=2.15\ndL] {\parbox{32pt}{Glue \&\\ Collect}};
	\node[placeL]      (plc)   [below of=coll, node distance=1\ndL]  {c};
	\node[transitionL] (heur)  [left of=plc, node distance=1.45\ndL] {Heureka};
	\node[endplaceL]   (end)   [below of=heur, node distance=0.75\ndL,label=left:{\mbox{\footnotesize $(X_{af},Z_{af},\cB_{af}, \xch{\fP}_{af}, \maxNu(X_{af}),\MaxNu(X_{af}))$}}]    {};

	\path[->]
	(start) edge (down)
	(down) edge (pla)
	(pla)  edge (MOA)
	(MOA) edge (plb)
	(plb) edge node [right] {\footnotesize $\maxOrd(X) = 1 $} (descent)
	(descent)  edge node [above] {\footnotesize many} (pla)
	(plb) edge	node [above] {\footnotesize $\maxOrd(X) > 1 $} (coll)
	(plc)  edge [bend left=20]  node [left] {\footnotesize $X$ not covered} (coll)
	(coll) edge [bend left=20] (plc)
	(plc) edge  node [above] {\footnotesize $X$ covered} (heur)
	(heur) edge (end)
	;
\end{petrinet}
\caption{\xch{Petri net for {\tt MaxRefOrd.}}}
\label{Fig:MaxRefOrd}
\end{figure}

In this Petri net, we are still hiding another key step, the calculation of
the locus of maximal order in the arithmetic case, i.e., {\tt MaxOrdArith}.
This is the last of the Petri nets we are presenting here \xch{in Figure~\ref{Fig:MaxOrdArith}} and starts by
calling {\tt MaxOrd0}, already available functionality to compute the locus 
of maximal order in characteristic zero.

In the absence of interesting primes, this already terminates the algorithm.
Otherwise, the stalk at each interesting prime needs to be considered 
separately giving rise to at least $\#\xch{\fP}$ tokens at place b for each of 
which the prime $p$ has already been replaced by a symbol $P$. Note that at
each of the stalks a covering by minors of the Jacobian matrix may become
necessary. The computation of Hasse{-Schmidt} 
deriviatives in {\tt HasseDer} is a task for the back-end {\sc Singular}, as
are the computations after resubstitution of $P$ by $p$.

In the transition {\tt Glue\&Collect}, the computed maximal order of
the token from place d is compared to the maximal order assigned to the
token from place e (technically, we again need to make sure that place e holds
an initialization token corresponding to the empty set at the beginning,
but this is not shown here). According to the result of the comparison, 
the one of lower value is dropped or the union of the computed loci is 
formed in case of equality. This comprises a gluing step for charts by
maximal minors and the collecting of the stalks at primes.
At the same time, the prime $p$ is discarded from $\xch{\fP}$ allowing us
to leave the loop as soon as $\xch{\fP}$ is empty. The transition {\tt Drop} then
just drops unnecessary parts of the coloring to reach conformity with the
coloring of tokens at place a. 

\smallskip 

\begin{figure} 
\begin{petrinet}
	\node[startplaceL] (start) [label=right:{\mbox{\footnotesize $(X,Z,\cB, \xch{\fP}) = (X_{af},Z_{af},\cB_{af}, \xch{\fP}_{af})$}}]                  {};
	\node[transitionL] (maxnull)  [below of=start, node distance=0.7\ndL] {MaxOrd0};
	\node[placeL]      (pla) [below of=maxnull, node distance=0.7\ndL]  {a};
	\node[invisible]   (invi)  [below of=pla, node distance=0.3\ndL, label=right:{\mbox{\footnotesize $ \ (\ldots, maxord, MaxOrd)$}}] {};
	\node[transitionL] (heur)  [right of=pla, node distance=2\ndL] {Clean-Up};
	\node[endplaceL]   (end)   [below of=heur, node distance=0.7\ndL,label=below:{\mbox{\footnotesize $ (X_{af},Z_{af},\cB_{af}, \maxOrd(X_{af}),\MaxOrd(X_{af}) )$}}]   {};
	\node[transitionL] (split1)  [left of=pla, node distance=1.2\ndL] {\parbox{34pt}{Split \&\\ Replace}};
	\node[placeL]      (plb)   [below of=split1, node distance=\ndL,label=left:{\mbox{\footnotesize $(\ldots,p,J_X,J_Z,M)$}}]  {b};
	\node[transitionL] (hasse)  [below of=plb, node distance=0.85\ndL] {HasseDer};
	\node[placeL]      (plc)   [below of=hasse, node distance=0.85\ndL,label=left:{\mbox{\footnotesize $(\ldots,DiffList, m)$}}]   {c};
	\node[transitionL] (resub)  [below of=plc, node distance=0.85\ndL] {\parbox{56pt}{Resubstitute \\ \& Eliminate}};
	\node[placeL]      (pld)   [right of=resub, label=right:{\mbox{\footnotesize $(\ldots, maxord^{new}, MaxOrd^{new}, \ldots )$}}, node distance=1.2\ndL]  {d};
	\node[transitionL] (coll)  [above of=pld, node distance=0.85\ndL] {\parbox{34pt}{Glue \& \\ Collect}};
	\node[placeL]      (ple)   [above of=coll, node distance=0.85\ndL]  {e};
	\node[transitionL] (drop)  [above of=ple, node distance=0.9\ndL] {Drop};

	\path[->]
	(start) edge (maxnull)
	(maxnull) edge (pla)
	(pla) edge node [above] {\footnotesize $ \xch{\fP} \neq \emptyset $} (split1)
	(pla) edge node [above] {\footnotesize $ \xch{\fP} = \emptyset $} (heur)
	(heur) edge (end)
	(split1) edge node [left] {\footnotesize many}  (plb)
	(plb) edge (hasse)
	(hasse) edge (plc)
	(plc) edge (resub)
	(resub) edge (pld)
	(pld) edge (coll)
	(ple)  edge [bend left=20] node [right] {\footnotesize $\xch{\fP} \neq \emptyset \,|| \, X$ not covered} (coll)
	(coll) edge [bend left=20] (ple)
	(ple) edge node [right] {\footnotesize $\xch{\fP} = \emptyset \,\&\& \,X $ covered}  (drop)
	(drop) edge (pla)
	;
\end{petrinet}
\caption{\xch{Petri net for {\tt MaxOrdArith.}}}
\label{Fig:MaxOrdArith}
\end{figure}

\label{xch:auxiliary}

\section{Outlook}

\subsection{Normalization in the arithmetic case}

Up to this point, all considerations in the article have followed the 
general structure of Hironaka style desingularization, i.e., using suitably 
chosen smooth centers for resolving by blowing up. There is another classical
approach to resolution of singularities for surfaces (even in 
mixed characteristic): Lipman introduced a desingularization process in 
\cite{Lip}, which \xch{alternately applies the} normalization of the surface followed by \xch{blow ups of closed points.}

Over a field, normalization can be achieved algorithmically following the 
ideas of Grauert (see \cite{dJ}), for which state-of-the-art algorithms and 
implementations are available (see for instance \cite{GLS}, \cite{Betal}).
Crucial to all of \xch{this} is the use of the ideal of the singular locus as 
an approximation of the non-normal locus. 
In the arithmetic case, the same theoretical ideas still hold, but on the
practical side the approximation of the non-normal locus causes a problem:
if it is done by means of the vanishing of the minors of the Jacobian matrix, 
it is by far too coarse, as it does not only detect the singular points, but 
also fiber singularities.
Recall the first lines of Example 
\ref{ExFibreSing} for an illustration of the difference between singular point
and singularity of the fiber. 
\xch{The approximation of the 
non-normal locus hence needs to be done using the non-regular locus.}

\label{xch:Anne_to_do_non-normal}
\xch{Recalling the considerations in Observation \ref{Obs:d_x} and the subsequent
definition of $\nu_{\sf ref}$, we know that a scheme $X$ is non-regular at a point
$x$, if and only if $d_x$, the second entry of $\nu_{\sf ref}$, exceeds $1$. 
However, this is not as directly computable as it seems. We need to determine
the non-regular locus as
\[ 
	\Sigma := \bigcup_{i=1}^{N} {\mathcal V}_{(N-i,\geq 2)}(X),
\]
where ${\mathcal V}_{(N-i,\geq 2)}(X)$ describes the locus on $X$ where the invariant
has first value $N-i$ and second value at least $2$. For $i=N$, the locus 
${\mathcal V}_{(0,\geq 2)}(X)$ is precisely the locus of order at least $2$ on $X$, which is
computable as the vanishing locus of the second entry of the output list of
the Algorithm \ref{HasseDer} by Remark \ref{Rk:be_careful_diff_op}(3). Suitably
covering the complement of this locus by principal open sets and moving one 
generator of order $1$ of $I_X$ into $I_Z$ on each open set, as is discussed 
in more detail in the characteristic zero case in \cite{BDF}, we can then
compute the locus ${\mathcal V}_{(1,\geq 2)}(X)$ on each of the open sets and so on. This 
provides a parallel algorithm to determine the non-regular locus. }

\subsection{Schemes of higher dimension} 
As it is already mentioned in \cite{CJS} Remark \xch{6.29}, 
the formulation of the algorithm (see Construction \ref{CJS-Constr}) allows an 
{\em arbitrary} dimensional scheme $ X $ instead of a two-dimensional one.
But termination of the algorithm is not known in higher dimensions
(even in the situation over a field of characteristic zero),
neither for the original version nor for our variant.

While the newly created components in the locus of maximal singularity
(i.e., those with label 1 or larger) are well-behaved in dimension two,
there is less control in higher dimensions, 
see \cite{InvDim2} Example 2.5.
Furthermore, one has to be more careful when dealing with the labels of
irreducible components, see \cite{InvDim2} Example 2.6.
Since the mentioned examples are hypersurfaces they apply for both variants of the algorithm.

In the papers studying the algorithm
(\cite{HiroBowdoin}, \cite{CosToho}, \cite{CJS}, \cite{InvDim2}) 
Hironaka's characteristic polyhedron \cite{HiroCharPoly} 
plays a crucial role as a measure for the improvement of 
the singularity during the resolution process.
Another difficulty in higher dimension is to obtain a similar control on the characteristic polyhedron or to give an alternative invariant measuring the improvement.

Therefore an implementation of the algorithm could be extremely useful
for investigations in this direction.
In particular, it will be easier to test numerous examples on termination
of the algorithm, to search for patterns in order to develop new invariants,
and to explore the behavior of potential invariants.

\smallskip

Finally, one may think that choosing always the oldest irreducible
components seems quite arbitrary.
In fact, as the following example shows, we cannot modify the algorithm 
by considering always the newest irreducible component.

\begin{Ex}
	\label{Ex:loop}
	Let $ k $ be a field of characteristic zero and $ \cB= \emptyset $.
	Let $ X = V(f) \subset \mathbb{A}^5_k $ be the affine 
	variety given by the polynomial
	\[
	f = t^5 + x^4 y^2 z^4 + w^{10} z^6.
	\]
	We have $ \maxNu(X) =(5, 5 ) $ and 
	$ 
	\OMaxNu(X) =  V(t,x,z) \cup V(t,y,z) \cup V(t,x,y,w) .
	$ 
	All components get label $ 0 $ and the algorithm blows up
	with center the origin $ D_0 := V(t,x,y,z,w) $.
	
	In the affine chart with coordinates $ (t',x',y',z',w') = (\frac{t}{z}, \frac{x}{z}, \frac{y}{z}, z, \frac{w}{z}) $ the strict transform $ X' $ of $ X $ is given by 
	(for simplicity, we abuse notation and call the coordinates again $(t,x,y,z,w) $)
	\[
	f' = t^5 + x^4 y^2 z^5 + w^{10} z^{11}.
	\]
	We have $ \maxNu(X') = (5,5) $ and 
	$ 
	\OMaxNu(X') = V (t,z) \cup V(t,x,y,w).
	$
	Since $ V(t,z) $ is contained in the exceptional divisor
	and the center is not an entire irreducible component of $ \OMaxNu(X) $, the component $ V(t,z) $ gets the label $ 1 $.
	
	{\em Suppose we modify Construction \ref{CJS-Constr} such that
		we always consider the irreducible components with highest label.} 
	Then the center of the next blowing up is $ D_1 := V(t,z) $.
	
	In the affine chart with coordinates $ (t',x',y',z',w') = (\frac{t}{z}, x , y, z, w) $, we obtain that the strict transform $ X'' $ is given by
	\[
	f'' = t^5 + x^4 y^2 + w^{10} z^{6}.
	\]
	Further, $ \maxNu(X'') = (5,5) $, and 
	$ 
	\OMaxNu(X') = V (t,x,y,z) \cup V(t,x,y,w),
	$
	where $  V (t,x,y,z) $ has label $ 2 $ and $ V(t,x,y,w) $ has label $ 0 $.
	The modified algorithm chooses $ D_2 :=  V (t,x,y,z) $ as the next center.
	
	In the affine chart with coordinates  $ (t',x',y',z',w') = (\frac{t}{z}, \frac{x}{z}, \frac{y}{z}, z, w) $ the strict transform $ X^{(3)} $ is given by 
	\[
	f^{(3)} = t^5 + x^4 y^2 z + w^{10} z.
	\]
	Again, $ \maxNu(X^{(3)}) = (5,5) $, and we have
	$ 
	\OMaxNu(X^{(3)}) = V (t,x,z,w) \cup V(t,x,y,w),
	$
	where the component $  V (t,x,z,w) $ gets label $ 3 $ and $ V(t,x,y,w) $ has label $ 0 $.
	Hence $ D_3 :=  V (t,x,z,w) $ is the center of the next blowing up.
	
	In the affine chart with coordinates  $ (t',x',y',z',w') = (\frac{t}{z}, \frac{x}{z}, y, z, \frac{w}{z}) $ the strict transform $ X^{(4)} $ is given by 
	\[
	f^{(4)} = t^5 + x^4 y^2  + w^{10} z^6.
	\] 
	But $ f^{(4)} = f'' $. 
	Thus we have created a loop and the modified algorithm never ends.
\end{Ex}

\end{document}